\newtheorem{theorem}{Theorem}[section]
\newtheorem{cor}[theorem]{Corollary}
\newtheorem{lem}[theorem]{Lemma}
\newtheorem{pro}[theorem]{Proposition}
\DeclareMathOperator{\Real}{Re}
\DeclareMathOperator{\dist}{dist}
\numberwithin{equation}{section}
\newtheorem{remark}[theorem]{Remark}
\begin{document}
\title{\vspace{-1.2cm} \bf Uniform estimates for the canonical solution to the $\bar\partial$-equation on product domains  \rm}
\author{Robert Xin Dong  \, \, \quad Yifei Pan   \quad \, \,Yuan Zhang}
\date{}

\maketitle

\begin{abstract}
We obtain uniform estimates for the canonical solution to $\bar\partial u=f$ on the Cartesian product of  bounded planar domains with $C^2$ boundaries, when $f$ is continuous up to the boundary.
This generalizes Landucci's result for the bidisc toward higher dimensional product domains. In particular, it answers an open question of  Kerzman for continuous datum.
\end{abstract}

\renewcommand{\thefootnote}{\fnsymbol{footnote}}
\footnotetext{\hspace*{-7mm} 
\begin{tabular}{@{}r@{}p{16.5cm}@{}}
& Keywords. $\bar{\partial}$-equation, product domain, canonical solution, Green's function\\
& Mathematics Subject Classification. Primary 32A25; Secondary 32A26, 32W05\\

\end{tabular}}

\section{Introduction}

The purpose of this paper is to study uniform estimates for the canonical solution to the $\bar \partial$-equation on the Cartesian product of  bounded planar domains with $C^2$ boundaries. When the data are continuous, we prove the following theorem.

\begin{theorem}\label{main}
Let $\Omega:=D_1\times \cdots  \times D_n \subset \mathbb C^n$, $n\geq 2$, where each $D_j$ is a bounded planar domain with $C^2$ boundary. Then there exists a positive constant $C$ depending only on $\Omega$ such that for any $\bar\partial$-closed $(0, 1)$ form $ f$ continuous up to $\bar\Omega$,  the canonical solution  to  $\bar\partial u = f$ (in the sense of distributions) is continuous on $\Omega$ and satisfies 
$$
\|  u   \|_{ L^\infty(\Omega) }\le C\|  f\|_{ L^\infty(\Omega) }.
$$
\end{theorem}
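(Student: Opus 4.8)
The plan is to reduce matters to the one-variable equation on each factor $D_j$ and then recombine the pieces using the product structure of $\Omega$.

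\textbf{Step 1: the planar building block.} Let $D$ be a bounded planar domain with $C^2$ boundary and let $G_D$ be its Green's function, normalized to have a $\log|z-w|$ singularity and to vanish on $\partial D$. Then the canonical solution of $\partial u/\partial\bar z=g$ is
$$ T_D g(z)=\frac{2}{\pi}\int_D \partial_z G_D(z,w)\,g(w)\,dA(w), $$
since $\partial_{\bar z}\bigl(\tfrac2\pi\partial_z G_D(z,w)\bigr)=\delta(z-w)$ and the resulting function is orthogonal to $A^2(D)$ because $G_D$ vanishes on $\partial D$. The crucial point is the uniform bound $\sup_{z\in D}\int_D|\partial_z G_D(z,w)|\,dA(w)\le C_D$: the kernel has an integrable $|z-w|^{-1}$ singularity on the diagonal, and its behaviour near $\partial D$ is controlled by the $C^2$-regularity — by the Kellogg--Warschawski theorem the Riemann map of $D$ onto the unit disc extends to a $C^{1,\alpha}$ diffeomorphism of the closures, reducing the estimate to the explicit disc kernel $\tfrac1\pi\bigl(\tfrac1{z-w}+\tfrac{\bar w}{1-\bar w z}\bigr)$. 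Consequently $T_D$ is bounded on $L^\infty(D)$ and maps $C(\bar D)$ into $C(\bar D)$, with norm depending only on $D$.

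\textbf{Step 2: an explicit formula for the canonical solution.} Write $f=\sum_{j=1}^n f_j\,d\bar z_j$; the hypothesis $\bar\partial f=0$ means $\partial_{\bar z_j}f_k=\partial_{\bar z_k}f_j$ in $\mathcal D'(\Omega)$. Solving $\bar\partial$ one variable at a time — apply $T^{z_1}_{D_1}$, the operator of Step 1 acting in the $z_1$-slot, to cancel the $d\bar z_1$-component, then $T^{z_2}_{D_2}$ to cancel the $d\bar z_2$-component of the new form, and so on — and using the identity $T^{z_k}_{D_k}(\partial_{\bar z_k}h)=h-P^{z_k}_{D_k}h$ (with $P^{z_k}_{D_k}$ the Bergman projection in the $z_k$-variable) together with the closedness relations, one checks inductively that after $j$ steps the residual is $\sum_{k>j}\bigl(P^{z_1}_{D_1}\cdots P^{z_j}_{D_j}f_k\bigr)\,d\bar z_k$, whose coefficients are holomorphic in $z_1,\dots,z_j$. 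This produces the solution
$$ u=\sum_{j=1}^n u_j, \qquad u_j:=T^{z_j}_{D_j}\bigl(P^{z_1}_{D_1}\cdots P^{z_{j-1}}_{D_{j-1}}f_j\bigr). $$
For a.e.\ fixing of the remaining variables each $u_j$ is orthogonal in $z_j$ to $A^2(D_j)$, while every $h\in A^2(\Omega)$ restricts for a.e.\ such fixing to an element of $A^2(D_j)$; Fubini then gives $u\perp A^2(\Omega)$, so $u$ is exactly the canonical solution, and the integral representation shows that it is continuous on $\Omega$.

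\textbf{Step 3: the $L^\infty$ estimate.} For $j=1$ one has $\|u_1\|_{L^\infty(\Omega)}=\|T^{z_1}_{D_1}f_1\|_{L^\infty}\le C\|f_1\|_{L^\infty}$ by Step 1, so the whole difficulty lies in the mixed terms $u_j$, $j\ge2$. Here a term-by-term estimate is hopeless and a naive induction on $n$ fails, because the Bergman projections $P^{z_k}_{D_k}$ are not bounded on $L^\infty$ — already on the disc $P\bigl(C(\overline{\mathbb D})\bigr)\not\subset L^\infty$, so the reduced data $P^{z_1}_{D_1}\cdots f_k$ need not even be bounded. One must instead extract the cancellation forced by $\bar\partial f=0$. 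The mechanism is: write each Bergman kernel as $K_{D_k}(z_k,w_k)=\partial_{\bar w_k}\Phi_{D_k}(z_k,w_k)+\delta(z_k-w_k)$ with $\Phi_{D_k}:=\tfrac2\pi\partial_{z_k}G_{D_k}$ (the very kernel of $T_{D_k}$, which vanishes for $w_k\in\partial D_k$); integrate by parts in $w_k$, so that no boundary term survives and the derivative lands on $f_j$; rewrite $\partial_{\bar w_k}f_j$ as $\partial_{\bar w_j}f_k$ using the closedness relation; and iterate over $k=1,\dots,j-1$, the aim being to express $u_j$ as a finite combination of iterated operators built only from the $L^1$-bounded kernels $\partial G_{D_\bullet}$, for which Step 1 would give $\|u\|_{L^\infty(\Omega)}\le C\|f\|_{L^\infty(\Omega)}$.

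\textbf{The main obstacle.} What I expect to be the real difficulty is that this integration-by-parts scheme is not self-closing: each application of $P^{z_k}_{D_k}$ inevitably leaves behind an \emph{undifferentiated} term on which the identity $T^{z_k}_{D_k}(\partial_{\bar z_k}h)=h-P^{z_k}_{D_k}h$ re-creates a Bergman projection, so the formal bookkeeping merely moves the difficulty from one variable to another. Turning it into an actual bound requires a direct pointwise analysis of the iterated kernels of the $u_j$'s that uses the relation $\partial_{\bar w_k}f_j=\partial_{\bar w_j}f_k$ in an essential — not merely algebraic — way, for instance by first regularizing $f$ by smooth $\bar\partial$-closed forms, establishing the estimate with a constant independent of the regularization, and then passing to the limit. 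This in turn rests on sharp size and cancellation estimates, up to the boundary, for $G_{D_j}$ and its first and second derivatives on $C^2$ domains; the sharp planar Green's-function estimate underpinning the argument is isolated and proved in the appendix.
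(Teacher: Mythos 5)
Steps 1 and 2 of your proposal are correct and, after unwinding, coincide with the paper's setup. Your planar operator $T_D g(z)=\frac{2}{\pi}\int_D \partial_z G_D(z,w)\,g(w)\,dA(w)$ is, up to normalization, the paper's $\mathbf T$ with kernel $S=L-H$ from Section~2, and your telescoping representation $u=\sum_{j}T_j^{z_j}\bigl(P_1^{z_1}\cdots P_{j-1}^{z_{j-1}}f_j\bigr)$ is the paper's formula~\eqref{formula1} in disguise: for $n=2$ one has $T_2P_1f_2=T_2f_2-T_1T_2(\mathcal D f)$ using $P_1=I-T_1\bar\partial_1$, $\bar\partial$-closedness, and commutativity of slice operators, and the same telescoping works inductively. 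Your verification that this $u$ solves $\bar\partial u=f$ and is orthogonal to $A^2(\Omega)$ (for data regular enough that the iterated operators are defined) is also sound.

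The genuine gap is in Step~3, which you yourself flag as unresolved. The missing idea is a specific partition of the product kernel that makes the integration-by-parts scheme self-closing. Setting $H:=\sum_{j=1}^{k}\prod_{m\neq j}|w_m-z_m|^2$, the paper splits $\prod_{l}S_l=\sum_j H^{-1}\prod_l S_l\cdot\prod_{m\neq j}|w_m-z_m|^2$, pairs the $j$-th summand with the representative $\partial^{k-1}f_j/\partial\bar z_1\cdots\widehat{\partial\bar z_j}\cdots\partial\bar z_k$ of the mixed derivative $\mathcal D^{k-1}\mathbf f$, and only then applies Stokes' theorem in all variables except $z_j$. The weight $\prod_{m\neq j}|w_m-z_m|^2/H$, together with the boundary-improved kernel estimate in~\eqref{S} and the derivative bounds of Section~3, supplies precisely the extra decay needed for every iterated kernel to stay integrable and for the interior $\epsilon$-boundary terms to vanish (Lemma~\ref{ap}, Proposition~\ref{ee}). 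Without this weight, as you observe, each integration by parts merely recreates a Bergman projection in another slot and the estimate never closes; with it, the estimate closes and one obtains Theorem~\ref{C1}. Two further points you leave open: your Step~1 bound via Kellogg--Warschawski implicitly assumes $D$ simply connected, whereas the factors are merely bounded $C^2$ planar domains (hence the paper's reliance on Kerzman's direct Green's-function estimates, Lemma~\ref{Kerzman-Green}); and dropping the regularity from $C^{n-1}$ to continuous data is not a routine mollification, since~\eqref{formula1} carries $n-1$ derivatives of $f$ --- the paper must first pass to a derivative-free operator $\tilde{\mathbf T}$ (Section~5) and then establish $L^2$-minimality by exhausting $\Omega$ from inside, which in turn relies on the stability of the planar canonical kernel and its gradient under domain perturbation (Proposition~\ref{DD}).
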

  
The canonical solution is the unique one that is perpendicular to the kernel of $\bar \partial$, and is sometimes referred to as the $L^2$-minimal solution because it has minimal $L^2$-norm among others. 
The canonical solution exists on any bounded pseudoconvex domain in $\mathbb C^n$ whenever the datum is in $L^2$; 
however, uniform estimates for the solutions
do not exist in general.
Indeed,  Sibony \cite{Sib80}  constructed an example of a $\bar \partial$-closed $(0, 1)$-form $f$ continuous on the closure
of a weakly pseudoconvex domain in   $\mathbb{C}^3$, which has no bounded solution to $\bar \partial u = f$. See also Berndtsson \cite{Be93} for a  counterexample in $\mathbb C^2$. 
More strikingly, Forn\ae ss and Sibony \cite{FS93} constructed a smoothly bounded pseudoconvex domain in $\mathbb{C}^2$ whose boundary is strictly pseudoconvex except at one point, but uniform estimates for the solutions fail to hold.

\medskip

On the other hand, on smoothly bounded strictly pseudoconvex domains  in $\mathbb{C}^n$, Henkin \cite{Hen70} and Grauert and Lieb \cite{G-L70} constructed integral solutions that satisfy the uniform estimates $\|u\|_{{\infty}} \leq C\|f\|_{\infty}$ for $C^{\infty}$ forms $f$.
Kerzman  \cite{Ker71} and later Henkin and Romanov  \cite{HR} extended their results and obtained the $\nicefrac{1} {2}$-H\"{o}lder estimates. See also the works of Lieb and Range \cite{LR}, Range \cite{R78, R86, R90, R20}, Range and Siu \cite{R-S72, R-S73}, as well as \cite{Ker73, K76, LTL93, Be97, Be01, CS11, SV14, GSS19, DLT}, and the references therein.
\medskip

The unit polydisc $\mathbb D^n:=\mathbb D(0,1)^n$ is pseudoconvex with non-smooth boundary. Kerzman in \cite[p. 311-312]{Ker71} proposed an open question that remains unsolved after half a century -- 
{  \it do uniform estimates for $\bar\partial$ hold on $\mathbb{D}^n, n \geq 2$, when the datum  is only assumed to be bounded} ?
Henkin in \cite{Hen71} (see also \cite{F-L-Zh11}) showed that there exist integral solutions with uniform estimates when the  data are $ C^1$ up to $\overline{\mathbb D^2}$; Landucci in \cite{L75} and Bertrams in \cite{B} extended Henkin's result  and obtained  bounded canonical solutions on polydiscs.
  Making use of conformal mappings, these uniform estimates with $C^1$ data are passed onto Cartesian products of  simply connected planar domains with sufficiently smooth boundaries. 
For general product domains, 
the uniform estimates were    
 proved by Fassina and the second author \cite{FP19} when the data are smooth up to the boundaries.    The additional data regularity assumption there is essentially due to the presence of  higher order derivatives of the data in the solution representation in the case of $n\ge 2$.  The solution  in \cite{FP19}  is not canonical.

\medskip
   
Theorem \ref{main} lowers the data regularity assumption to continuity by studying weak solutions, and  generalizes Landucci's canonical solution result \cite{L75} from the bidisc toward higher dimensional product of general bounded planar domains with $C^2$ boundaries. Because of the absence of the explicit Bergman kernel formula for general planar domains, our canonical solution  is not obtained by taking the Bergman projection of a certain solution, but rather is represented in terms  of Green's function along each slice of the product domain.    For differentiable data, the solution operator contains the derivatives of the data  up to order $n-1$ and takes similar forms as in the inspiring works \cite{Hen71, L75,  FP19}.  To achieve uniform estimates, one needs to eliminate  those derivatives making use of the $\bar\partial$-closeness of the data. The crucial ingredients in our proof are the results of Barletta and Landucci \cite{BL91} and Kerzman \cite{Ker76} regarding planar domains, in addition to a refined decomposition of the canonical solution motivated by \cite{L75, FP19}. For 
continuous data, we weaken the regularity assumption by introducing a new canonical integral formula which involves no derivatives of the data. Such type of formula was initially proposed in \cite{FP19} as a non-canonical solution candidate, and we secure such a proposal in Section 5. By a stability result on the canonical solution kernel,  we show for continuous data that our  formula gives a bounded solution that is  also canonical  by carefully exhausting the domain from inside.

\medskip

The estimate in Theorem \ref{main} is optimal in the sense that  the supnorm on the left hand side cannot be replaced by any H\"older norm. In \cite[p. 310-311]{Ker71}, Kerzman and Stein constructed an example of a $\bar\partial$-closed $(0, 1)$ form $f$ that is $L^\infty$ on the bidisc but the equation $\bar\partial u =  f$ admits no H\"older solution.

\medskip

The organization of this paper is as follows.  In Section 2, we summarize a proof of the  uniform estimates for the canonical solutions on planar domains. In  Section 3, we investigate upper bounds for the derivatives of one dimensional canonical solution kernels.
In Section 4, we prove uniform estimates for the canonical solutions on product domains of dimensions two and higher, when the data are differentiable. In Section 5,  the regularity assumption is weakened to continuity, which proves the main theorem completely, and yields Corollary \ref{cor} on uniform estimates for the Bergman projection. Lastly in Appendices \ref{AppA} and \ref {AppB}, we give proofs to a key formula of Barletta-Landucci \cite{BL91}, and upper bounds for Green's function together with its derivatives based on \cite{Ker76}.

 \medskip

After the initial version of this paper was circulated on ArXiv, we observed that our canonical integral formula can be further simplified due to the vanishing property of the solution kernel on the boundary (see Remark \ref{re}). This observation, combined with our main estimate \eqref{G11}, leads to the $L^p$ estimate of the canonical solution operator for $L^p$ data, $1\le p\le \infty$, with the $L^p$ bound independent of $p$. Yuan \cite{Yuan} developed this direction further. More recently, Li \cite{Li} established a different approach showing that Kerzman's question holds true for product domains with $C^{1, \alpha}$ slices, $0<\alpha<1$.

{
\paragraph{Acknowledgements} 

\setlength{\parskip}{-1.5ex}
  
\small
 
We are  grateful to Emil Straube for sharing the unpublished note of Kerzman.}

\section{One dimensional canonical solution to the $\bar{\partial}$-equation}
For a bounded domain $D$ in $ \mathbb C$ with $C^{2}$  boundary,  let $SH(D)$ be the set of subharmonic functions on $D$. The (negative) Green's function $g(z, w)$ with a pole at $w \in D$ is defined as
$$
g(z, w) = \sup \, \{ u(z):  u < 0,\, u \in SH(D),\, \limsup_{\zeta \to w} \, (u(\zeta) - \log |\zeta - w|) < \infty \}.
$$
Denote by $ G(z, w):\equiv - (2\pi)^{-1}  {g(z, w)}$  the positive Green's function. Let
$$
H(w, z):= \frac{1}{2\pi i (z-w)}
$$
stand for the universal Cauchy kernel on $D$. 

For any fixed $w \in D$ consider the Dirichlet problem: 
\begin{equation}\label{Dirichlet}
\begin{cases}
 \Delta L(z) = 0, & z \in D; \\
L(z)=H (w, z), & z \in bD.
\end{cases}
\end{equation}
Then by the Poisson formula one solves \eqref{Dirichlet} uniquely with a solution expressed as 
 \begin{equation} \label{L def}
L(w, z) \equiv L_w(z):= \frac{1}{\pi}\int_{ b D} \frac{1}{\zeta-w} \frac{\partial G(z, \zeta)}{\partial \zeta}d\zeta \left(= - \frac{1}{2\pi i}\int_{ b D} \frac{1}{\zeta-w} \frac{\partial G(z, \zeta)}{\partial \vec{n}_\zeta }ds_\zeta\right).
\end{equation}
According to standard elliptic PDEs theory, $L$  belongs to $ C^\infty( D)\cap  C^{1, \alpha}(\bar D)$ as a function of  $ z $ for any  $\alpha \in (0, 1)$. Here $\vec{n}$ is the unit outer normal vector and $ds$ is the arc length on $b D$.
Regarded as a function on $D\times D$, $L(w, z)$ is holomorphic with respect to all $w$ in $D$.

\vspace{0.15cm}

In \cite{BL91}, Barletta and Landucci rewrote the function $L$ by using Green's second identity as 
\begin{equation} \label{L=H+I}
L(w,z) = H(w, z)  - \frac{1}{\epsilon\pi i}\int_0^{2\pi}G(w+\epsilon e^{it}, z)e^{-it}dt, \quad z\neq w,
\end{equation}
where $\epsilon$ can be chosen as any small number such that 
\begin{equation}\label{epsilon}
    0< \epsilon \le\epsilon (w, z):=2^{-1}{\min} \left \{ {|z-w|} , {\delta(w)}  \right \}.
\end{equation}
Because (\ref{L=H+I}) plays a key role throughout the rest of the paper, we provide its proof for completeness in Appendix \ref{AppA}, which also corrects a minor error in  \cite{BL91}.
 \vspace{0.15cm}
 
For any (0, 1)-form $ f(z) d\bar z$ such that $f \in   L^1 ( D)$, define the operator $\mathbf T$ as
\begin{equation}\label{c1}
\mathbf Tf(w):=\int_D S(w, z) { f} (z)  d\bar z\wedge dz,
\end{equation}
where for any fixed $w\in D$, 
\begin{equation}\label{sl}
    S(w, z): = L(w, z) -H(w, z)\left( = -\frac{1}{\epsilon\pi i}\int_0^{2\pi}G(w+\epsilon e^{it}, z)e^{-it}dt\right). 
\end{equation} 
In particular, 
\begin{equation}\label{S0}
    S(w, z) =0, \ \ \ z\in bD. \ \ 
\end{equation}
The classical Cauchy integral theory
and the holomorphy of $L$ with respect to $w$ imply that 
$$
\frac{\partial  }{\partial  \bar w } (\mathbf T f(w)) =\frac{\partial  }{\partial  \bar w } \left (\frac{1}{2\pi i}\int_{ D}\frac{ { f} (z)}{z-w} dz \wedge d\bar z \right) = { f}(w)
$$
weakly, so $\mathbf T$ is also a solution operator. 
 
 \vspace{0.15cm}
 
In his unpublished note \cite{Ker76}, Kerzman obtained estimates for Green's function on general $C^2$-smooth bounded domains in $\mathbb R^n, n\geq 2$. 

\begin{lem} [Kerzman \cite{Ker76}] \label{Kerzman-Green}

Let $D$ be a  bounded domain with $C^2$ boundary in $\mathbb C$, whose diameter is $d$. Let $\delta(\cdot)$ denote the distance to the boundary $bD$.
Then there exists a positive constant $C$ depending only on $D$ such that for $(w, z) \in \bar D \times \bar D \setminus \{z=w\}$,\\
a) \begin{equation}\label{G}
  G(w, z)\le   \frac{C \delta(w)}{|z-w|}\log\frac{2d}{|z- w|},  \quad G(z, w)\le \frac{C \delta(z)}{|z-w|}\log \frac{2d}{|z-w|};
\end{equation}
b)\begin{equation}\label{g2}
      G(w, z) \le   \frac{C \delta(z)\delta(w)}{|z-w|^2}\log \frac{2d}{|z-w|}.
 \end{equation}
\end{lem}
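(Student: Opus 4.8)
The plan is to combine a crude global bound for $G$ with a local linear boundary-decay estimate near $C^2$ boundary points, the latter obtained from a maximum-principle comparison with an explicit barrier. By the symmetry $G(w,z)=G(z,w)$ it suffices to prove the first inequality in (a); part (b) then follows by invoking the same boundary-decay mechanism a second time, in the other variable. Throughout, all constants are to depend only on $D$.

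\textbf{Global bound and reductions.} Since $D$ has diameter $d$, one has $D\subset B(w,d)$ for every $w\in D$, so monotonicity of Green's functions under domain inclusion together with the explicit Green's function of a disc gives
\[
G(w,z)\le G_{B(w,d)}(w,z)=\frac{1}{2\pi}\log\frac{d}{|z-w|}.
\]
This already yields (a) whenever $\delta(w)\ge\tfrac14|z-w|$, since then $\delta(w)/|z-w|$ is bounded below; likewise, in the complementary reductions below, whenever $\delta(w)$ or $\delta(z)$ is comparable to $|z-w|$ the desired estimate reduces to a previously established one. The borderline case $|z-w|$ comparable to $d$ is disposed of directly, both sides then being controlled by absolute constants.

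\textbf{Core estimate (proof of (a)).} Assume $\delta(w)\le\tfrac14|z-w|$ and pick $w^*\in bD$ with $|w-w^*|=\delta(w)$. Set $r:=\tfrac12|z-w|$ and $\Omega:=D\cap B(w^*,r)$. Then $|z-w^*|\ge|z-w|-\delta(w)\ge\tfrac34|z-w|>r$, so $z\notin\bar\Omega$ and $u:=G(\cdot,z)$ is nonnegative and harmonic in $\Omega$, vanishes on $bD\cap B(w^*,r)$, and on the spherical part $\partial B(w^*,r)\cap\bar D$ is bounded by $\tfrac{1}{2\pi}\log\frac{4d}{|z-w|}$ by the global bound (there $|\zeta-z|\ge\tfrac14|z-w|$). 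Now the $C^2$ hypothesis enters: $D$ satisfies a uniform two-sided ball condition with some radius $r_0>0$, so after rescaling $\Omega$ about $w^*$ by $r^{-1}$ (which makes the relevant ball radii $\gtrsim r_0/r\gtrsim 1$) a standard Hopf-type barrier — a radial harmonic function centered at the exterior ball tangent to $bD$ at $w^*$, normalized to majorize $u$ on the spherical part of $\partial\Omega$ and to vanish on the $bD$ part — dominates $u$ by the maximum principle and decays linearly off $bD$. Unscaling gives $u(w)\le C\,\delta(w)\,r^{-1}\sup_{\partial B(w^*,r)\cap\bar D}u$, and combining with the global bound produces
\[
G(w,z)\le C\,\frac{\delta(w)}{|z-w|}\log\frac{d}{|z-w|},
\]
which is (a).

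\textbf{Iteration (proof of (b)).} Fix $w$ and put $v:=G(\cdot,w)=G(w,\cdot)$. If $\delta(z)\ge\tfrac14|z-w|$, then (b) follows from (a). Otherwise let $z^*\in bD$ realize $\delta(z)$ and apply the localization–barrier argument to $v$ on $D\cap B\!\left(z^*,\tfrac12|z-w|\right)$: on the spherical part of this region $|\zeta-w|\gtrsim|z-w|$, so (a) bounds $v(\zeta)$ by a constant times $\frac{\delta(w)}{|z-w|}\log\frac{d}{|z-w|}$, and the linear boundary decay then supplies the additional factor $\delta(z)/|z-w|$, giving
\[
G(w,z)\le C\,\frac{\delta(z)\delta(w)}{|z-w|^2}\log\frac{d}{|z-w|}.
\]

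The main obstacle is the barrier step: producing linear boundary decay with a constant depending \emph{only} on $D$. This is exactly where $C^2$ regularity is indispensable — it furnishes the uniform exterior/interior ball conditions needed for the comparison function — whereas merely Lipschitz boundaries would yield only H\"older decay and a strictly weaker bound. The remaining work, bookkeeping the various ``comparable to $|z-w|$'' cases and the borderline regime $|z-w|\sim d$, is routine but must be carried out carefully so that no constant depends on $w$ or $z$.
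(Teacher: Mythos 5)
Your strategy matches the paper's exactly: establish the crude global bound $G(w,z)\le\frac{1}{2\pi}\log\frac{d}{|z-w|}$ by the maximum principle, reduce to the regime where $\delta(w)$ is small compared to $|z-w|$, localize near the nearest boundary point $w^*$, majorize $G(\cdot,z)$ by a harmonic barrier that vanishes at $w^*$ and is comparable to the global bound on the inner spherical boundary, and finally feed estimate (a) back into the same machinery in the $z$-variable to obtain (b). The only substantive difference is the barrier. The paper takes $\alpha:=(4d)^{-1}r_{\mathrm{ext}}|z-w|$ (with $r_{\mathrm{ext}}$ the uniform exterior ball radius), works on the crescent $R:=B(w^*,\alpha)\setminus B(w^*+\alpha\nu,\alpha)$ with $\nu$ the outward normal, and uses the M\"obius-type comparison function $E(\zeta)=\Real\{\zeta/(\zeta-\alpha\nu)\}$ (after normalizing $w^*=0$, $\nu=i$). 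Because both balls have the same radius $\alpha$, $E$ is scale-invariant, so the lower bound $c_0$ on the circular part of $\partial R$ is an absolute constant, and $E(w)\le|w-w^*|/\alpha$ delivers the linear decay directly. Your ``radial (Hopf-type) barrier centered at the exterior ball'' accomplishes the same thing but is not scale-invariant, and your claim that rescaling by $r^{-1}$ with $r=\tfrac12|z-w|$ makes the exterior-ball radius $\gtrsim r_0/r\gtrsim 1$ is not correct in general: $r$ can be as large as $d/2\gg r_0$, in which case $r_0/r$ is small and a log-barrier of radius $r_0$ only gives $u(w)\lesssim M\delta(w)/(r_0\log(1+r/r_0))$, which is \emph{not} $\lesssim M\delta(w)/r$ because $\log(1+x)<x$. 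The fix is standard and in the spirit of what the paper does: shrink the localization radius to $\min\{|z-w|/2,\,c\,r_0\}$ (or, as the paper does, take the exterior tangent ball of radius $\alpha\sim|z-w|$, which sits inside the ball of radius $r_{\mathrm{ext}}$ since $\alpha<r_{\mathrm{ext}}/4$). With that adjustment your argument is correct and yields the same constants.
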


\vspace{0.15cm}

The proofs of Kerzman's estimates for real dimensions $n\geq 3$ were presented by Krantz in \cite[Propositions 8.2.2, 8.2.6]{K01}. For the sake of completeness, in Appendix \ref{AppB} we shall provide a proof to  Lemma \ref{Kerzman-Green}  on planar domains,  by imitating \cite{K01}. 
In \cite{BL91} Barletta and Landucci   estimated the canonical solution kernel $S$ by using Kerzman's estimates on Green's function. Their idea gives rise to the following result on  planar domains with $C^2$ boundaries.

\begin{pro}\label{|L|}

Let $D$ be a  bounded domain with $C^2$ boundary in $\mathbb C$, whose diameter is d. Then there exists a positive constant $C$ depending only on $D$ such that
\begin{equation}\label{S}
|S(w, z)|\le \frac{C }{|z-w|} \log \frac{2d}{|z-w|} ;   \quad \quad
|S(w, z)|  \le    \frac{C \delta(z)}{|z-w|^2}\log \frac{2d}{|z-w|},
\end{equation}
 for $(w, z) \in   D \times \bar D \setminus \{z=w\}$.
\end{pro}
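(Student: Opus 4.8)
My plan is to derive both inequalities in \eqref{S} from the identity \eqref{L=H+I}. Since $S=L-H$ by \eqref{sl}, that identity gives, for every $\epsilon$ with $0<\epsilon\le\epsilon(w,z):=\tfrac12\min\{|z-w|,\delta(w)\}$,
\[
S(w,z)=-\frac{1}{\epsilon\pi i}\int_0^{2\pi}G(w+\epsilon e^{it},z)\,e^{-it}\,dt=-\frac{1}{\epsilon\pi i}\int_0^{2\pi}\bigl[G(w+\epsilon e^{it},z)-G(w,z)\bigr]e^{-it}\,dt,
\]
the second form being legitimate since $\int_0^{2\pi}e^{-it}\,dt=0$; I will need it in the delicate case. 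I would split according to how $\delta(w)$ compares with $|z-w|$, and throughout use that $|z-w|\le d$ (so $\log\frac{2d}{|z-w|}\ge\log 2>0$) and that $w+\epsilon e^{it}\in D$ for $\epsilon$ in the admissible range.

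In the case $\delta(w)\le|z-w|$ I would take $\epsilon=\tfrac12\delta(w)$, the largest admissible value. Then $\delta(w+\epsilon e^{it})\le\tfrac32\delta(w)$ and $|z-w-\epsilon e^{it}|\ge\tfrac12|z-w|$, hence $\log\frac{d}{|z-w-\epsilon e^{it}|}\le\log\frac{2d}{|z-w|}$. Inserting Lemma \ref{Kerzman-Green}(a) (first inequality, with first variable $w+\epsilon e^{it}$) into the first form of the identity, the factor $\delta(w)$ in the numerator cancels the $1/\epsilon$, and integrating in $t$ yields the first bound of \eqref{S}; inserting Lemma \ref{Kerzman-Green}(b) in the same way yields the second.

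In the case $\delta(w)>|z-w|$ the first bound of \eqref{S} is immediate from the maximum principle: $|L(w,\cdot)|^2$ is subharmonic on $D$ and continuous on $\bar D$, so $|L(w,z)|\le\sup_{\zeta\in bD}|H(w,\zeta)|=(2\pi\delta(w))^{-1}<(2\pi|z-w|)^{-1}$, whence $|S(w,z)|\le|L(w,z)|+|H(w,z)|<(\pi|z-w|)^{-1}\le\frac{C}{|z-w|}\log\frac{2d}{|z-w|}$. For the second bound I would use the second form of the identity with $\epsilon=\tfrac14|z-w|$ (admissible, since here $\epsilon(w,z)=\tfrac12|z-w|$). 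The function $\zeta\mapsto G(\zeta,z)$ is harmonic on the disc $B(w,\tfrac12|z-w|)$, which lies in $D\setminus\{z\}$, so an interior gradient estimate bounds $|\nabla_\zeta G(\cdot,z)|$ on $\overline{B(w,\tfrac14|z-w|)}$ by $\tfrac{C}{|z-w|}\sup_{B(w,\tfrac12|z-w|)}G(\cdot,z)$; on that ball $|z-\zeta|\ge\tfrac12|z-w|$, so the \emph{second} inequality of Lemma \ref{Kerzman-Green}(a) together with the symmetry $G(\zeta,z)=G(z,\zeta)$ gives $G(\zeta,z)\le\tfrac{C\delta(z)}{|z-w|}\log\frac{2d}{|z-w|}$. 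Hence $|G(w+\epsilon e^{it},z)-G(w,z)|\le\epsilon\cdot\tfrac{C\delta(z)}{|z-w|^2}\log\frac{2d}{|z-w|}$ along the segment from $w$ to $w+\epsilon e^{it}$, and the $\epsilon$'s cancel upon integrating in $t$.

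The step I expect to be the real obstacle is precisely this last one. When $\delta(w)>|z-w|$ one cannot take $\epsilon$ larger than comparable to $|z-w|$, and the crude estimate $|S(w,z)|\le\tfrac1{\epsilon\pi}\int_0^{2\pi}G(w+\epsilon e^{it},z)\,dt$ then leaves an unwanted factor $\delta(w)/|z-w|$ (equivalently, via $\delta(w)\le d$, a factor $d/|z-w|$) which is not spurious: one must exploit the vanishing of $\int_0^{2\pi}e^{-it}\,dt$ to pass to the increment of $G$ away from $G(w,z)$ and then use harmonicity of $G(\cdot,z)$. One must also be careful to invoke the inequality of Lemma \ref{Kerzman-Green}(a) whose numerator is $\delta(z)$ rather than the one involving the distance to $bD$ of a point near $w$; otherwise the factor $d$ reappears and the sharper of the two estimates in \eqref{S} is lost.
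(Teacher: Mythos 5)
Your proof is correct in every step, but it departs from the paper's route in the case $\delta(w)>|z-w|$, and your stated reason for the departure — that the crude estimate leaves an unwanted factor $\delta(w)/|z-w|$ — is not accurate. The paper handles both inequalities of \eqref{S}, in both regimes, by nothing more than the crude bound $|S(w,z)|\le\frac{1}{\epsilon\pi}\int_0^{2\pi}G(w+\epsilon e^{it},z)\,dt$ together with a judicious choice of which pointwise estimate on $G$ to plug in. In the regime $\delta(w)\le|z-w|$ your treatment (take $\epsilon=\tfrac12\delta(w)$, invoke Lemma \ref{Kerzman-Green}(a) resp.\ (b), cancel $\delta(w)$ against $1/\epsilon$) is exactly the paper's. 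In the regime $\delta(w)>|z-w|$, for the first inequality you substitute a maximum-principle argument applied directly to the harmonic extension $L(w,\cdot)$; this is a legitimate and arguably tidier alternative to the paper's use of the elementary bound \eqref{G<} in the crude integral. But for the second inequality the crude estimate already suffices: with $\epsilon=\tfrac12|z-w|$ and $\zeta$ on the circle of radius $\epsilon$ about $w$ one has $|z-\zeta|\ge\tfrac12|z-w|$, so the \emph{second} inequality in \eqref{G}, combined with the symmetry $G(\zeta,z)=G(z,\zeta)$, gives $G(\zeta,z)\le\frac{2C\delta(z)}{|z-w|}\log\frac{2d}{|z-w|}$, and dividing by $\epsilon$ lands exactly on $\frac{C'\delta(z)}{|z-w|^2}\log\frac{2d}{|z-w|}$. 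The unwanted factor $\delta(w)/|z-w|$ appears only if one mistakenly uses \eqref{g2} in this regime; that is, the issue is which of Kerzman's estimates to invoke, not whether the crude bound can absorb $1/\epsilon$.

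Consequently the cancellation $\int_0^{2\pi}e^{-it}\,dt=0$ and the interior gradient estimate for harmonic functions, which you present as essential to avoid the obstacle, are not needed here. Your gradient argument is nevertheless correct and self-contained — in effect it re-derives a special case of the derivative bound on $G$ that the paper records separately in Lemma \ref{Kerzman-Green2} and reserves for estimating $\nabla_z S$ in Proposition \ref{kkd}. So the net comparison is: same decomposition into the two regimes, same two Kerzman estimates as the engine, but in the near-diagonal regime you replace the paper's one-line crude bound (with \eqref{G<} resp.\ \eqref{G} inserted) by a maximum-principle argument and a gradient-of-$G$ argument, buying nothing except robustness you did not actually need, at the cost of a misdiagnosis of where the difficulty lies.
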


\begin{proof}
It suffices to estimate the last term in \eqref{L=H+I}. We  first note that  from the definition of $\epsilon$, 
\begin{equation}\label{comp}
| w+\epsilon  e^{it}- z  |\ge | w-z| -\epsilon \ge  \frac{|z-w|}{2}, \ \ \ \text{and}\ \ \ \delta( w+\epsilon  e^{it}  )\le \delta( w  )+\epsilon\le \frac{3\delta( w  )}{2}.\end{equation}

When $|z-w|\ge \delta(w)$, we have $\epsilon =  2^{-1}{\delta(w)} $. Thus from (\ref{G}) and (\ref{comp}),  
$$
\frac{1}{ \pi\epsilon}\left | \int_0^{2\pi}G(w+\epsilon e^{it}, z)dt   \right | \le \frac{6 C }{|z-w|}  \log \frac{2d}{|z-w|}.
$$
When $|z-w|\le \delta(w)$, $\epsilon =  2^{-1} {|z-w|} $. Since  $|G(w, z)| \le  \frac{1}{2\pi}\log\frac{d}{|z-w|}$  (see \eqref{G<} for instance), we have
$$
\frac{1}{\pi \epsilon} \left | \int_0^{2\pi}G(w+\epsilon e^{it}, z)dt  \right | \le \frac{2\pi ^{-1}}{|z-w|}  \log \frac{2d}{|z-w|}.
$$
Therefore, we get 
$$
|S(w, z)|\le \frac{ 2\max \{\pi ^{-1}, 3 C \} }{|z-w|}   \log \frac{ 2d}{|z-w|}.
$$  
For the second inequality, it suffices to  make a similar argument replacing  (\ref{G}) and \eqref{G<} by (\ref{g2}) and (\ref{G}), respectively. 

\end{proof}

 \vspace{0.15cm}
 
For a bounded domain  $\Omega$  in $\mathbb C^n$, let $A^2(\Omega) := L^2(\Omega) \cap \hbox{ker}(\bar \partial)$ represent the space of square integrable holomorphic functions on  $\Omega$.  The Bergman projection  on $\Omega$, denoted by  $\mathbf P$, is the orthogonal projection of $L^2(\Omega)$ onto its closed subspace $A^2(\Omega)$. Let $K$ be the Bergman kernel such that for all $h \in L^2(\Omega)$ and $w \in \Omega$,
\begin{equation} \label{reproducing}
\mathbf P h(w) = \int_{\Omega} K(w, z) h(z) {d \nu_z},
\end{equation}
where $d\nu$ is the Lebesgue $\mathbb{R}^{2n}$ measure.
A  solution $u \in L^2(\Omega)$ to the $\bar\partial$-equation is  canonical  if $\mathbf P u =0$. 

On a planar domain $D$, recall (see \cite{BL91, Bell}) that for $(z, w) \in D \times D $ off the diagonal, the Bergman kernel is related to Green's function $G$ by
\begin{equation*}
K(z, w)= {-4} \frac{\partial^2 G(z, w)} {\partial z \partial \bar w}.
\end{equation*}
Consequently, for $L(w, z)$ expressed in \eqref {L=H+I} over $D$  with $C^2$ boundary, it holds that
\begin{equation}\label{LtoK}
 K(w, z) = \frac{ - i}{2\pi} \int_{\zeta\in b D}  \frac{K(\zeta, z) }{\zeta-w} d\zeta= \frac{2 i}{\pi} \int_{\zeta\in b D} \frac{\partial^2  G(z, \zeta)}{\partial \zeta \partial \bar z} \frac{1 }{\zeta-w} d\zeta= {2 i} \frac{\partial L(w, z)}{\partial \bar z}.
 \end{equation}

 Barletta and Landucci observed that $\mathbf T$ defined in \eqref{c1} gives the canonical solution to the $\bar\partial$-equation, and its uniform estimate was written as a remark in \cite[p. 103] {BL91} without proof. For the data in the Lebesgue function spaces,  the result below is a consequence of Proposition \ref{|L|}. For abbreviation of  notations, write $\|\cdot\|_p: = \|\cdot\|_{L^p }$ for $p \in [1, \infty]$ here and henceforth, when there is no confusion of domains.

\begin{pro} \label{dim 1} Let $D$ be a  bounded domain with $C^2$ boundary in $\mathbb C$. Then for any $f \in L^p(D),$ $p > 1$, it holds that $\mathbf Tf\in L^2(D)$ and $\mathbf T$ is the canonical solution operator for $\bar\partial u =fd\bar z$. Moreover,  
\begin{enumerate}[label=(\roman*)]

\item for any $ p\in [1,  2]$ and  $q \in (0,  \frac{2p}{2-p})$, there exists a positive constant $C=C(D, p, q)$ such that for any $f\in L^p(D) $, 
\begin{equation}\label{pq}
    \|\mathbf Tf\|_{q}\le C\|f\|_{p };
    \end{equation}
    
\item for any $p \in (2, \infty]$, there exists a positive constant $C=C(D, p)$ such that for any $f \in L^p (D)$,   \begin{equation}\label{pq2}
    \|\mathbf Tf\|_{\infty}\le C\|f\|_{p}.
    \end{equation}
    
\end{enumerate}
\end{pro}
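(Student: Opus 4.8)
The plan is to derive both $L^p$--$L^q$ bounds directly from the pointwise estimate on $S$ in Proposition~\ref{|L|} by convolution and Hölder inequalities, and then to treat the $L^2$-membership and the canonicity separately.

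Write $\mathbf Tf(w)=2i\int_D S(w,z)f(z)\,d\nu_z$ and let $\psi(\xi):=\tfrac{C}{|\xi|}\log\tfrac{2d}{|\xi|}$ for $0<|\xi|\le d$ and $\psi(\xi):=0$ otherwise, so that the first inequality of Proposition~\ref{|L|} gives $|S(w,z)|\le\psi(w-z)$ on $D\times D$; a computation in polar coordinates shows $\psi\in L^s(\mathbb R^2)$ exactly for $s\in[1,2)$. Since $|\mathbf Tf|\le 2\,\psi*(|f|\mathbf 1_D)$, Young's inequality yields $\|\mathbf Tf\|_q\le 2\|\psi\|_s\|f\|_p$ whenever $\tfrac1q=\tfrac1s+\tfrac1p-1$ with $1\le s<2$; here $s<2$ translates precisely into $q<\tfrac{2p}{2-p}$ and $s\ge1$ into $q\ge p$, so this covers all $q\in[\max\{1,p\},\tfrac{2p}{2-p})$, and the remaining values $q<\max\{1,p\}$ follow since $D$ has finite measure and $\tfrac{2p}{2-p}\ge2$; this gives~\eqref{pq}. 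For~\eqref{pq2}, if $p\in(2,\infty]$ then $p'=\tfrac p{p-1}\in[1,2)$ and Hölder gives $|\mathbf Tf(w)|\le 2\|S(w,\cdot)\|_{L^{p'}(D)}\|f\|_p\le 2\|\psi\|_{L^{p'}(\mathbb R^2)}\|f\|_p$, uniformly in $w$. In particular $\mathbf Tf\in L^2(D)$ for every $p>1$: use~\eqref{pq} with $q=2$ when $p\in(1,2]$ (legitimate, since $2<\tfrac{2p}{2-p}\iff p>1$) and~\eqref{pq2} together with $L^\infty(D)\subset L^2(D)$ when $p>2$.

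It remains to show $\mathbf T$ is canonical. The identity $\partial_{\bar w}\mathbf Tf=f$ in the sense of distributions is already recorded, so I only need $\mathbf P(\mathbf Tf)\equiv0$; by Fubini — legitimate since $K(w,\cdot)\in L^\infty(D)$ for fixed $w\in D$ and $\mathbf T|f|\in L^1(D)$ — this reduces to $\int_D K(w,z)\,S(z,\zeta)\,d\nu_z=0$ for $w,\zeta\in D$. I would prove this by showing that both $\int_D K(w,z)L(z,\zeta)\,d\nu_z$ and $\int_D K(w,z)H(z,\zeta)\,d\nu_z$ equal $L(w,\zeta)$. For the first, $L(\cdot,\zeta)$ is holomorphic on $D$ and bounded there — it equals $S(\cdot,\zeta)+H(\cdot,\zeta)$, which is bounded near $bD$ by Proposition~\ref{|L|} since $|\zeta-\cdot|$ is bounded below there — hence it lies in $A^2(D)$ and the Bergman reproducing property applies. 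For the second, substitute $K(w,z)=2i\,\partial_{\bar z}L(w,z)$ from~\eqref{LtoK} and apply the Cauchy–Pompeiu formula to $z\mapsto L(w,z)\in C^{1}(\bar D)$; this produces $L(w,\zeta)$ plus the boundary term $\tfrac{1}{2\pi i}\int_{bD}\tfrac{L(w,z)}{z-\zeta}\,dz$, which vanishes because $L(w,z)=\tfrac1{2\pi i(z-w)}$ on $bD$ and $\int_{bD}\tfrac{dz}{(z-w)(z-\zeta)}=0$ by partial fractions and Cauchy's theorem ($w,\zeta$ both interior). Subtracting, the inner integral is $0$, so $\mathbf P(\mathbf Tf)\equiv0$. (Alternatively, canonicity can be obtained by approximating $f$ in $L^p(D)$ by smooth data $f_j$, for which $\mathbf Tf_j$ is canonical by Barletta–Landucci, and passing to the $L^2$-limit via the $L^p\to L^2$ bound just proved.)

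The estimates~\eqref{pq}--\eqref{pq2} and the $L^2$-membership should be entirely routine once Proposition~\ref{|L|} is available; I expect the only point requiring genuine care to be the canonicity argument, specifically checking that $L(\cdot,\zeta)\in A^2(D)$ and that the Cauchy–Pompeiu manipulation is valid across the diagonal singularity of $H(\cdot,\zeta)$.
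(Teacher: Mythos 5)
Your proof is correct and follows essentially the same route as the paper's: the $L^p$--$L^q$ bounds come from the first estimate in Proposition~\ref{|L|} via Young's convolution inequality (with H\"older for the case $p>2$), and canonicity is established by splitting $S=L-H$, applying Bergman reproducing to the $L$-piece and the Cauchy--Pompeiu formula (via $K=2i\,\partial_{\bar z}L$) to the $H$-piece, with the resulting boundary integral vanishing because $L(w,\cdot)|_{bD}=H(w,\cdot)$ and $\int_{bD}\frac{dz}{(z-w)(z-\zeta)}=0$. The only addition beyond the paper's exposition is your explicit verification that $L(\cdot,\zeta)$ is bounded on $D$, hence lies in $A^2(D)$, justifying the reproducing step; the paper leaves this implicit.
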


\begin{proof}
By the first inequality in (\ref{S}), the kernel $S$ is bounded by a convolution type function in $ L^r(D)$ for any $r \in [1, 2)$. Young's convolution inequality then implies the boundedness of $\mathbf T$ from $L^p(D)$ into $L^q(D)$ whenever $p^{-1}+ r^{-1} = q^{-1}+1$ with $  p, q, r \in [1, \infty]$. (\ref{pq}) and (\ref{pq2}) thus follow.

It remains to show that the solution operator $\mathbf T$ is canonical.
For any $f \in L^p(D),$ $p > 1$, notice by (i) that  $\mathbf Tf\in L^2(D)$ so $\mathbf P\mathbf T f $ is well defined. By the Cauchy--Pompeiu formula, \eqref{Dirichlet} and \eqref{LtoK},
\begin{align*}
    \mathbf P \mathbf T f  (\zeta) 
        =&\int_D { f} (z) \left(\int_D K(\zeta, w)L(w, z) d\nu_w + \frac{1}{2\pi i}\int_D \frac{K(\zeta, w)}{w-z}d\nu_w\right) d\bar z\wedge dz\\
    =&\int_D {  f} (z) \left(L(\zeta, z) -\frac{1}{2\pi i}\int_D \frac{\partial L(\zeta, w)}{\partial \bar w} \frac{1}{w-z} {dw\wedge d \bar w}  \right )  d\bar z\wedge dz\\
    =&\int_D {  f} (z)\left ( \frac{1}{2\pi i} \int_{bD} \frac{ L(\zeta, w)}{w-z} dw \right)  d\bar z\wedge dz\\
    =& \int_D {  f} (z)  \frac{1}{(2\pi i)^2} \frac{1}{z- \zeta}  \left ( \int_{bD}  \frac{ dw}{w-z } - \int_{bD}  \frac{ dw}{w-\zeta}  \right)  d\bar z\wedge dz =0,
\end{align*}
which means that $(\mathbf T f  ) \perp A^2(D)$ and the proof is complete.

\end{proof}

As a side product of Proposition \ref{dim 1}, the corollary below on the Bergman projection follows immediately from the fact that for any $u \in L^2(D) \cap  \text{Dom}(\bar\partial)$,
\begin{equation*}
  \mathbf Pu=u- \mathbf T \bar\partial u.
   \end{equation*}
 
\begin{cor} \label{cor=dim1}
Let $\mathbf P$ be the Bergman projection over a  bounded domain $D$ with $C^2$ boundary in $\mathbb C$. Then there exists a positive constant $C$ depending only on $D$ such that for any $u\in W^{1, \infty}(D)$ 
it holds that
$$
    \|u- \mathbf Pu\|_{ \infty  }\le  C  \|   \bar\partial u  \|_{ \infty }  ;   \quad \quad
  \|\mathbf Pu\|_{ \infty  }\le   \|u\|_{  \infty  }  + C\|   \bar\partial u  \|_{ \infty }.
$$

\end{cor}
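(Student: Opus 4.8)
The plan is to obtain Corollary \ref{cor=dim1} as an immediate consequence of Proposition \ref{dim 1} together with the identity $\mathbf P u = u - \mathbf T\bar\partial u$ recorded just above the statement. First I would verify that $W^{1,\infty}(D)\subset L^2(D)\cap\mathrm{Dom}(\bar\partial)$: since $D$ is bounded, any $u\in W^{1,\infty}(D)$ lies in $L^2(D)$, and the coefficient $\partial u/\partial\bar z$ of the $(0,1)$-form $\bar\partial u$ belongs to $L^\infty(D)\subset L^2(D)$, so $u\in\mathrm{Dom}(\bar\partial)$ and $\|\bar\partial u\|_\infty=\|\partial u/\partial\bar z\|_{L^\infty(D)}$. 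Moreover $\bar\partial u$ is trivially $\bar\partial$-closed on a planar domain.

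Next I would recall why $u-\mathbf P u=\mathbf T\bar\partial u$. By Proposition \ref{dim 1}(ii) applied with $p=\infty$, the function $\mathbf T\bar\partial u$ lies in $L^2(D)$; it solves $\bar\partial v=\bar\partial u$ in the sense of distributions, and it is canonical, i.e.\ $\mathbf T\bar\partial u\perp A^2(D)$. Hence $u-\mathbf T\bar\partial u$ is holomorphic and square integrable, so $u=(u-\mathbf T\bar\partial u)+\mathbf T\bar\partial u$ is precisely the orthogonal decomposition of $u$ along $A^2(D)\oplus A^2(D)^\perp$; therefore $\mathbf P u=u-\mathbf T\bar\partial u$, which is exactly the identity stated before the corollary.

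Finally, applying the estimate \eqref{pq2} of Proposition \ref{dim 1}(ii) with $p=\infty$ to $f=\partial u/\partial\bar z$ gives a constant $C=C(D)$ with $\|u-\mathbf P u\|_\infty=\|\mathbf T\bar\partial u\|_\infty\le C\|\bar\partial u\|_\infty$, which is the first inequality. The second follows from the triangle inequality, $\|\mathbf P u\|_\infty\le\|u\|_\infty+\|u-\mathbf P u\|_\infty\le\|u\|_\infty+C\|\bar\partial u\|_\infty$. There is no substantial obstacle here; the only point deserving care is the justification of $\mathbf P u=u-\mathbf T\bar\partial u$, and that rests entirely on the two facts established in Proposition \ref{dim 1}, namely that $\mathbf T$ is the canonical solution operator and that it maps $L^\infty(D)$ boundedly into $L^2(D)$ (indeed into $L^\infty(D)$).
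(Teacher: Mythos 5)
Your proof is correct and follows the same approach the paper intends: the corollary is derived from the identity $\mathbf P u = u - \mathbf T\bar\partial u$ (which you justify carefully via the orthogonal decomposition and the canonicity of $\mathbf T$ from Proposition \ref{dim 1}) together with the $L^\infty$--$L^\infty$ bound \eqref{pq2} and the triangle inequality. The paper leaves these details implicit, stating only that the corollary ``follows immediately'' from that identity, so your write-up is simply a fuller version of the paper's one-line argument.
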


 \section{Derivatives of one dimensional canonical solution kernel}

\vspace{0.15cm}

Let $D \subset \mathbb C$ be a bounded domain with $C^{2}$  boundary. The goal of the section is to estimate the gradient of the canonical solution kernel $S$ on $D$. In view of  \eqref{sl}, we shall need bounds for the derivatives of Green's function as follows. Here  $\nabla G$ represents the first order derivatives of $G$ with respect to either $z$ or $w$ variable, while $\nabla_z G$ represents the first order derivatives of $G$ with respect to $z$ variable.

\begin{lem}  \label{Kerzman-Green2} Under the same assumptions as in Lemma \ref{Kerzman-Green}, it holds that 
 \begin{equation}\label{Gd}
     |\nabla G( w, z)|\le \frac{C}{|z-w|}\log\frac{2d}{|z-w|}; \quad \quad   |\nabla_z G(w, z)| \le \frac{C\delta(w)}{|z-w|^2}\log \frac{2d}{|z-w|},
    \end{equation}
    for all $(w, z) \in  D \times \bar D \setminus \{z=w\}$.
\end{lem}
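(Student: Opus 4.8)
The plan is to follow the same philosophy used for the pointwise bounds in Lemma~\ref{Kerzman-Green}: obtain the gradient estimates by combining the elementary bound coming from the logarithmic singularity of $G$ with the maximum principle and interior elliptic (gradient) estimates for harmonic functions, then localize near the boundary using the $C^2$ regularity of $bD$. Throughout, write $G(w,z)=-\frac{1}{2\pi}\log|z-w|+h(w,z)$, where for each fixed $w$ the function $z\mapsto h(w,z)$ is harmonic on $D$ and, on $bD$, equals $\frac{1}{2\pi}\log|z-w|$; this is the standard decomposition, and all the ``extra'' work is in controlling $\nabla h$.

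First I would handle the easy regime $|z-w|\ge \tfrac14\delta(w)$ (say), which also covers the case where $z$ is comparable to $w$. On the ball $B:=B\!\left(z,\tfrac12\delta(z)\right)$, the function $w'\mapsto G(w',z)+\frac{1}{2\pi}\log|z-w'|$ is harmonic in $w'$, hence by the interior gradient estimate for harmonic functions its gradient at $w$ is bounded by $\delta(z)^{-1}$ times its sup over $B$, which by Lemma~\ref{Kerzman-Green} is $O\!\left(\delta(z)|z-w|^{-1}\log\frac{d}{|z-w|}\right)$ up to adding the explicit derivative $\frac{1}{2\pi}|z-w|^{-1}$ of the log term; since $|z-w|$ and $\delta(z)$ are comparable to each other and to $\delta(w)$ in this regime (after possibly shrinking constants and using the triangle inequality), this yields both inequalities in \eqref{Gd}. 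The genuinely delicate regime is $|z-w|\le\tfrac14\delta(w)$, i.e. $w$ deep inside $D$ and $z$ near the boundary (or vice versa). Here one cannot use a large interior ball around $z$. Instead I would fix $w$ and work with the harmonic function $z\mapsto h(w,z)$: its boundary values $\frac{1}{2\pi}\log|\zeta-w|$ on $bD$ are smooth with gradient (tangential derivative) bounded by $|\zeta-w|^{-1}\le C\,d^{-1}$ uniformly, so $h(w,\cdot)\in C^{0,1}(\bar D)$ with a bound depending only on $D$; combined with the interior estimate $|\nabla_z h(w,z)|\le C\,\delta(z)^{-1}\sup_{bD}|\log|\zeta-w||$ this controls $\nabla_z G$ by $\frac{1}{2\pi}|z-w|^{-1}+C\delta(z)^{-1}$, and since $\delta(z)\ge c|z-w|$ is generally false we must do better. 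The right move, following Kerzman/Krantz, is to compare $G(w,\cdot)$ with the explicitly known Green's function of a half-plane (or disc) osculating $bD$ at the boundary point nearest to $z$: the $C^2$ hypothesis gives a two-sided comparison of $G$ with the model Green's function on a boundary collar, and the model has the exact derivative bound $|\nabla_z G_{\mathrm{model}}(w,z)|\le C\,\delta(w)|z-w|^{-2}$ (this is a one-line computation for the half-plane kernel $-\frac{1}{2\pi}\log\bigl|\frac{z-w}{z-\bar w}\bigr|$); transferring this comparison back to $D$ via the $C^2$ boundary chart — which distorts lengths and distances to the boundary only by bounded factors — gives the second inequality in \eqref{Gd}, and the first follows since $\delta(w)\le d$. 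The symmetric statement with the roles of $z$ and $w$ swapped comes from $G(w,z)=G(z,w)$.

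The main obstacle is precisely this boundary-collar comparison step: making rigorous that, uniformly for $z$ in a tubular neighborhood of $bD$, the Green's function of $D$ and that of the osculating half-plane agree up to multiplicative constants \emph{together with their first derivatives in $z$}. Pointwise comparison of $G$ itself is Harnack-type and was already used for Lemma~\ref{Kerzman-Green}; upgrading to the gradient requires a Schauder/elliptic-regularity argument applied to the \emph{difference} $G-G_{\mathrm{model}}$ on the collar (the difference solves $\Delta=0$ with boundary data controlled by the $C^2$-distance between $bD$ and its osculating line, hence by $\mathrm{dist}^2$ to the contact point), followed by interior gradient estimates scaled to the local length $\delta(z)$. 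Since the paper explicitly defers the proof of Lemma~\ref{Kerzman-Green} to Section~6 ``by imitating \cite{K01}'', I expect Lemma~\ref{Kerzman-Green2} to be proved there in the same breath, and the write-up should simply carry the same osculating-model / scaled-interior-estimate machinery one derivative further.
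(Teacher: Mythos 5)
Your proposal circles around the right idea---combine an interior gradient estimate for harmonic functions with the pointwise bounds of Lemma~\ref{Kerzman-Green}---but it takes a detour that the paper does not need, and the details you supply contain genuine errors.

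The paper's proof (Section 6) is considerably more direct. Since $G(\cdot,w)$ is harmonic on $D\setminus\{w\}$, one chooses $\epsilon=\min\{\delta(z_0)/2,\,|z_0-w|/2\}$ so that $B(z_0,\epsilon)\subset D\setminus\{w\}$, and applies the mean-value gradient estimate $|\nabla_z G(w,z_0)|\le (2/\epsilon)\sup_{B(z_0,\epsilon)}|G(\cdot,w)|$ to $G$ \emph{itself}, with no decomposition into a logarithmic part and a harmonic remainder. On $B(z_0,\epsilon)$ one has $|z-w|\ge|z_0-w|/2$ and $\delta(z)\le 2\delta(z_0)$, so the bounds of Lemma~\ref{Kerzman-Green} plug in directly (using \eqref{G<} and \eqref{G} for the first inequality of \eqref{Gd}, and \eqref{G}, \eqref{g2} for the second), with the two cases dictated by which of $\delta(z_0)/2$, $|z_0-w|/2$ is smaller. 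The osculating half-plane model, the Schauder estimate on the difference, and the boundary-collar analysis in your write-up are all unnecessary once Lemma~\ref{Kerzman-Green} is in hand: the sole additional ingredient is the interior estimate on a judiciously shrunk ball.

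There are also concrete gaps in the proposal. In your ``easy regime'' $|z-w|\ge\tfrac14\delta(w)$ you estimate the gradient of a function of $w'$ at $w$ using the ball $B(z,\tfrac12\delta(z))$, but nothing in that regime forces $w$ to lie inside this ball (let alone uniformly well inside it), so the interior estimate does not apply at $w$; moreover the assertion that $|z-w|$, $\delta(z)$ and $\delta(w)$ are mutually comparable there is false (let $z$ approach $bD$, so $\delta(z)\to 0$ while $|z-w|$ stays bounded below). Your ``delicate regime'' $|z-w|\le\tfrac14\delta(w)$ is also mischaracterized: there $\delta(z)\ge\delta(w)-|z-w|\ge\tfrac34\delta(w)$, so $z$ is just as deep in $D$ as $w$, not near the boundary; and since $\delta(w)\ge 4|z-w|$ in this regime, the first inequality of \eqref{Gd} already implies the second, so this case is in fact trivial. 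The configuration that actually requires care is $\delta(z_0)\ll|z_0-w|$, which the paper handles by taking $\epsilon=\delta(z_0)/2$ and exploiting the linear-in-$\delta$ decay in \eqref{G}.
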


 Lemma \ref{Kerzman-Green2} is essentially an application of the mean value theorem for harmonic functions to Lemma \ref{Kerzman-Green}, and we will give its proof in Appendix \ref{AppB}.  Such  estimates for real dimensions bigger than two can be found in the comment after Theorem 1.2.8 in \cite{Kenig} without proof. Making use of a similar approach as in  Proposition \ref {|L|}, we obtain the following estimate for the derivatives  of the canonical solution kernel.

\begin{pro}\label{kkd}
Under the same assumptions as in Proposition \ref{|L|}, it holds that 
\begin{equation*}
|\nabla_z S(w, z) |\le \frac{C }{|z-w|^{2}}  \log \frac{2d}{|z-w|}.
\end{equation*}
\end{pro}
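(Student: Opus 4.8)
The plan is to differentiate the representation formula \eqref{L=H+I} for $L(w,z)$ in the $z$-variable and estimate the resulting integrand using the bounds already established for Green's function and its gradient. Recall that $S(w,z) = L(w,z) - H(w,z)$, and since $H(w,z) = (2\pi i(z-w))^{-1}$ is holomorphic in $z$, we have $\nabla_z S = \nabla_z L$. From \eqref{L=H+I}, for any admissible $\epsilon$,
\begin{equation*}
L(w,z) = H(w,z) - \frac{1}{\epsilon \pi i}\int_0^{2\pi} G(w+\epsilon e^{it}, z)\, e^{-it}\, dt,
\end{equation*}
so differentiating under the integral sign in $z$ yields
\begin{equation*}
\nabla_z S(w,z) = -\frac{1}{\epsilon \pi i}\int_0^{2\pi} \nabla_z G(w+\epsilon e^{it}, z)\, e^{-it}\, dt.
\end{equation*}
The interchange of differentiation and integration is justified because $G(\cdot,z)$ is smooth away from the diagonal and, for the chosen $\epsilon$, the points $w+\epsilon e^{it}$ stay at distance at least $|z-w|/2$ from $z$ (by \eqref{comp}), uniformly in $t$.

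Next I would insert the second estimate of \eqref{Gd} from Lemma \ref{Kerzman-Green2}, namely $|\nabla_z G(w,z)| \le C\delta(w)|z-w|^{-2}\log(d/|z-w|)$, applied at the point $w+\epsilon e^{it}$ in place of $w$. By \eqref{comp} we control $|w+\epsilon e^{it} - z| \ge |z-w|/2$ and $\delta(w+\epsilon e^{it}) \le \tfrac{3}{2}\delta(w)$, so the integrand is bounded by a constant times $\delta(w)|z-w|^{-2}\log(2d/|z-w|)$, where the logarithm is arranged to be positive on $D\times\bar D$ off the diagonal by using $2d$ rather than $d$. As in the proof of Proposition \ref{|L|}, I split into two cases according to the size of $|z-w|$ relative to $\delta(w)$. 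When $|z-w| \ge \delta(w)$ we take $\epsilon = \delta(w)/2$, so $1/\epsilon = 2/\delta(w)$ and the factor $\delta(w)$ from $\nabla_z G$ cancels against $1/\epsilon$, leaving a bound of the claimed form. When $|z-w| \le \delta(w)$ we instead take $\epsilon = |z-w|/2$; here I would use the first estimate of \eqref{Gd}, $|\nabla G| \le C|z-w|^{-1}\log(d/|z-w|)$ (which does not carry the $\delta(w)$ factor), so that $1/\epsilon$ contributes $2/|z-w|$ and one again obtains $C|z-w|^{-2}\log(2d/|z-w|)$.

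Combining the two cases, in both regimes we arrive at $|\nabla_z S(w,z)| \le C|z-w|^{-2}\log(2d/|z-w|)$ with $C$ depending only on $D$, which is the assertion of Proposition \ref{kkd}. I do not expect a serious obstacle here: the argument is a direct adaptation of the proof of Proposition \ref{|L|}, with Lemma \ref{Kerzman-Green} replaced by Lemma \ref{Kerzman-Green2}. The only point requiring a little care is the justification of differentiating under the integral sign and the bookkeeping of which gradient bound to use in each case so that the $1/\epsilon$ prefactor is absorbed correctly; in particular, in the case $|z-w|\le \delta(w)$ one must avoid using the $\delta$-weighted bound, since $\delta(w)$ could be much larger than $|z-w|$ and would not cancel the $1/\epsilon = 2/|z-w|$ factor. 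Using the unweighted gradient estimate there resolves this cleanly.
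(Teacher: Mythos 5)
Your approach is essentially identical to the paper's: differentiate the representation \eqref{L=H+I}, split into the two regimes according to whether $|z-w|\ge\delta(w)$ or $|z-w|\le\delta(w)$ (choosing $\epsilon\sim\delta(w)$ or $\epsilon\sim|z-w|$ respectively), and apply the $\delta$-weighted gradient bound in the first case and the unweighted one in the second, so that the prefactor $1/\epsilon$ is absorbed --- exactly as in Proposition \ref{kkd}. Two small points to tighten. First, the assertion that ``$\nabla_z S=\nabla_z L$ since $H$ is holomorphic in $z$'' is not correct as stated: holomorphy gives $\bar\partial_z H=0$, but $\partial_z H\neq 0$, so $\nabla_z H\neq 0$. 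This is harmless here because you never actually use it --- the next line correctly reads off $\nabla_z S$ from $S=L-H=-(\epsilon\pi i)^{-1}\int_0^{2\pi}G(w+\epsilon e^{it},z)e^{-it}\,dt$ --- but the stated reason should be dropped. Second, the differentiation-under-the-integral step that you flag as delicate is precisely where the paper inserts the $\epsilon_0$ device: it fixes $\epsilon_0=2^{-1}\epsilon(w,z_0)=4^{-1}\min\{|z_0-w|,\delta(w)\}$, strictly smaller than the admissible maximum at $z_0$, so that by continuity $\epsilon_0\le\epsilon(w,z)$ for all $z$ near $z_0$; the representation then has a $z$-independent $\epsilon$ on a neighbourhood of $z_0$ and may be differentiated freely. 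Your choice $\epsilon=\epsilon(w,z)$ is $z$-dependent in the second regime, which is exactly the issue the halved $\epsilon_0$ circumvents.
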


\begin{proof}
Fix $w\in D$ and let $z_0  \in \bar D \setminus \{w\}$. By \eqref{sl},

\begin{equation} \label{K-est}
  \nabla_z S(w, z_0 )   =   \left. \nabla_z  \left(\frac{i}{\epsilon\pi    }\int_0^{2\pi}G(w+\epsilon e^{it}, z)e^{-it}dt \right) \right |_{z=z_0},
  \end{equation}
where $\epsilon$ can be any positive number no bigger than $
\epsilon (w, z):=2^{-1}{\min} \left \{ {|z-w|} , {\delta(w)}  \right \}>0$.  
Set $\epsilon_0: = 2^{-1}\epsilon (w, z_0) = 4^{-1}{\min} \left \{ {|z_0-w|} , {\delta(w)}  \right \}$. 
For any point $z$ sufficiently close to $z_0$, the continuity of $\epsilon( w, z)$ in $z$ implies that $\epsilon_0\le \epsilon (w, z) $. Then on the right hand side of \eqref{K-est}, $\epsilon$   can be replaced by $\epsilon_0$, as $z$ approaches $z_0$. So
\begin{equation*}
  \nabla_z S(w, z_0 ) =  \frac{i}{\epsilon_0 \pi }  \int_0^{2\pi}  \left. \nabla_z     G(w+ \epsilon_0 e^{it}, z)\right |_{z=z_0} e^{-it}dt.
\end{equation*}
When $\epsilon_0  =   4^{-1} \delta(w) $, by the second part of \eqref{Gd} and (\ref{comp}),  we have
$$ 
|\nabla_z S(w, z_0 )|  \le   \frac{ 4}{ \delta(w) \pi  } \left| \int_0^{2\pi}   \left.  \nabla_z     G(w+\epsilon_0  e^{it}, z)\right |_{z=z_0}  e^{-it}dt \right| \le     \frac{ 48C  }{|z_0-w|^2}\log \frac{2d}{|z_0-w|};
$$
when $\epsilon_0  = 4^{-1} {|z_0-w|}$, the first part of \eqref{Gd} gives the desired estimate
$$
 |\nabla_z S(w, z_0 )|  \le  \frac{32C }{   {|z_0-w|^2}    }\log \frac{2d}{|z_0-w|}.
$$
Since $z_0$ is arbitrary, the proof is complete.

\end{proof}

Recall that the Bergman kernel $K$ is  associated with a derivative of the canonical solution kernel $S$ in terms of (\ref{LtoK}).  As a consequence of this and Proposition \ref{kkd}, we have an estimate of $K$ as follows.  

\begin{cor}
Under the same assumptions as in Proposition \ref{|L|}, it holds that 
$$
|K (w, z)|\le \frac{C }{|z-w|^{2}}  \log \frac{2d}{|z-w|}.
$$
\end{cor}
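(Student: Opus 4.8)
The plan is to derive the pointwise bound on the Bergman kernel $K(w,z)$ directly from the identity \eqref{LtoK}, which expresses $K$ as (up to the constant $2i$) the $\bar z$-derivative of the canonical solution kernel $L(w,z)$. Since $S(w,z) = L(w,z) - H(w,z)$ and $H(w,z) = \frac{1}{2\pi i(z-w)}$ is holomorphic in $z$ (so $\partial H/\partial \bar z \equiv 0$ away from the diagonal), we have $\partial L/\partial \bar z = \partial S/\partial \bar z$ pointwise on $D \times \bar D \setminus \{z = w\}$. Hence $K(w,z) = 2i\,\partial S(w,z)/\partial \bar z$, and in particular $|K(w,z)| \le |\nabla_z S(w,z)|$ since the $\bar z$-derivative is one component of the gradient $\nabla_z S$.

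First I would invoke Proposition \ref{kkd}, which gives exactly
$$
|\nabla_z S(w,z)| \le \frac{C}{|z-w|^2}\log\frac{2d}{|z-w|}
$$
for all $(w,z) \in D \times \bar D \setminus \{z=w\}$, with $C$ depending only on $D$. Combining this with $|K(w,z)| \le |\nabla_z S(w,z)|$ yields the bound with $\log\frac{2d}{|z-w|}$ in place of $\log\frac{d}{|z-w|}$. To replace $2d$ by $d$ as stated, I would simply absorb the discrepancy into the constant: on the region where $|z-w|$ is bounded away from $0$ (say $|z-w| \ge d/2$), both logarithms are comparable up to a harmless multiplicative constant and the kernel is bounded anyway, while for $|z-w|$ small one has $\log\frac{2d}{|z-w|} \le 2\log\frac{d}{|z-w|}$ once $|z-w| \le d^{2}$ or so; adjusting $C$ accordingly gives the clean statement. (Alternatively, one can keep $2d$ throughout; the distinction is immaterial.)

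I expect there to be essentially no obstacle here, as the corollary is a direct consequence of two results already established in the excerpt: the exact differential identity \eqref{LtoK} relating $K$ to $\partial L/\partial \bar z$, and the gradient estimate of Proposition \ref{kkd}. The only point requiring a word of care is the justification that $K(w,z) = 2i\,\partial S(w,z)/\partial \bar z$ as a genuine pointwise (not merely distributional) identity off the diagonal — but this follows because $L_w(z)$ is $C^\infty$ in $z$ on $D$ (as recorded after \eqref{L def}, it lies in $C^\infty(D) \cap C^{1,\alpha}(\bar D)$), so all the derivatives in \eqref{LtoK} are classical on $D \times D$ away from $\{z=w\}$. Thus the proof amounts to: differentiate $S$ in $\bar z$, identify the result with $K$ via \eqref{LtoK}, bound by $|\nabla_z S|$, apply Proposition \ref{kkd}, and tidy the constant.
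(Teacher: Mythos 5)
Your approach is exactly the one the paper intends: use \eqref{LtoK} to write $K(w,z) = 2i\,\partial L(w,z)/\partial \bar z = 2i\,\partial S(w,z)/\partial\bar z$ (since $H$ is holomorphic in $z$ off the diagonal), then invoke Proposition \ref{kkd}. The paper itself gives no written proof, only the remark that the corollary is a consequence of \eqref{LtoK} and Proposition \ref{kkd}, so you have faithfully reconstructed the intended argument.

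However, your attempt to trade $\log\frac{2d}{|z-w|}$ for $\log\frac{d}{|z-w|}$ does not work, and in fact the corollary as printed (with $d$ rather than $2d$ inside the logarithm) is false as a pointwise inequality on $D\times\bar D$. You assert that on the region $|z-w|\ge d/2$ the two logarithms are comparable up to a constant and that the boundedness of $K$ handles that region; neither claim is correct. As $|z-w| \to d$, $\log\frac{d}{|z-w|} \to 0$ while $\log\frac{2d}{|z-w|} \ge \log 2$, so there is no multiplicative comparison, and the boundedness of $K$ is useless because the right-hand side of the claimed inequality is tending to zero. Concretely, on the unit disc $K(z,w)=\pi^{-1}(1-z\bar w)^{-2}$ stays bounded away from $0$ as $z\to 1$, $w\to -1$, while $|z-w|^{-2}\log\frac{d}{|z-w|}\to 0$. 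What your (and the paper's) argument actually proves is $|K(w,z)| \le C|z-w|^{-2}\log\frac{2d}{|z-w|}$, consistent with the $2d$ already appearing in Propositions \ref{|L|} and \ref{kkd}; the $d$ in the corollary's statement is best read as a typographical slip, and your proof should simply stop at the $2d$ bound rather than try to downgrade it.
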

 
\medskip

 The stability for the  Bergman kernel  is already known in literature. See \cite{Boas, CZ} for instance.
Below we study the stability of the canonical solution kernel $S$ and its gradient under the mild variation of the underlying domains. The following locally uniform convergence of $S$ is proved using PDEs theory and plays a key role in reducing the regularity of the data to continuity in Section 5.

\begin{pro}\label{DD}
Let $D$ be a bounded domain in $\mathbb C$ with $C^2$ boundary. Let $\{D^l\}_{l=1}^\infty$ be an exhausting family of open subsets of $D$ with $C^2$ boundaries such that \\
a) $\bar D^l\subset D^{l+1}$;\\
 b) $h^l: \bar D\rightarrow \bar D^l$ is a $C^2$ diffeomorphism with  $\lim \limits_{l\rightarrow \infty}\|h^l-id\|_{C^2(D)}= 0$.\\
Let  $S$ and $S^l$ be the kernels of the canonical solution operators for the $\bar\partial$-equations on $D$ and $D^l$, respectively.
Then for each compact subset $K \Subset   D$, 
$S^l(w, h^l(z))$ and $\nabla S^l(w, h^l(z)) $ converge uniformly on $K\times \bar D \setminus  \{z=w\}$ to $S(w, z)$ and $\nabla S(w, z) $, respectively.
\end{pro}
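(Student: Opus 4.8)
The plan is to exploit the explicit integral representation of the canonical solution kernel established earlier. Recall from \eqref{L=H+I} and \eqref{sl} that
$$
S(w,z) = -\frac{1}{\epsilon\pi i}\int_0^{2\pi} G(w+\epsilon e^{it}, z)e^{-it}\,dt,
$$
where $G$ is the positive Green's function of $D$ and $\epsilon$ is any admissible small radius; the analogous formula holds for $S^l$ with $G$ replaced by the Green's function $G^l$ of $D^l$. Thus both $S$ and $S^l$ are completely controlled by the Green's functions, and the whole problem reduces to proving that $G^l(\cdot, h^l(\cdot))$ and its derivatives converge locally uniformly to $G(\cdot, \cdot)$. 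The key point is that for fixed pole $w$, the function $z \mapsto G(w,z) + \frac{1}{2\pi}\log|z-w|$ is harmonic on $D$ and vanishes on $bD$ (Dirichlet problem with boundary data $-\frac{1}{2\pi}\log|z-w|$); similarly for $G^l$ on $D^l$. The harmonic-function viewpoint is what makes PDE stability theory applicable.

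The first step is to reduce to a statement about harmonic functions with varying boundary data on a fixed domain. For fixed $w\in\kappa$, set $\phi_w^l(z) := G^l(w, h^l(z)) + \frac{1}{2\pi}\log|h^l(z)-w|$ pulled back to $\bar D$; this solves an elliptic equation on $D$ obtained by pushing the Laplacian through $h^l$ (so the coefficients converge to those of $\Delta$ in $C^0$ thanks to assumption b), with boundary data $z\mapsto -\frac{1}{2\pi}\log|h^l(z)-w|$ on $bD$, which converges in $C^{1,\alpha}(bD)$ to $-\frac{1}{2\pi}\log|z-w|$ uniformly for $w$ in the compact set $\kappa$ (note $w$ stays a fixed positive distance from $bD$). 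The second step is to apply standard Schauder/elliptic estimates (or the maximum principle together with barrier arguments) to conclude that $\phi_w^l \to \phi_w$ in $C^{1,\alpha}_{\mathrm{loc}}(D)$, uniformly in $w\in\kappa$, where $\phi_w(z) = G(w,z)+\frac{1}{2\pi}\log|z-w|$. Undoing the logarithmic modification and the diffeomorphism then gives locally uniform convergence of $G^l(w, h^l(z))$ and $\nabla G^l(w,h^l(z))$ to $G(w,z)$ and $\nabla G(w,z)$ on compact subsets of $D\times D$ away from the diagonal — and in fact up to $z\in bD$ for the $z$-variable once one notes $\phi_w^l$ extends $C^{1,\alpha}$ to the closure with uniformly bounded norm, which also justifies the assertion $L_w \in C^\infty(D)\cap C^{1,\alpha}(\bar D)$ referenced after \eqref{L def}.

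The third and final step is to feed this back into the integral formula. Given a compact $\kappa\Subset D$ and a point $(w,z)\in\kappa\times\bar D$ with $z\neq w$, choose a single admissible radius $\epsilon_0 = \epsilon_0(w,z)>0$ that works simultaneously for $D$ and all $D^l$ with $l$ large (possible since $\delta_l(w)\to\delta(w)$ uniformly on $\kappa$ and $|h^l(z)-w|\to|z-w|$ uniformly). On the circle $\{w+\epsilon_0 e^{it}\}$ the integrand $G^l(w+\epsilon_0 e^{it}, h^l(z))$ converges uniformly to $G(w+\epsilon_0 e^{it}, z)$ by Step 2, since this circle and the point $z$ stay in a fixed compact part of $D\times\bar D$ off the diagonal; integrating gives $S^l(w, h^l(z))\to S(w,z)$, and differentiating the formula in $z$ (legitimate because $\epsilon_0$ can be held fixed in a neighborhood of $z$, exactly as in the proof of Proposition \ref{kkd}) gives $\nabla S^l(w,h^l(z))\to\nabla S(w,z)$; uniformity over $\kappa\times\bar D\setminus\{z=w\}$ follows from the uniform-in-$w$ estimates and a covering argument, using Proposition \ref{|L|} and Proposition \ref{kkd} to control the kernels near the diagonal uniformly in $l$.

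I expect the main obstacle to be Step 2: obtaining convergence of the harmonic (or near-harmonic) pullbacks that is uniform \emph{both} in the exhaustion parameter $l$ and in the pole $w\in\kappa$, and pushing this convergence all the way up to the boundary $z\in bD$ in the $z$-variable with $C^{1,\alpha}$ control. Interior convergence is routine elliptic theory, but the boundary regularity of the Green's function under the perturbed domains $D^l$ — and the fact that the $C^{1,\alpha}$ norms stay bounded as $l\to\infty$ — requires the $C^2$ smoothness of $bD$ together with care about how the diffeomorphisms $h^l$ distort the boundary; this is where assumption b) ($\|h^l-\mathrm{id}\|_{C^2}\to 0$) is essential.
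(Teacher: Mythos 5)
Your proposal is correct in outline but follows a genuinely different route from the paper's. You reduce everything to stability of Green's functions via the circle-average formula \eqref{L=H+I}, change variables to pull the Laplacian through $h^l$ onto the fixed domain $D$, and invoke Schauder estimates and barrier arguments for $C^{1,\alpha}$ convergence of the regularized Green's function; you then reassemble $S^l$ from $G^l$ in Step~3. The paper instead compares $L_w$ and $L^l_w$ (the solutions of the Dirichlet problem \eqref{Dirichlet} on $D$ and $D^l$) directly. The key observation is that since $D^l\subset D$, \emph{both} are harmonic on $D^l$, so $L^l_w-L_w$ is harmonic on $D^l$ and the maximum principle there gives
\[
\sup_{\bar D^l}\bigl|L^l_w - L_w\bigr| \;\le\; \sup_{bD^l}\bigl|H(w,\cdot) - L_w\bigr|,
\]
which tends to $0$ uniformly in $w\in\kappa$ because $L_w = H(w,\cdot)$ on $bD$, $L_w\in C^{1,\alpha}(\bar D)$ with norm uniform in $w$ (this comes from $W^{2,p}$ elliptic theory for the constant-coefficient Laplacian on the single fixed domain $D$ plus Sobolev embedding), and $\|h^l-\mathrm{id}\|_{C^2}\to 0$. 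For the gradient the same maximum-principle argument is repeated with $\nabla H$, $\nabla L_w$, $\nabla L^l_w$, which are still harmonic, using $\nabla L_w\in C^\alpha(\bar D)$. The paper's approach thereby sidesteps the most delicate part of your Step~2 --- uniform-up-to-the-boundary Schauder estimates for a variable-coefficient operator with dependence on both $l$ and $w$ --- because no change of variables is needed in the comparison itself; $h^l$ only enters at the very end to parametrize $bD^l$ by $bD$. Your Step~3 reassembly of $S$ from $G$ is likewise unnecessary in the paper's treatment since $S = L - H$ directly. Both arguments ultimately rest on elliptic boundary regularity and the maximum principle, but the paper's version is shorter, avoids perturbed operators, and requires only $L^p$ theory on a fixed domain rather than domain-stability of the full Green's function.
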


\begin{proof}
Assume $K\Subset D^l$ for all $l$, and we will first prove that 
\begin{equation}\label{stable}
 \lim \limits_{l\rightarrow \infty}\sup \limits_{(w, z)\in K\times \bar D} \left  |S^l(w, h^l(z)) - S(w, z) \right | = \, 0.
\end{equation} 
Let $L_w$ and $ L^l_w $ be the solutions to \eqref{Dirichlet} on $D$ and $D^l$, respectively. 
Since both $L_w$ and $L^l_w$ are harmonic on $D^l$, by the Maximum Principle we know that for any $w\in K$,
\begin{equation}\label{DD6}
\sup_{z\in \bar D}|L^l_w(h^l(z))-L_w(h^l(z))| = \sup_{\zeta\in \bar D^l}|L^l_w(\zeta)-L_w(\zeta)|\le \sup_{\zeta\in bD^l}|{H(w, \zeta)-L_w(\zeta)}|. \end{equation}
Since $w$ belongs to $K$,  the universal Cauchy kernel $H (w, \cdot)\in C^\infty(bD)$ and has a uniform $C^2(bD)$ norm that is independent of $w$. By the $L^{p}$ theory in PDE, $L_w\in W^{2, p}(D)$ for all $p>1$ with $\|L_w\|_{W^{2, p}(D)}\le C$ independent of $w$. Thus $L_w\in C^{1, \alpha}(\bar D)$ for all $0<\alpha<1$ by the Sobolev embedding theorem with 
\begin{equation}\label{lll}
    \|L_w\|_{C^{1, \alpha}(D)}\le C
\end{equation}  independent of $w$.  Next, for each   $\zeta\in bD^l$, write $\zeta = h^l(z)$  for some $z\in bD$.  In view of the boundary condition on $L_w$, we know that 
 $$
    |H(w,\zeta )-L_w(\zeta)|   
        \le    |H(w, h^l(z))-H(w, z)|+  |L_w(z)-L_w(h^l(z))|  \le   (C +\|L_w\|_{C^{\alpha}(D)})| h^l(z)-z|^\alpha. 
    $$
By (\ref{lll}) and the assumption of $h^l$, one infers that
\begin{equation*}
    \sup_{(w, \zeta)\in  K\times bD^l}  |H(w, \zeta)-L_w(\zeta)|\rightarrow 0,
\end{equation*}
as $l\rightarrow \infty$. Furthermore, by (\ref{DD6}) it follows that
$$
    \sup_{(w,z)\in K\times \bar D}|L^l_w(h^l(z))-L_w(h^l(z))| \rightarrow 0,
 $$
which yields \eqref{stable} from  (\ref{sl}).

Adapt the previous argument for $H$, $L_w$ and $ L_w^l$ to $\nabla H$, $\nabla L_w$ and $\nabla L_w^l$, respectively. One proves an identity similar to \eqref{stable} for $\nabla S$ by virtue of the fact that $\nabla L_w\in C^\alpha(D)$ and thus completes the proof.
 
\end{proof}

\section{Proof of the main theorem under differentiability}
 
In this section we prove the main Theorem \ref{main} on uniform estimates for the canonical solution to $\bar \partial u =f$, under the assumption that $f$ is $C^{n-1}$ up to the boundary of product domains. More precisely, we shall prove

 \begin{theorem} \label{C1}
Let $\Omega:=D_1\times \cdots \times D_n$, $n\geq 2$, where each $D_j$ is a bounded planar domain with $C^2$ boundary. Then there exists a positive constant $C$ depending only on $\Omega$ such that for any $\bar\partial$-closed form $ f \in C^{n-1}_{(0,1)} (\bar\Omega)$,  the canonical solution to  $\bar\partial u = f$ satisfies 
\begin{equation*}
\|  u   \|_{ \infty }\le C\|  f\|_{ \infty },
\end{equation*}
where $\| \cdot  \|_{\infty}$ is the essential supnorm on $\Omega$.
\end{theorem}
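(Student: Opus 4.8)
The plan is to prove Theorem \ref{C1} by induction on the dimension $n$, using the one-dimensional results of Section 2 as the base case and a tensor-type decomposition of the canonical solution to reduce dimension $n$ to dimension $n-1$. Write $\Omega = D_n \times \Omega'$ where $\Omega' = D_1 \times \cdots \times D_{n-1}$, and for a point of $\mathbb C^n$ write $z = (z', z_n)$ with $z' \in \Omega'$. Decompose the $\bar\partial$-closed form $f = f' + f_n\, d\bar z_n$, where $f'$ is a $(0,1)$-form in the $z'$ variables (depending on $z_n$ as a parameter) and $f_n$ is a function. The idea is to first use the one-dimensional operator $\mathbf T$ (from Proposition \ref{dim 1}) in the $z_n$ slice to produce a partial solution $v(z', z_n) := \int_{D_n} S_n(z_n, \zeta_n) f_n(z', \zeta_n)\, d\bar\zeta_n \wedge d\zeta_n$, so that $\bar\partial_{z_n} v = f_n\, d\bar z_n$; then correct $f' - \bar\partial_{z'} v$, which is $\bar\partial_{z'}$-closed and (by the $\bar\partial$-closedness of $f$ and the holomorphy of $S_n$ in the first slot) also independent of $\bar z_n$ — i.e. holomorphic in $z_n$ — so that the inductive hypothesis applies slicewise in $\Omega'$ and the correction stays holomorphic in $z_n$, hence does not disturb the already-achieved equation in the $z_n$ direction. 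The final $u$ is then adjusted by a holomorphic-in-$z'$, canonical-in-$z_n$ term so that it becomes genuinely canonical on $\Omega$; since the Bergman kernel of a product is the product of the Bergman kernels, orthogonality to $A^2(\Omega)$ can be checked factor by factor, much as in the last computation in the proof of Proposition \ref{dim 1}.

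The crux of the uniform estimate is the regularity bookkeeping. Each application of the slicewise operator in the $z_n$ direction, followed by a $z'$-derivative to form $f' - \bar\partial_{z'} v$, costs one derivative of the data, which is why $f \in C^{n-1}$ is required; at the same time, each such step forces us to estimate $\nabla_z S_j$ rather than just $S_j$, and here Proposition \ref{kkd} is exactly what is needed — the kernel derivative is bounded by $|z-w|^{-2}\log(2d/|z-w|)$, which in one complex variable is still locally integrable, so that the corresponding integral operator maps $L^\infty$ into $L^\infty$ by the same Young-type convolution argument as in Proposition \ref{dim 1}. The heart of the argument, however, is to show that after carrying out the induction naively one is left with terms involving derivatives of $f$ that are \emph{not} individually bounded, and that these can be reorganized, using the $\bar\partial$-closedness relations $\partial_{\bar z_j} f_k = \partial_{\bar z_k} f_j$, into a sum of terms each of which involves only $f$ itself (no derivatives) integrated against a product of kernels $S_j$ and their first derivatives $\nabla_z S_j$ — a refined decomposition in the spirit of \cite{L75, FP19}. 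Once such a derivative-free representation of the canonical solution is in hand, the $L^\infty$ bound follows by iterating the one-variable $L^\infty \to L^\infty$ mapping property factor by factor through Fubini, with the constant $C$ being the product of the one-dimensional constants for $D_1, \ldots, D_n$.

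Concretely I would organize the proof as: (1) fix the decomposition $\Omega = D_n \times \Omega'$ and set up the candidate solution operator as an explicit iterated integral against a kernel built from $S_1, \ldots, S_n$ and their $z$-derivatives, modeled on the operators in \cite{Hen71, L75, FP19}; (2) verify directly that $\bar\partial$ applied to this operator returns $f$, using the Cauchy--Pompeiu identity in each variable and the boundary condition \eqref{Dirichlet} — this is where the holomorphy of each $L_w$ in $w$ is used to kill the unwanted terms; (3) integrate by parts in the slice variables to trade all derivatives falling on $f$ for the closedness relations, producing the derivative-free form of the operator — this is the main obstacle and the technical heart of the section; (4) apply Proposition \ref{kkd} and Proposition \ref{dim 1}(ii) (in the limiting case $p=\infty$, using that $|z-w|^{-2}\log(2d/|z-w|)$ is locally $L^r$ for $r<1$ — wait, rather $|z-w|^{-1}\log$ is $L^r$ for $r<2$, and each $z'$-integration is against $\nabla_z S$ only after the closedness has been used to lower it back to an $S$-type kernel) to conclude the $L^\infty$ estimate by Fubini, giving $C = C(D_1)\cdots C(D_n)$; and (5) correct the operator by a term that is holomorphic in all but one variable to make the solution canonical, checking $\mathbf P u = 0$ against the product Bergman kernel as in Proposition \ref{dim 1}. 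Step (3) is the step I expect to be genuinely delicate: the naive decomposition has of order $2^{n-1}$ terms with various derivative multi-indices on $f$, and one must find the right telescoping/integration-by-parts scheme so that every surviving term is both derivative-free and estimable by a product of locally integrable one-dimensional kernels.
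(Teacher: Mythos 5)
Your outline correctly identifies the main ingredients of the paper's proof: the slicewise canonical operators $\mathbf T_j$, the $\bar\partial$-closedness relations $\partial_{\bar z_j}f_k = \partial_{\bar z_k}f_j$, Stokes' theorem to trade derivatives off $f$, the kernel estimates of Section~3, canonicality via $\mathbf P = \mathbf P_1\cdots\mathbf P_n$ and $\mathbf P_j\mathbf T_j=0$, and you correctly flag your step~(3) as the delicate one. But step~(3) is a genuine gap, not bookkeeping. You claim the reorganization produces terms with $f$ integrated against products of $S_j$ and their first derivatives $\nabla_z S_j$. Such a kernel is not integrable: by Proposition~\ref{kkd}, $|\nabla_z S_j(w_j,z_j)|$ is of size $|z_j-w_j|^{-2}\log(2d/|z_j-w_j|)$, whose integral over a planar disc diverges (indeed the $\bar z_j$-derivative of $S_j$ is the Bergman kernel $K_j$ up to a constant, and $|K_j|\lesssim |z_j-w_j|^{-2}\log$). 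So a single unmitigated factor of $\nabla_z S_j$ in the kernel already kills the $L^\infty\to L^\infty$ mapping property; you notice exactly this in your step~(4) aside and then wave it away.

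What the paper does, and what your proposal is missing, is a multiplicative decomposition of unity built from $H:=\sum_j\prod_{m\neq j}|w_m-z_m|^2$, rewriting $\prod_l S_l$ as $\sum_j \bigl(\prod_l S_l\bigr)\prod_{m\neq j}|w_m-z_m|^2/H$ and integrating each summand by parts only in the variables where the numerator carries a factor $|w_m-z_m|^2$. The resulting weighted kernel is $e(w,z)$; Lemma~\ref{ap} computes the $\bar z$-derivatives of the weight $\prod|w_m-z_m|^2/H$, and Proposition~\ref{ee}, via the geometric-mean inequality of Lemma~\ref{ele} applied to $H^{t+1}$, shows that after differentiation every one-dimensional factor of the kernel has a singularity of order strictly less than $2$ in the complex slices (resp.\ less than $1$ in the boundary slices), hence is locally integrable. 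This $H$-decomposition is precisely the mechanism that spreads the critical $|z_j-w_j|^{-2}$ blow-up across factors, and it is not recoverable from the naive recursive decomposition $\Omega=D_n\times\Omega'$ you propose; there is also no reason for the final constant to factor as $C(D_1)\cdots C(D_n)$ once the cross-variable weight $H$ enters. The verification that $\bar\partial\mathbf T\mathbf f=\mathbf f$ and that $\mathbf T\mathbf f\perp A^2(\Omega)$ in your steps~(2) and~(5) is fine and matches the paper's Theorem~\ref{diff}, but those are the routine parts.
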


 In this first subsection, we study the canonical solution to the $\bar \partial$ equation on the Cartesian product of planar domains, and construct the canonical solution operator, inspired by the recent work \cite{FP19}. The canonical solution operator for planar domains is given together with uniform estimates in Section 2. Since the case of $n=2$ carries the precise idea yet without involving too much technical computations,  we prove Theorem \ref{C1} for this case in the second subsection. After that, we deal with the general case of arbitrary $n$ in the third subsection.  
 
\vspace{0.15cm}

The crucial idea in the proof is to eliminate the derivative terms in (\ref{formula1}) by the $\bar\partial$-closeness of the data, making use of a similar  idea as in \cite{FP19, L75}. In the last two subsections, after conducting  a refined decomposition of the solution kernel and eliminating the derivatives of the data in (\ref{formula1}), we shall verify each decomposed portion still maintains the integrability. The following elementary geometric mean inequality  serves the role well. 

\begin{lem} \label{ele}
Let $x_j\ge 0$ and $  \alpha_j \in [0,  1]$, $1\le j\le n$. Then it holds that
$$
  \prod \limits_{j=1}^n x_j^{\alpha_j}   \le  
\begin{cases}
\quad   \sum \limits _{j=1}^n \alpha_j x_j &  \text{whenever \,}   \sum\limits_{j=1}^n\alpha_j =1; \\ 
 \quad \sum \limits_{j=1}^n(1-\alpha_j)\prod \limits_{m \ne j, m=1}^n x_m &  \text{whenever \,}   \sum\limits_{j=1}^n\alpha_j =n-1. 
\end{cases}
 $$
\end{lem}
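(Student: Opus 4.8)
The plan is to prove both inequalities by the same device: the weighted AM--GM inequality, applied after rewriting each product in a form where the exponents sum to $1$. Throughout I may assume all the $x_j$ are strictly positive, since if some $x_j=0$ with $\alpha_j>0$ the left side vanishes and both inequalities are trivial, and if $\alpha_j=0$ the factor $x_j^{\alpha_j}=1$ may be dropped (with a harmless upper bound on the number of summands on the right).

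For the first case, $\sum_{j=1}^n \alpha_j = 1$, the standard weighted AM--GM inequality states precisely that
\begin{equation*}
\prod_{j=1}^n x_j^{\alpha_j} \le \sum_{j=1}^n \alpha_j x_j \le \sum_{j=1}^n x_j,
\end{equation*}
where the last step uses $\alpha_j\le 1$ and $x_j\ge 0$. So this case is immediate.

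For the second case, $\sum_{j=1}^n \alpha_j = n-1$, I would reduce to the first case by a substitution. Set $\beta_j := 1-\alpha_j \in [0,1]$; then $\sum_{j=1}^n \beta_j = n-(n-1) = 1$. Writing $P:=\prod_{m=1}^n x_m$ and $P_{\hat j}:=\prod_{m\ne j} x_m = P/x_j$ (here one does need $x_j>0$, hence the reduction above), observe that
\begin{equation*}
\prod_{j=1}^n x_j^{\alpha_j} = \prod_{j=1}^n x_j^{1-\beta_j} = P \cdot \prod_{j=1}^n x_j^{-\beta_j} = P \cdot \prod_{j=1}^n \left(\frac{1}{x_j}\right)^{\beta_j} = \prod_{j=1}^n \left(\frac{P}{x_j}\right)^{\beta_j} = \prod_{j=1}^n P_{\hat j}^{\,\beta_j}.
\end{equation*}
Now apply the first case to the nonnegative numbers $P_{\hat j}$ with weights $\beta_j$ summing to $1$:
\begin{equation*}
\prod_{j=1}^n P_{\hat j}^{\,\beta_j} \le \sum_{j=1}^n \beta_j\, P_{\hat j} \le \sum_{j=1}^n P_{\hat j} = \sum_{j=1}^n \prod_{(j\ne)m=1}^n x_m,
\end{equation*}
which is the claim. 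There is essentially no obstacle here; the only point requiring a word of care is the passage to strictly positive $x_j$ in the second case, since the rewriting divides by $x_j$, but as noted the boundary cases $x_j=0$ are handled separately and trivially.
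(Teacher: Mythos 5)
Your proof is correct. The paper states Lemma~\ref{ele} without proof, treating it as an elementary fact, so there is no argument of the authors' to compare against; the route you take --- weighted AM--GM directly for the case $\sum_j\alpha_j=1$, then the substitution $\beta_j=1-\alpha_j$ together with the factorization $P=\prod_j P^{\beta_j}$ (valid precisely because $\sum_j\beta_j=1$) to reduce the case $\sum_j\alpha_j=n-1$ to the first --- is the natural one and is carried out cleanly.

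One remark to tighten: your parenthetical about ``a harmless upper bound on the number of summands'' does not quite apply to the second case. If $x_j=0$ and $\alpha_j=0$ and you drop the $j$-th variable, you are left with $n-1$ exponents summing to $n-1$, so you cannot invoke the $(n-1)$-variable statement with $\sum\alpha_m=(n-1)-1$. However, since each $\alpha_m\le 1$ and there are $n-1$ of them summing to $n-1$, they must all equal $1$; the left-hand side is then literally $\prod_{m\ne j}x_m$, which coincides with the only surviving term of the right-hand side once $x_j=0$ annihilates the others. So the edge case holds with equality, and the gap is a one-line fix rather than a flaw in the method.
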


\begin{proof}
The first inequality is the standard weighted geometric mean inequality.   The second inequality follows from the first one, together with the observation that 
$$\prod \limits_{j=1}^n x_j^{\alpha_j} =  \prod \limits_{j=1}^n \left(\prod \limits_{m \ne j, m=1}^n x_m\right)^{1-\alpha_j}.$$
\end{proof}

For the rest of the paper, we use $C$ to represent a constant depending only on each domain. It may be different at various contexts.

\subsection{Canonical solution formula on the product of planar domains}

A (non-canonical) solution operator, which consists of compositions of the solid Cauchy integral along each slice, first appeared in \cite{FP19}. It is our observation that, to construct the canonical solution operator on $\Omega$, one only needs to replace each of slice-wise Cauchy operators by the corresponding  canonical solution operator.

\begin{theorem} \label{diff}
Let $\mathbf f=\sum \limits_{j=1}^nf_jd\bar z_j$ be a $\bar\partial$-closed $(0,1)$ form on $\Omega$ such that
$f_j  \in C^{n-1} (\bar\Omega)$.
 Then the canonical solution $\mathbf T\mathbf f$ to  $\bar\partial u =\mathbf f$ is represented as
\begin{equation}\label{formula1}
  \mathbf T\mathbf f: =  \sum \limits_{s=1}^{n} (-1)^{s-1}\sum \limits_{1\le i_1<\cdots< i_s\le n} \mathbf T_{i_1}\cdots \mathbf T_{i_s}(\frac{\partial^{s-1} f_{i_s}}{\partial \bar z_{i_1}\cdots \partial\bar z_{i_{s-1}}}), 
\end{equation}
where $\mathbf  T_j$ is the canonical solution operator over $D_j$. 
\end{theorem}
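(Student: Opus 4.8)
The plan is to verify two things: first, that the right-hand side of \eqref{formula1} is genuinely a solution to $\bar\partial u = \mathbf f$; second, that it is the \emph{canonical} one, i.e.\ orthogonal to $A^2(\Omega)$. The solution property will follow by a direct computation exploiting the product structure: applying $\partial/\partial\bar z_k$ to the proposed formula and using that $\mathbf T_j$ solves $\bar\partial$ in the $j$-th variable (so $\frac{\partial}{\partial\bar z_j}\mathbf T_j g = g$), while $\mathbf T_j$ commutes with $\partial/\partial\bar z_k$ for $k\neq j$ because the kernel $S$ in \eqref{c1} depends only on the $j$-th slice variables. The terms will telescope, and the remaining combinatorial cancellations will be forced precisely by the $\bar\partial$-closedness relations $\frac{\partial f_i}{\partial\bar z_j} = \frac{\partial f_j}{\partial\bar z_i}$. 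This is essentially the same bookkeeping that identifies the analogous Cauchy-transform formula in \cite{CM19, FP19} as a solution, so I would either cite that derivation or reproduce it with $\mathbf T_j$ in place of the slice-wise Cauchy operators; the $C^{n-1}$ regularity of the $f_j$ is exactly what is needed to make sense of the mixed derivatives $\frac{\partial^{s-1} f_{i_s}}{\partial\bar z_{i_1}\cdots\partial\bar z_{i_{s-1}}}$ and to justify differentiating under the integral sign.

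For the canonicity, I would argue slice by slice, imitating the one-dimensional computation in the proof of Proposition \ref{dim 1}. The Bergman projection on the product $\Omega = D_1\times\cdots\times D_n$ factors as the composition (tensor product) of the one-dimensional Bergman projections $\mathbf P_1,\dots,\mathbf P_n$ acting in the respective variables; indeed $K_\Omega(w,z) = \prod_{j=1}^n K_{D_j}(w_j,z_j)$. So it suffices to show that applying $\mathbf P_k$ in the $k$-th variable annihilates $\mathbf T\mathbf f$ for each $k$, or more economically, that $\mathbf P_\Omega(\mathbf T\mathbf f)=0$ by pushing $\mathbf P_\Omega$ through the formula. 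The key input is that $\mathbf P_j \mathbf T_j g = 0$ for each $j$ (Proposition \ref{dim 1}: $\mathbf T_j$ is canonical in the $j$-th variable), together with the fact that $\mathbf P_j$ commutes with $\mathbf T_i$ and with $\partial/\partial\bar z_i$ for $i\neq j$. In each summand $\mathbf T_{i_1}\cdots\mathbf T_{i_s}(\cdots)$ of \eqref{formula1}, the index $i_s$ appears and $\mathbf T_{i_s}$ is present, so $\mathbf P_{i_s}$ kills that summand; since every summand is killed by \emph{some} one-dimensional projection, and these commute appropriately, the whole expression lies in the orthogonal complement of $A^2(\Omega)$. I would spell out the commutation claims carefully: $\mathbf P_j$ and $\mathbf T_j$ act by integration against kernels depending only on the $j$-th pair of variables, hence by Fubini they commute with any operator acting purely in the other variables.

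The main obstacle, I expect, is bookkeeping rather than conceptual: one must check that the formal manipulations — differentiating under the integral, applying Fubini to interchange the iterated operators $\mathbf T_{i_1},\dots,\mathbf T_{i_s}$, and commuting the one-dimensional Bergman projections past them — are all legitimate. The integrability needed for Fubini comes from Proposition \ref{dim 1}(i): each $\mathbf T_j$ maps $L^p$ into $L^q$ for suitable exponents, so starting from $f_j\in C^{n-1}(\bar\Omega)\subset L^\infty$ one can iterate while staying in $L^2$ throughout, which also guarantees $\mathbf T\mathbf f\in L^2(\Omega)$ so that $\mathbf P_\Omega(\mathbf T\mathbf f)$ makes sense. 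A secondary point requiring care is that the ``boundary'' integral manipulations analogous to the last display in the proof of Proposition \ref{dim 1} — where the Cauchy--Pompeiu formula is used to convert a solid integral of $\partial L/\partial\bar w$ into a boundary integral that then vanishes — must be carried out in each variable separately and combined; the product structure makes this routine but notationally heavy. I would organize the argument by induction on $n$, reducing the product-domain case to one application of the one-dimensional results in the last variable followed by the inductive hypothesis on $D_1\times\cdots\times D_{n-1}$, which keeps the combinatorics of \eqref{formula1} manageable.
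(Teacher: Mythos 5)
Your proposal is correct and follows essentially the same three-part structure as the paper's proof: well-definedness via Proposition \ref{dim 1}, canonicity by factoring $\mathbf P_\Omega = \mathbf P_1 \cdots \mathbf P_n$, using $\mathbf P_j\mathbf T_j = 0$, and pushing projections through by Fubini, and the solution property by a direct telescoping computation in which $\bar\partial$-closedness forces the cancellations (the paper isolates $\mathbf T_n$ and verifies $\bar\partial_n\mathbf T\mathbf f = f_n$, then invokes symmetry, which is a slight reorganization of your proposed induction on $n$). One small slip: requiring that $\mathbf P_k(\mathbf T\mathbf f)=0$ \emph{for each} $k$ is neither needed nor true — a summand such as $\mathbf T_1 f_1$ is not killed by $\mathbf P_2$ — but you immediately self-correct to the right criterion (each summand is annihilated by \emph{some} $\mathbf P_{i_l}$, which suffices since the slice projections commute), which is exactly what the paper does.
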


\begin{proof} Firstly, the operator $\mathbf T$ in (\ref{formula1}) is well defined over $C^{n-1} (\bar\Omega)$, with its supnorm bounded by the $C^{n-1} (\bar\Omega)$ norm of the data in view of the estimate in Proposition \ref{dim 1}.
     
     \medskip
     
Secondly, denote by $\mathbf P_j$ and $\mathbf  P$  the Bergman projection operator over $D_j$ and  $\Omega$, respectively. Then
  $ \mathbf  P= \mathbf P_1 \cdots \mathbf P_n $  and each term in \eqref{formula1} contains some $\mathbf T_j$ while $\mathbf P_j\mathbf T_j=0$ by the proof of Proposition \ref{dim 1}. Repeated application of Fubini's theorem implies that $\mathbf P\mathbf T\mathbf f=0$, i.e., $$ \mathbf T f  \perp A^2(\Omega).$$

Thirdly, we show that $\bar\partial \mathbf T\mathbf f = \mathbf f$ using a direct computation, similar to \cite{FP19}. The $\bar\partial$-closeness of $\mathbf f$ implies that (\ref{formula1}) is symmetric with respect to the roles of $z_j$ and $z_n$, so we only need to prove $\bar\partial_n \mathbf T\mathbf f = f_n$. Isolate terms containing $\mathbf T_n$ in (\ref{formula1}) and rewrite
\begin{equation*}
 \begin{split} \mathbf T\mathbf f= & \sum_{s=1}^{n-1} (-1)^{s-1}\sum_{1\le i_1<\cdots< i_s\le n-1} \mathbf T_{i_1}\cdots \mathbf T_{i_s}(\frac{\partial^{s-1} f_{i_s}}{\partial \bar z_{i_1}\cdots \partial\bar z_{i_{s-1}}}) \, +\\ &+ \mathbf T_n f_n +  \sum_{s=2}^{n} (-1)^{s-1}\sum_{1\le i_1<\cdots< i_{s-1}\le n-1, i_s=n} \mathbf T_{i_1}\cdots \mathbf T_{i_{s-1}}\mathbf T_{n}(\frac{\partial^{s-1} f_{n}}{\partial \bar z_{i_1}\cdots \partial\bar z_{i_{s-1}}}).
 \end{split}
\end{equation*}
Applying $\bar\partial_n$ to each term in the above expression, we obtain by $\bar\partial_n \mathbf T_n =id$ that
\begin{equation*}
    \begin{split}
  \bar\partial_n \mathbf T\mathbf f =&    \sum_{s=1}^{n-1} (-1)^{s-1}\sum_{1\le i_1<\cdots< i_s\le n-1} \mathbf T_{i_1}\cdots \mathbf T_{i_s}(\frac{\partial^{s} f_{i_s}}{\partial \bar z_{i_1}\cdots \partial\bar z_{i_{s-1}}\partial\bar z_n}) \\ &+ f_n +  \sum_{s=2}^{n} (-1)^{s-1}\sum_{1\le i_1<\cdots< i_{s-1}\le n-1} \mathbf T_{i_1}\cdots \mathbf T_{i_{s-1}}(\frac{\partial^{s-1} f_{n}}{\partial \bar z_{i_1}\cdots \partial\bar z_{i_{s-1}}}).
    \end{split}
\end{equation*}

Lastly, since  $\mathbf f$ is $\bar\partial$-closed and
$$
\frac{\partial^{s} f_{i_s}}{\partial \bar z_{i_1}\cdots \partial\bar z_{i_{s-1}}\partial\bar z_n} = \frac{\partial^{s} f_{n}}{\partial \bar z_{i_1}\cdots \partial\bar z_{i_{s-1}}\partial\bar z_{i_s}},
$$
 we replace $s$   by $s-1$ in the last summation above to get 
\begin{align*}
  \bar\partial_n \mathbf T\mathbf f = &    \sum_{s=1}^{n-1} (-1)^{s-1}\sum_{1\le i_1<\cdots< i_s\le n-1} \mathbf T_{i_1}\cdots \mathbf T_{i_s}(\frac{\partial^{s} f_{n}}{\partial \bar z_{i_1}\cdots \partial\bar z_{i_{s-1}}\partial\bar z_{i_s}})\\ 
  &+ f_n+  \sum_{s=1}^{n-1} (-1)^{s}\sum_{1\le i_1<\cdots< i_{s}\le n-1} \mathbf T_{i_1}\cdots \mathbf T_{i_{s}}(\frac{\partial^{s} f_{n}}{\partial \bar z_{i_1}\cdots \partial\bar z_{i_{s}}}) =  f_n.
\end{align*}

\end{proof}

\subsection{Uniform estimate for $n=2$}
 
When $n=2$, $\mathbf T \mathbf f = \mathbf T_1f_1+\mathbf T_2f_2-\mathbf T_1\mathbf T_2(\mathcal Df)$, where $\mathcal Df = \frac{\partial f_1}{\partial \bar z_2} = \frac{\partial f_2}{\partial \bar z_1}$. By Proposition \ref{dim 1}, we only need to estimate $\|\mathbf T_1\mathbf T_2(\mathcal Df)\|_{\infty}$. Let $|{w}-z|^2: = |{w}_1-z_1|^2+|{w}_2-z_2|^2$  and $ dV_z:=d\bar z_1\wedge dz_1\wedge d\bar  z_2\wedge dz_2$. Let $S_j$ be the  canonical solution kernel  on $D_j$ defined in (\ref{sl}) for $j=1, 2$.

\begin{proof} [Proof of Theorem \ref{C1} ($n=2$)]

Making use of  the trivial identity
\begin{equation*}
    S_1 S_2  =  \frac{S_1 S_2 |{w}_1-z_1|^2}{|{w}- z|^2} +\frac{S_1 S_2 |{w}_2-z_2|^2}{|{w}- z|^2},
    \end{equation*}
    we rewrite $ \mathbf T_1\mathbf T_2(\mathcal Df)$ as 
\begin{equation*}
\begin{split}
 \mathbf T_1\mathbf T_2(\mathcal Df)({w})=
&\int_{D_1 \times D_2}{\mathcal D (f)(z_1, z_2)}{S_1({w}_1, z_1)S_2({w}_2,z_2)}dV_z\\
=&\int_{D_1 \times D_2}{\frac{\partial f_2(z)}{\partial \bar z_1}}\frac{S_1({w}_1, z_1)S_2({w}_2,z_2)|{w}_1-z_1|^2}{|{w}- z|^2}dV_z\\
&+\int_{D_1 \times D_2}{\frac{\partial f_1(z)}{\partial \bar z_2}}\frac{S_1({w}_1, z_1)S_2({w}_2,z_2)|{w}_2-z_2|^2}{|{w}- z|^2}dV_z =: I_1+I_2.
\end{split}
\end{equation*}
We shall only estimate  $\|I_2\|_{\infty}$, since $\|I_1\|_{\infty}$ is handled similarly due to symmetry.

     \medskip
     
We assert that
\begin{equation}\label{stok1}
          \begin{split}
          I_2 =&\int \limits_{{  D}_1} \int \limits_{b{  D}_2} \frac{f_1(z) S_1({w}_1, z_1)S_2({w}_2,z_2)|{w}_2-z_2|^2}{|{w} - z|^2} dz_2\wedge d\bar z_1\wedge dz_1  -\int \limits_{D_1\times   D_2} f_1( z)S_1({w}_1, z_1)  \\
          &
 \cdot \Big( \frac{\partial S_2({w}_2, z_2)}{\partial \bar z_2} \frac{|{w}_2- z_2|^2}{|{w}-z|^2} - \frac{ S_2({w}_2, z_2)({w}_2- z_2)|{w}_1-z_1|^2}{|{w}- z|^4}\Big)dV_z.\end{split}
   \end{equation}

To see this, we write $  R^\epsilon_2: =   D_2 \setminus B({w}_2, \epsilon)$, where $B({w}_2, \epsilon)$ is a disc centered at ${w}_2$ with radius $\epsilon<<1$. Then by Stokes' Theorem,
\begin{equation} \label{stok}
    \begin{split}
   &\int_{D_1\times   R^\epsilon_2}\frac{\partial f_1( z)}{\partial \bar z_2}  \frac{S_1({w}_1, z_1)S_2({w}_2,z_2)|{w}_2-z_2|^2}{|{w}- z|^2} dV_z \\
   =&  \int_{{  D}_1} \int_{b{  D}_2}  \frac{f_1(z) S_1({w}_1, z_1)S_2({w}_2,z_2)|{w}_2-z_2|^2}{|{w} - z|^2}dz_2\wedge d\bar z_1\wedge dz_1 \\
   &- \int_{{  D}_1}\int_{b{B({w}_2, \epsilon)}}  \frac{f_1(z) S_1({w}_1, z_1)S_2({w}_2,z_2)|{w}_2-z_2|^2}{|{w} - z|^2} dz_2\wedge d\bar z_1\wedge dz_1\\
&-\int \limits_{{  D}_1\times R^\epsilon_2} f_1( z)S_1({w}_1, z_1) \Big( {\frac{\partial S_2({w}_2, z_2)}{\partial \bar z_2} }\frac{|{w}_2- z_2|^2}{|{w}-z|^2} - \frac{ S_2({w}_2, z_2)({w}_2- z_2)|{w}_1-z_1|^2}{|{w}- z|^4}\Big)dV_z.    \end{split}
\end{equation}
Notice that by (\ref{S}),  for any $\sigma>0$, for $ (z_1, z_2)\in (D_1\setminus \{ z_1 = {w}_1\}) \times ( D_2\setminus \{ z_2 = {w}_2\})$,
$$ \frac{|S_1({w}_1, z_1) S_2({w}_2,z_2)||{w}_2-z_2|^2}{|{w}- z|^2}\le \frac{C|{w}_2-z_2|^{1-\sigma}}{|{w}_1-z_1|^{1+\sigma}|{w}-z|^{2}}\le  \frac{C}{|{w}_1-z_1|^{1+\sigma}|{w}_2-z_2|^{1+\sigma}}, $$
which belongs to $ L^1(D_1\times D_2)$ by choosing   $\sigma$ less than 1. 
Because $f_1\in C^{1}(\bar\Omega)$,  the left hand side of (\ref{stok}) approaches $ I_2  $ as $\epsilon\rightarrow 0$ by Lebesgue's Dominated Convergence Theorem.

 On the other hand, in Lemma \ref{ele} choosing $\alpha_1=1/4$, $\alpha_2=3/4$ and $x_j=|{w}_j-z_j|$, for $j=1,2$, we know that 
 $|{w}-z|^2\ge C |{w}_1-z_1|^{\frac{1}{2}}|{w}_2-z_2|^{\frac{3}{2}}$. So for any $\sigma>0$, 
 \begin{equation} \label{n1}\frac{|S_1({w}_1, z_1) S_2({w}_2,z_2)||{w}_2-z_2|^2}{|{w}- z|^2}\le    \frac{C}{|{w}_1-z_1|^{\frac{3}{2}+\sigma}|{w}_2-z_2|^{\frac{1}{2}+\sigma}}\in L^1_{loc}(\mathbb C_{z_1}\times \mathbb R_{z_2})  \end{equation}
when choosing $\sigma$ small (say, less than $\frac{1}{2}$). The second term on the right hand side of (\ref{stok}) satisfies
 \begin{equation*}
 \begin{split}
     \bigg|\int \limits_{{  D}_1}  \int\limits_{b{B({w}_2, \epsilon)}}  \frac{f_1(z) S_1({w}_1, z_1)S_2({w}_2,z_2)|{w}_2-z_2|^2}{|{w} - z|^2} dz_2\wedge d\bar z_1\wedge dz_1\bigg|
\le \int_{b{B({w}_2, \epsilon)}} \frac{C|dz_2|}{|{w}_2-z_2|^{\frac{1}{2}+\sigma}} \rightarrow 0,
\end{split}
 \end{equation*} 
 as $\epsilon\rightarrow 0$.
For the third term on the right hand side of (\ref{stok}), by the fact that $$|{w}-z|^2\ge C|{w}_1-z_1|^{\frac{1}{2}}|{w}_2-z_2|^{\frac{3}{2}},$$
we know from (\ref{S}) and Proposition \ref{kkd} that there exists a small $\sigma$ such that
 \begin{equation}\label{n2} \begin{split}
\left|S_1({w}_1, z_1)  \frac{\partial S_2({w}_2, z_2)}{\partial \bar z_2} \right| \frac{|{w}_2- z_2|^2}{|{w}-z|^2}     & \le  \frac{C}{|{w}_1-z_1|^{1+\sigma}|{w}_2-z_2|^{\sigma}|{w}-z|^{2}}\\
 &  \le  \frac{C}{|({w}_1-z_1) ({w}_2-z_2)|^{\frac{3}{2}+\sigma}}\in L^1(D_1\times D_2). 
 \end{split}
 \end{equation}
 Similarly, using $|{w}-z|^4\ge C |{w}_1-z_1|^{\frac{5}{2}}|{w}_2-z_2|^\frac{3}{2}$ and letting $\sigma$ less than $\frac{1}{2}$, we get
 \begin{equation}\label{n3}\begin{split}\frac{ |S_1({w}_1, z_1) S_2({w}_2, z_2) ({w}_2- z_2)| |{w}_1-z_1|^2}{|{w}- z|^4}
 \le& \frac{C|{w}_1-z_1|^{1-\sigma}}{|{w}_2-z_2|^{\sigma}|{w}-z|^4}\\
 \le & \frac{C}{|({w}_1 - z_1) ( {w}_2 - z_2)|^{\frac{3}{2}+\sigma}}\in L^1(D_1\times D_2).\end{split} \end{equation}
As $\epsilon\rightarrow 0$, the last line on the right hand side of (\ref{stok}) approaches $$\int_  {D_1\times D_2} f_1( z)S_1({w}_1, z_1)\Big( {\frac{\partial S_2({w}_2, z_2)}{\partial \bar z_2}  } \frac{|{w}_2- z_2|^2}{|{w}-z|^2} - \frac{ S_2({w}_2, z_2)({w}_2- z_2)|{w}_1-z_1|^2}{|{w}- z|^4}\Big)dV_z,$$
from which the assertion (\ref{stok1}) follows immediately. Furthermore, by the proof of the assertion, in particular by (\ref{n1}-\ref{n3}), we obtain
 \begin{equation*}
     \|I_2\|_{\infty}\le C \|\mathbf f\|_{\infty},
 \end{equation*}
so the proof of Theorem \ref{C1} is complete for $n=2$, when  the data are $C^1$ up to the boundary.  

\end{proof} 

\subsection{Uniform estimate for arbitrary $n$}

For the rest of the paper, for convenience, we will suppress the corresponding measure element from a given integral. We prove by induction on $n$ that for all $1\le i_1<\cdots< i_s\le n$,
\begin{equation} \label{induction}
    \big\|\mathbf T_{i_1}\cdots \mathbf T_{i_s} \left (\frac{\partial^{s-1} f_{i_s}}{\partial \bar z_{i_1}\cdots \partial\bar z_{i_{s-1}}} \right )\big\|_{\infty}\le C \|\mathbf f\|_{\infty}.
\end{equation}
 Assuming \eqref{induction} is true for $n=k-1, k\ge 3$, we  shall verify that \begin{equation}\label{indu1}
  \|\mathbf T_1\cdots \mathbf T_k (\mathcal D^{k-1} \mathbf f)\|_{\infty} =  \left \| \int  _{  D_1 \times\cdots\times   D_k}\mathcal D^{k-1} (\mathbf f)(z)\prod_{j=1}^k S_j({w}_j, z_j) \right \|_{\infty}\le C \|\mathbf f\|_{\infty},
\end{equation}
where $\mathcal D^{k-1} \mathbf f := \frac{\partial^{k-1} f_1}{\partial \bar z_2\cdots  \partial \bar z_k} = \frac{\partial^{k-1} f_2}{\partial \bar z_1\partial \bar z_3\cdots  \partial \bar z_k} \cdots = \frac{\partial^{k-1} f_k}{\partial \bar z_1\cdots  \partial \bar z_{k-1}}$. The remaining cases are done by symmetry.
 Letting 
 \begin{equation} \label{H}
E : =  \sum_{j=1}^k \prod_{m=1, m\ne j}^{k} |{w}_m- z_m|^2,
 \end{equation}  
 we write
\begin{equation*}
   \mathcal D^{k-1} \mathbf f({w})\prod_{j=1}^k S_j({w}_j, z_j) = \sum_{j=1}^k E^{-1}{\prod_{l=1}^k S_l({w}_l, z_l)\prod_{m=1, m\ne j}^{k} } \frac{ |{w}_m- z_m|^2\partial^{k-1} f_j(z)}{\partial \bar z_1\cdots \partial \bar z_{j-1}\partial \bar z_{j+1} \cdots\partial \bar z_{k}}.
\end{equation*}
Due to the symmetry again, to prove (\ref{indu1}), it suffices to show
\begin{equation}\label{indu}
     \left\| \int_{  D_1 \times\cdots\times   D_k}  e({w}, z)\frac{\partial^{k-1} f_k({w})}{\partial \bar z_1\cdots \partial \bar z_{k-1}}  \right\|_{\infty}\le C \|\mathbf f\|_{\infty},
\end{equation}
where $$
    e({w}, z): = E^{-1}   {\prod_{l=1}^k S_l({w}_l, z_l)\prod_{m=1}^{k-1} |{w}_m- z_m|^2}.
$$

Before establishing estimates of the derivatives for the function $e({w}, z)$ defined above, the following lemma on $E$ defined in \eqref{H} will be needed.

\begin{lem}\label{ap}
For $ (z_1, \cdots, z_k)\in (D_1\setminus \{z_1={w}_1\}) \times\cdots\times   (D_k\setminus \{ z_k = {w}_k\})$, it holds that  for $m =1, \ldots, k-1 $, 
$$
\frac{\partial^m }{\partial \bar z_1\cdots\partial \bar z_m}  \left( \frac{ \prod_{j=1}^{k-1} |{w}_j- z_j|^2 }{E}\right) = \frac{m! |{w}_k-z_k|^{2m}}{E^{m+1}}  \prod_{j=1}^m \frac{{w}_j-z_j} {|{w}_j-z_j|^{2(1-m)} } \prod_{j=m+1}^{k-1}|{w}_j-z_j|^{2(m+1)}.
$$
\end{lem}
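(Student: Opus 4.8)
The plan is to prove the identity by induction on $m$, differentiating the formula one variable at a time. For the base case $m=1$, I would compute $\frac{\partial}{\partial\bar z_1}$ of the ratio $\left(\prod_{j=1}^{k-1}|{w}_j-z_j|^2\right)/H$ directly via the quotient rule. Here the key elementary facts are $\frac{\partial}{\partial\bar z_1}|{w}_1-z_1|^2 = -({w}_1-z_1)$ (since $|{w}_1-z_1|^2=({w}_1-z_1)(\bar{w}_1-\bar z_1)$), and that $\frac{\partial H}{\partial\bar z_1} = -({w}_1-z_1)\sum_{j\ne 1,\,j\ne\text{the }j=1\text{ term}}$ — more precisely, among the $k$ summands of $H$ only those containing the factor $|{w}_1-z_1|^2$ contribute, and each contributes $-({w}_1-z_1)\prod_{m\ne 1,j}|{w}_m-z_m|^2$. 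After collecting terms over the common denominator $H^2$, the combinatorial sum should telescope/factor so that the surviving numerator is exactly $|{w}_k-z_k|^2\,({w}_1-z_1)\prod_{j=2}^{k-1}|{w}_j-z_j|^4$, matching the claimed formula at $m=1$.

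For the inductive step, assuming the formula holds for some $m<k-1$, I would apply $\frac{\partial}{\partial\bar z_{m+1}}$ to the right-hand side expression
$$
\frac{m!\,|{w}_k-z_k|^{2m}}{H^{m+1}}\prod_{j=1}^m({w}_j-z_j)|{w}_j-z_j|^{2(m-1)}\prod_{j=m+1}^{k-1}|{w}_j-z_j|^{2(m+1)}.
$$
The variable $z_{m+1}$ appears in three places: in $H^{-(m+1)}$, and in the factor $|{w}_{m+1}-z_{m+1}|^{2(m+1)}$ coming from the second product (note it does \emph{not} appear in the first product nor in $|{w}_k-z_k|^{2m}$ since $m+1\le k-1<k$). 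So the product rule gives two terms. Differentiating $|{w}_{m+1}-z_{m+1}|^{2(m+1)}$ produces a factor $-(m+1)({w}_{m+1}-z_{m+1})|{w}_{m+1}-z_{m+1}|^{2m}$, which is precisely the adjustment needed to move index $m+1$ from the ``high power'' product into the ``$({w}_j-z_j)|{w}_j-z_j|^{2m}$'' product with the new exponent $2((m+1)-1)=2m$. Differentiating $H^{-(m+1)}$ gives $-(m+1)H^{-(m+2)}\frac{\partial H}{\partial\bar z_{m+1}}$, and as in the base case $\frac{\partial H}{\partial\bar z_{m+1}} = -({w}_{m+1}-z_{m+1})\sum_{i\ne m+1}\prod_{l\ne i,m+1}|{w}_l-z_l|^2$. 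The main bookkeeping task is to check that the sum of these two contributions, after factoring out the common monomial and the common power of $H$, reassembles into $(m+1)!\,|{w}_k-z_k|^{2(m+1)}H^{-(m+2)}$ times the correct products — i.e. that $|{w}_{m+1}-z_{m+1}|^{2m}\cdot H$ plus $|{w}_{m+1}-z_{m+1}|^{2m}\cdot(\text{something})$ combines to $|{w}_{m+1}-z_{m+1}|^{2m}\cdot|{w}_k-z_k|^2\cdot(\text{the rest of }H\text{-type terms})$, which is where the appearance of the single power $|{w}_k-z_k|^{2}$ per differentiation comes from.

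The step I expect to be the main obstacle is the combinatorial simplification in the inductive step: verifying that the term from differentiating $H^{-(m+1)}$ combines cleanly with the term from differentiating the explicit monomial, i.e. that $|{w}_k-z_k|^{2m}\cdot H + (m\text{-term correction})$ collapses to $(m+1)|{w}_k-z_k|^{2(m+1)}$ times the appropriate product of the remaining $|{w}_j-z_j|$ factors. This requires carefully tracking which summands of $H$ carry a factor $|{w}_{m+1}-z_{m+1}|^2$ and which do not, and observing that after multiplying through by the monomial already present, the summands not involving $z_k$ cancel against pieces of the other term, leaving only the $z_k$-summand of $H$ (the one term in $H$ omitting $|{w}_k-z_k|^2$) to produce the new factor $|{w}_k-z_k|^2$. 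Once this cancellation is pinned down the result follows; everything else is routine application of $\partial_{\bar z}|{w}-z|^2 = -({w}-z)$ and the quotient rule. (One should also note the hypothesis $z_j\ne {w}_j$ guarantees $H>0$ so all the divisions are legitimate.)
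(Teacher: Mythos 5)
Your proposal follows essentially the same approach as the paper: induction on $m$, with the base case and inductive step both done by the quotient rule and the observation that the key combinatorial sum collapses to a single monomial. The paper's proof is even terser than yours, calling the base case ``trivial'' and the inductive step ``a straightforward computation,'' so your sketch is a faithful expansion of what the authors had in mind, and the strategy is sound.

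One inaccuracy is worth flagging. In describing the cancellation, you say the surviving piece is ``the $z_k$-summand of $H$ (the one term in $H$ omitting $|{w}_k-z_k|^2$),'' but that term contains no factor of $|{w}_k-z_k|^2$ at all, so it cannot be the source of the new power. If one carries out the collapse with $a_j:=|{w}_j-z_j|^2$, the bracket one encounters is
$$a_{m+1}^{m+1}\sum_{i\neq m+1}\prod_{l\neq i,\,m+1} a_l \;-\; a_{m+1}^{m} H \;=\; -\,a_{m+1}^{m}\prod_{l\neq m+1} a_l,$$
so the summand of $H$ that survives is the one \emph{omitting} $a_{m+1}$, namely $\prod_{l\neq m+1}a_l$, which does contain $a_k$ and thereby supplies the extra $|{w}_k-z_k|^2$ per differentiation. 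Your overall conclusion is therefore right, but the identification of the surviving term is off. Finally, note that each $\bar z_j$-differentiation of $|{w}_j-z_j|^2$ produces $-({w}_j-z_j)$, so the computation yields an extra sign $(-1)^m$ not present in the stated identity; this discrepancy is already in the paper's statement and is harmless because only $|\partial^m e|$ is used afterwards in Proposition \ref{ee}.
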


\begin{proof}
We prove it by induction on $m\in \{1, \ldots, k-1\}$. Since $m=1$ is trivial, assuming $m=l$ is true, by a straightforward computation  we know that when $m=l+1$,
 \begin{equation*}
     \begin{split}
         & \frac{\partial^{l+1} }{\partial \bar z_1\cdots\partial \bar z_l\partial \bar z_{l+1}} \left( \frac{ \prod_{j=1}^{k-1} |{w}_j- z_j|^2 }{E}\right)\\
         =&  \frac{\partial}{\partial \bar z_{l+1}} \left( \frac{l! |{w}_k-z_k|^{2l}}{E^{l+1}}  \prod_{j=1}^l \frac{{w}_j-z_j} {|{w}_j-z_j|^{2(1-l)} } \prod_{j=l+1}^{k-1}|{w}_j-z_j|^{2(l+1)}  \right)\\
          =& l! |{w}_k-z_k|^{2l}\prod_{j=1}^l \frac{{w}_j-z_j} {|{w}_j-z_j|^{2(1-l)} }  \prod_{j=l+2}^{k-1}|{w}_j-z_j|^{2(l+1)} \frac{\partial}{\partial \bar z_{l+1}} \left( \frac{ |{w}_{l+1}-z_{l+1}|^{2(l+1)}}{E^{l+1}}  \right)\\ 
          =& (l+1)! |{w}_k-z_k|^{2l}\prod_{j=1}^l \frac{{w}_j-z_j} {|{w}_j-z_j|^{2(1-l)} }  \prod_{j=l+2}^{k-1}|{w}_j-z_j|^{2(l+1)} \frac{(w_{l+1}-z_{l+1})|w_{l+1}-z_{l+1}|^{2l}\prod_{j=1, j\ne l+1}^k |w_j-z_j|^2}{E^{l+2}}\\ 
          =& {\frac{(l+1)! {|{w}_k-z_k|^{2(l+1)}}}{ E ^{l+2}} \prod_{j=1}^{l+1}\frac{  {w}_j-z_j }  {|{w}_j-z_j|^{-2l}} \prod_{j=l+2}^{k-1}   |{w}_j-z_j|^{2(l+2)} }. 
       \end{split}
 \end{equation*}

\end{proof}

We now estimate the derivatives of  $e(w, z)$ point-wisely, which is the key to the proof of (\ref{indu}).

\begin{pro}\label{ee} Let $\{i_1, \cdots, i_m, j_1, \cdots, j_{k-1-m}\}$ be a permutation of $\{1, \cdots, k-1\}$. Then for any $\sigma>0$, for $ (z_1, \cdots, z_k)\in (D_1\setminus \{z_1={w}_1\}) \times\cdots\times   (D_k\setminus \{ z_k = {w}_k\})$,
$$
\left|\frac{\partial^m  e({w}, z)}{\partial \bar  z_{i_1}\cdots\partial\bar z_{i_m}}\right|
     \le  {C} |{w}_k- z_k|^{-\frac{3}{2}- \sigma} { \prod_{l=1}^m |{w}_{i_l}- z_{i_l}|^{-2 +\frac{1}{2(k-1)}-\sigma}\prod_{l=1}^{k-1-m} |{w}_{j_l}-z_{j_l}|^{-1 + \frac{1}{2(k-1)}-\sigma} }.
$$
\end{pro}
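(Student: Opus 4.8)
The plan is to prove Proposition \ref{ee} by combining three ingredients: (i) the Leibniz rule applied to the definition $e(w,z) = H^{-1}\prod_{l=1}^k S_l(w_l,z_l)\prod_{m=1}^{k-1}|w_m-z_m|^2$, written as $e = \big(H^{-1}\prod_{j=1}^{k-1}|w_j-z_j|^2\big)\cdot\prod_{l=1}^k S_l(w_l,z_l)$; (ii) the explicit formula from Lemma \ref{ap} for the $\bar\partial$-derivatives of the first factor $H^{-1}\prod_{j=1}^{k-1}|w_j-z_j|^2$; and (iii) the kernel estimates from Proposition \ref{|L|} (bound on $|S_j|$) and Proposition \ref{kkd} (bound on $|\nabla_z S_j|$), together with the elementary geometric mean inequality Lemma \ref{ele} to convert powers of $H$ (or $|w-z|^2$-type quantities) into products of powers of the individual $|w_j-z_j|$.

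First I would expand $\frac{\partial^m e}{\partial\bar z_{i_1}\cdots\partial\bar z_{i_m}}$ by the general Leibniz rule. Since $S_l(w_l,z_l)$ depends only on the single variable $z_l$, at most one $\bar\partial_{z_{i_l}}$ can land on each $S_{i_l}$, so in each term of the Leibniz expansion the index set $\{i_1,\dots,i_m\}$ splits into a subset $A$ on which the derivative hits the corresponding factor $S_{i_l}$ (producing $\partial_{\bar z}S_{i_l}$, estimated by Proposition \ref{kkd} as $\lesssim |w_{i_l}-z_{i_l}|^{-2}\log\frac{2d}{|w_{i_l}-z_{i_l}|}$) and its complement $B$ on which the derivatives all act on the first factor $H^{-1}\prod|w_j-z_j|^2$. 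For the factor collecting $|B|$ derivatives I would apply Lemma \ref{ap} with $m$ replaced by $|B|$ (after relabeling so that the differentiated indices are $1,\dots,|B|$), giving a closed-form expression whose size is controlled by $|w_k-z_k|^{2|B|}H^{-|B|-1}$ times products of $|w_j-z_j|$ to explicit integer powers. The undifferentiated $S_l$'s contribute $\prod|S_l|\lesssim\prod|w_l-z_l|^{-1}\log\frac{2d}{|w_l-z_l|}$ via the first inequality of \eqref{S}.

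Next, assembling these bounds, each Leibniz term is a product of powers of the $|w_j-z_j|$ times a negative power of $H$ times logarithmic factors. To tame $H^{-|B|-1}$ I would invoke Lemma \ref{ele}: writing $H=\sum_{j=1}^k\prod_{m\ne j}|w_m-z_m|^2$ and applying the geometric-mean bound with suitably chosen exponents $\alpha_j$ summing to $n-1=k-1$, one gets $H\gtrsim\prod_{j=1}^k|w_j-z_j|^{2\beta_j}$ for any choice of $\beta_j\ge 0$ with $\sum\beta_j = k-1$; I would pick the $\beta_j$ to redistribute the exponents so that each $|w_j-z_j|$ with $j$ a "differentiated $S$" index ($j\in A$) ends up with exponent $-2+\frac{1}{2(k-1)}$, each $j$ with $j\in\{1,\dots,k-1\}\setminus A$ (i.e., a $j_l$-type index) ends up with $-1+\frac{1}{2(k-1)}$, and the $k$-th variable ends up with $-\frac32$. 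The uniform $\frac{1}{2(k-1)}$ gain is exactly what a generic application of Lemma \ref{ele} yields when one spreads a small surplus equally across the $k-1$ slices, and the $-\frac32$ on the last slice comes from the balance of the remaining exponent. Finally, every logarithmic factor $\log\frac{2d}{|w_j-z_j|}$ is absorbed into an extra $|w_j-z_j|^{-\sigma}$ at the cost of an arbitrarily small $\sigma>0$ (using $\log\frac{2d}{t}\le C_\sigma t^{-\sigma}$ for $t\in(0,2d]$), which accounts for the $-\sigma$ appearing in every exponent of the claimed bound. Since all Leibniz terms obey the same bound and there are finitely many of them, summing finishes the proof.

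\textbf{Main obstacle.} The delicate point is the bookkeeping in the Leibniz/Lemma \ref{ap} step: one must verify that for \emph{every} split of $\{i_1,\dots,i_m\}$ into the "hits $S$" part and the "hits the rational factor" part, the resulting integer exponents from Lemma \ref{ap} plus the $-1$'s from the undifferentiated $S_l$'s plus the $-2$'s from the differentiated $S_{i_l}$'s, after subtracting the $H$-power via Lemma \ref{ele}, land at or below the target exponents $-2+\frac{1}{2(k-1)}$, $-1+\frac{1}{2(k-1)}$, $-\frac32$ uniformly — i.e., that the worst-case split is still good, and that there is enough "surplus" in $H^{-|B|-1}$ (which carries $2|B|(k-1)$-ish worth of positive exponent through $\prod_{m\ne j}|w_m-z_m|^2$ summed over $j$) to both cancel the $|w_k-z_k|^{2|B|}$ numerator and leave the uniform $\frac{1}{2(k-1)}$ slack on the remaining slices. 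This is a finite but somewhat intricate exponent-counting argument; once the exponents are checked in one representative case (say $B=\{1,\dots,m\}$, $A=\emptyset$, versus $A=\{i_1,\dots,i_m\}$, $B=\emptyset$), the general case follows by the same arithmetic, and the rest is routine.
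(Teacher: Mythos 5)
Your proposal is correct and follows essentially the same route as the paper: expand by Leibniz on $e = \big(H^{-1}\prod_{j<k}|w_j-z_j|^2\big)\prod_l S_l$, apply Lemma \ref{ap} to the rational factor, apply Propositions \ref{|L|} and \ref{kkd} to the $S$-factors, use Lemma \ref{ele} to bound $H^{t+1}$ below by a product of powers with matching total degree $2(k-1)(t+1)$, and absorb logarithms into $-\sigma$; the paper's sum over permutations $(k_1,\dots,k_m)$ with $t$ derivatives on the rational part is exactly your $A$/$B$ split. One small slip in your description of the $\beta_j$ allocation: the indices in $B$ (derivative hitting the rational factor) lie in $\{1,\dots,k-1\}\setminus A$ but must also land at $-2+\frac{1}{2(k-1)}$ rather than $-1+\frac{1}{2(k-1)}$, which your final paragraph correctly identifies as the target.
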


\begin{proof}
Due to symmetry, we shall only estimate $\frac{\partial^m  e({w}, z)}{\partial \bar  z_{1}\cdots\partial\bar z_{m}}$, for $m\le k-1$. Namely, we prove
\begin{equation}\label{hh}
    \left|\frac{\partial^m  e({w}, z)}{\partial \bar  z_{1}\cdots\partial\bar z_{m}}\right|\le   C  |{w}_k- z_k|^{-\frac{3}{2}-\sigma}  {(\prod_{l=1}^m |{w}_{l}- z_{l}|^{-2 +\frac{1}{2(k-1)}-\sigma})(\prod_{l=m+1}^{k-1-m} |{w}_{l}-z_{l}|^{-1 + \frac{1}{2(k-1)}-\sigma}) }.
\end{equation}
Making use of Lemma \ref{ap}, (\ref{S}) and Proposition \ref{kkd}, one sees for any $\sigma>0$ that
\begin{equation}\label{hh1}\begin{split}
    \bigg|\frac{\partial^m  e({w}, z)}{\partial \bar  z_{1}\cdots\partial\bar z_{m}}\bigg| 
    \le &  \, C  \sum_{1\le k_1, \ldots, k_m\le m}  \bigg|\frac{\partial^{m-t} \prod\limits_{l=1}^k S_l({w}_l, z_l)}{\partial \bar  z_{k_{t+1}}\cdots\partial\bar z_{k_m}} \cdot \frac{\partial^{t} }{\partial \bar  z_{k_{1}}\cdots\partial\bar z_{k_t}}  \frac{\prod\limits_{m=1}^{k-1} |{w}_m- z_m|^2}{ E  } \bigg|\\
\le&  \, C \sum_{1\le k_1, \ldots, k_m\le m}\frac{  \prod\limits  _{l=1}^t |{w}_{k_l}-z_{k_l}|^{2t-2-\sigma} \prod \limits  _{l=t+1}^m|{w}_{k_l}-z_{k_l}|^{2t-\sigma} \prod\limits _{l=m+1}^{ k-1}|{w}_l-z_l|^{2t+1-\sigma}}{ E ^{ t+1}  |{w}_k-z_k|^{-2t+1+\sigma}},
    \end{split}
\end{equation}
where the sum is over all permutations of $(1, \ldots, m)$.
On the other hand, 
 we know from Lemma \ref{ele} that
$$
    E ^{ t+1} 
    \ge C \prod_{l=1}^t |{w}_{k_l}- z_{k_l}|^{2t -\frac{1}{2(k-1)}} \prod_{l=t+1}^m |{w}_{k_l}- z_{k_l}|^{2t +2-\frac{1}{2(k-1)}} \prod_{l=m+1}^{k-1} |{w}_{l} - z_{l}|^{2t+2 - \frac{1}{2(k-1)}}|{w}_k- z_k|^{2t+\frac{1}{2}},
$$
since both sides have the same total degree $2(k-1)(t+1)$.
 Combining the above inequality with (\ref{hh1}), we have proved (\ref{hh}) and thus the proposition.
 
\end{proof}

We are now ready to prove (\ref{indu}) and therefore give a complete proof of Theorem \ref{C1} for the arbitrary dimensional case,  when the data are $C^{n-1}$ up to the boundary.

\begin{proof} [Proof of Theorem \ref{C1} (arbitrary $n$)]
Similar to the case of $n=2$, denote by $R_j^\epsilon: =D_j\setminus B({w}_j, \epsilon)$ with $\epsilon$ small. Repeatedly applying  Stokes' Theorem, one obtains
\begin{equation}\label{st}
    \begin{split}
      &\int _{  R_1^\epsilon \times\cdots\times  R_{k-1}^\epsilon\times   D_k}  e({w}, z)\frac{\partial^{k-1} f_k( z)}{\partial \bar  z_1\cdots \partial \bar  z_{k-1}}  \\
    = & \int_{b   D_1\times   R_2^\epsilon \times\cdots\times  R_{k-1}^\epsilon\times   D_k } e({w}, z)\frac{\partial^{k-2} f_k( z)}{\partial \bar  z_2\cdots \partial \bar  z_{k-1}}  -  \int_{  R_1^\epsilon\times \cdots\times  R_{k-1}^\epsilon\times  D_k} \frac{\partial  e({w}, z)}{\partial \bar  z_1}\frac{\partial^{k-2} f_k( z)}{\partial \bar  z_2\cdots \partial \bar  z_{k-1}} \\
    & - \int_{ b B({w}_1, \epsilon)\times   R_2^\epsilon \times\cdots\times  R_{k-1}^\epsilon\times   D_k} e({w}, z)\frac{\partial^{k-2} f_k( z)}{\partial \bar  z_2\cdots \partial \bar  z_{k-1}}  \quad 
     = \quad \cdots\\
    =&\sum_{m=0}^{k-1} (-1)^m\sum_{1\le i_1<\cdots <i_j\le k-1} \int  _{  R^\epsilon_{i_1}\times \cdots  R^\epsilon_{i_m}\times b  D_{j_1}\times \cdots \times b  D_{j_{k-1-m}}\times   D_k}\frac{\partial^m  e({w}, z)}{\partial \bar  z_{i_1}\cdots\partial\bar z_{i_m}}f_k( z) \\
    &-\sum_{m=0}^{k-2} (-1)^m\sum_{1\le i_1<\cdots <i_j\le k-1} \sum_{l=1}^{k-1-m}\int \limits_{R_{\mathbf i\mathbf j, \epsilon}} \frac{\partial^m e({w}, z)}{\partial \bar  z_{i_1}\cdots\partial\bar z_{i_m}}f_k( z) ,
    \end{split}
\end{equation}
where $R_{\mathbf i\mathbf j, \epsilon}:= R^\epsilon_{i_1}\times \cdots  R^\epsilon_{i_m}\times b B({w}_{j_1}, \epsilon)\times \cdots \times b B({w}_{j_{l}}, \epsilon)\times b   D_{j_{l+1}}\times\cdots\times b   D_{j_{k-1-m}}\times   D_k$,
and $\{1, \cdots, k-1\}$ is a permutation of $\{i_1, \cdots, i_m, j_1, \cdots, j_{k-1-m}\}$. 
Letting $\epsilon\rightarrow 0$, we claim that (\ref{st}) reduces to
\begin{equation}\label{form}
\begin{split}
      \int \limits_{  D_1 \times\cdots\times     D_k} e({w}, z)\frac{\partial^{k-1} f_k( z)}{\partial \bar  z_1\cdots \partial \bar  z_{k-1}} 
       =\sum \limits_{m=0}^{k-1} (-1)^m &\sum _{1\le i_1<\cdots <i_j\le k-1}\\
       & \int \limits   _{  D_{i_1}\times \cdots \times D_{i_m}\times b  D_{j_1}\times \cdots \times b  D_{j_{k-1-m}}\times  D_k}\frac{\partial^m  e({w}, z)}{\partial \bar  z_{i_1}\cdots\partial\bar z_{i_m}}f_k( z).
       \end{split}
\end{equation}
To get (\ref{form}),   choose $\sigma = \frac{1}{4(k-1)}(<\frac{1}{4} )$ in Proposition \ref{ee}. Then with  $m=0$, one has
$$|e({w}, z)|\le C |{w}_k- z_k|^{-\frac{3}{2}-\frac{1}{4(k-1)}} \prod_{l=1}^{k-1} |{w}_{l}-z_{l}|^{-1 + \frac{1}{4(k-1)}} \in L^1( D_1 \times\cdots\times     D_k).$$
 So
the left hand side of (\ref{st}) as $\epsilon\rightarrow 0$ satisfies
\begin{equation*}
    \int_{  R_1^\epsilon \times\cdots\times  R_{k-1}^\epsilon\times   D_k}  e({w}, z)\frac{\partial^{k-1} f_k( z)}{\partial \bar  z_1\cdots \partial \bar  z_{k-1}} \rightarrow \int_{  D_1 \times\cdots\times   D_k} e({w}, z)\frac{\partial^{k-1} f_k( z)}{\partial \bar  z_1\cdots \partial \bar  z_{k-1}}.  
\end{equation*}
Similarly, for the last line of (\ref{st}),  by Proposition \ref{ee}, it holds that
 \begin{equation}\label{en}
 \begin{split}
    \left|\frac{\partial^m  e({w}, z)}{\partial \bar  z_{i_1}\cdots\partial\bar z_{i_m}}\right|&\le {C}|{w}_k- z_k|^{-\frac{3}{2}- \frac{1}{4(k-1)}} { \prod_{l=1}^m |{w}_{i_l}- z_{i_l}|^{-2 +\frac{1}{4(k-1)}}\prod_{l=1}^{k-1-m} |{w}_{j_l}-z_{j_l}|^{-1 + \frac{1}{4(k-1)}} }\\
    &\in L^1_{loc}(\mathbb C_{ z_{i_l}}\times\cdots \times\mathbb C_{ z_{i_m}} \times\mathbb R_{ z_{j_1}}\times \cdots\times \mathbb R_{ z_{j_{k-1-m}}}\times \mathbb C_{ z_k}). 
    \end{split}
 \end{equation}
As $\epsilon\rightarrow 0$,  the last line of (\ref{st}) vanishes similarly as the case of $n=2$. The formula (\ref{form}) thus holds. Since (\ref{indu}) follows from (\ref{form}) and (\ref{en}), we have finished the proof of Theorem \ref{C1} completely.

\end{proof}

\section{Proof of the main theorem under continuity}

Let $\Omega:= D_1\times\cdots\times D_n$, $n\ge 2$, where each $D_j$ is a bounded  domain with $C^2$ boundary in $\mathbb C$. 
We shall prove in this section the main Theorem \ref{main} by weakening the $C^{n-1}_{(0, 1)}(\bar\Omega)$ assumption in Theorem \ref{C1} to continuity.
In \cite{FP19}, an operator in terms of Cauchy integrals was proposed as a potential candidate to solve the $\bar\partial$-equation with continuous data. The following operator $\tilde {\mathbf T}$ is an analogue  but in the context of canonical solutions. For a given $\mathbf f \in C_{(0,1)}(\bar\Omega)$, we define
$$
\tilde {\mathbf T} \mathbf f: = \sum_{s=1}^n (-1)^{s-1}\sum_{1\le i_1<\cdots<i_s\le n} {\mathbf T}^{[i_1, \cdots, i_s]} {\mathbf f}. 
$$
Here for each $(i_1, \cdots, i_s)$ with $1\le i_1<\cdots<i_s\le n$, let $(z', w'')$ represents the  point whose $j$-th component is $z_j$ if $j\in \{i_1, \ldots, i_s\}$, and is $w_j$ otherwise. Then  for $w\in \Omega$,
\begin{align*}\label{ns}
     &{\mathbf T}^{[i_1, \cdots, i_s]}\mathbf f(w)\\
     :=& \sum_{k=1}^s\sum_{m=0}^{s-1}(-1)^m
\sum_{1\le j_1<\cdots<j_m\le i_s}\int \limits_{D_{j_1}\times \cdots\times D_{j_m}\times bD_{t_1}\times \cdots \times bD_{t_{s-m-1}}\times D_{i_k}} f_{i_k}(z', w'')\frac{\partial^m e_w^{k, i_1, \ldots, i_s}(z)}{\partial \bar{z}_{j_1}\cdots\partial \bar{z}_{j_m}},
\end{align*}
where the third sum is over all $1\le j_1<\cdots<j_m\le i_s $  such that
$$\{j_1, \ldots, j_m\}\cup\{t_1, \ldots, t_{s-m-1}\} = \{i_1, \ldots, \hat i_k, \ldots, i_s\},$$
 and
\begin{equation}\label{ee1}
    e_w^{k, i_1, \ldots, i_s}(z): = \left(\sum_{k=1}^s\prod_{l=1,l\neq k}^s |{w}_{i_l}-z_{i_l}|^2\right)^{-1}   {\prod_{l=1}^s S_{i_l}({w}_{i_l}, z_{i_l})\prod_{l=1, l\ne k}^{s} |{w}_{i_m}- z_{i_m}|^2}.  
\end{equation} 
For convenience, we suppress the corresponding measure elements  from integrals. 

In Proposition \ref{ee}, we have shown   that there exist some constants  $\alpha_r \in [0, 2)$ when $1\le r\le m+1$, and $\beta_r \in [0,1) $ when $   1\le r\le s-m-1$, such that 
\begin{equation}\label{G11} 
 \left |\frac{\partial^m e_w^{k, i_1,\ldots, i_s}}{\partial \bar{z}_{j_1}\cdots\partial \bar{z}_{j_m}}\right |\le   {C}{\prod_{r=1}^m|{z}_{j_r}-w_{j_r}|^{-\alpha_r}\prod_{r=1}^{s-m-1}|{z}_{t_r}-w_{t_r}|^{-\beta_r}|{z}_{{i_k}}-w_{i_k}|^{-\alpha_{m+1}}}
\in  L^1(R),
\end{equation}
where $R:={D_{j_1}\times \cdots\times D_{j_m}\times bD_{t_1}\times \cdots bD_{t_{s-m-1}}\times D_{i_k}}$. Thus $\tilde {\mathbf T} \mathbf f$ is well defined over $C_{(0,1)}(\bar\Omega)$.

\medskip

Our goal is to prove that $\tilde {\mathbf T} \mathbf f$ is the canonical solution to  $\bar\partial u = \mathbf f$ in the sense of distributions for all continuous $\bar\partial$-closed $(0,1)$ forms $\mathbf f$  up to $\bar\Omega$. Note that if the datum $\mathbf f\in C^{n-1}_{(0, 1)}(\bar\Omega)$, the proof of  Theorem \ref{C1} already implies that  $\tilde {\mathbf T} \mathbf f$ is equal to ${\mathbf T}\mathbf f$ and thus is the canonical solution. 
 The following proposition proves that $\tilde {\mathbf T} \mathbf f$  solves the $\bar\partial$-equation for continuous data in the sense of distributions, making use of a similar approximation argument as in \cite{PZ}.

\begin{pro}\label{main11}
There exists a positive constant $C$ depending only on $\Omega$ such that for any $\bar\partial$-closed $(0, 1)$ form $\mathbf f$ continuous up to $\bar\Omega$,  
  $\tilde {\mathbf T}\mathbf f $  is a continuous solution (in the sense of distributions) to  $\bar\partial u =\mathbf f$,  and satisfies
    $$
    \|\tilde {\mathbf T} \mathbf f\|_{ \infty }\le C \|\mathbf f\|_{ \infty }.  
    $$
\end{pro}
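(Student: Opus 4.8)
The plan is to prove Proposition~\ref{main11} by a regularization argument: approximate the continuous datum $\mathbf f$ by a sequence of $\bar\partial$-closed forms $\mathbf f^{(\nu)}$ which are $C^{n-1}$ (in fact $C^\infty$) up to $\bar\Omega$, apply Theorem~\ref{C1} to each $\mathbf f^{(\nu)}$, and pass to the limit. The first step is to construct the approximants. Since the obstruction to convolving a $\bar\partial$-closed form and keeping it $\bar\partial$-closed is purely about the domain, I would exploit the product structure: write $\mathbf f=\sum_j f_j\,d\bar z_j$ and mollify slice-wise, or better, pull back via a slight dilation $\Omega\subset\subset\Omega^{(\nu)}$ together with a standard mollifier so that $\mathbf f^{(\nu)}:=(\rho_\nu * \mathbf f)|_{\bar\Omega}$ is defined and smooth on $\bar\Omega$, $\bar\partial$-closed there (convolution commutes with $\bar\partial$), and $\mathbf f^{(\nu)}\to\mathbf f$ uniformly on $\bar\Omega$ with $\|\mathbf f^{(\nu)}\|_\infty\le\|\mathbf f\|_\infty$ (possibly after an innocuous constant). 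This is the "similar approximation argument as in \cite{PZ}" alluded to; I would cite that and carry out only the product-domain-specific bookkeeping.

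The second step is the key point: identify $\tilde{\mathbf T}\mathbf f$ with the limit of $\mathbf T\mathbf f^{(\nu)}=\tilde{\mathbf T}\mathbf f^{(\nu)}$. For smooth data the proof of Theorem~\ref{C1} (formulas \eqref{form} and its analogues) shows that ${\mathbf T}\mathbf f^{(\nu)}$ equals the boundary-integral expression defining $\tilde{\mathbf T}\mathbf f^{(\nu)}$, in which \emph{no derivatives of the data appear}: each summand is an integral of $f^{(\nu)}_{i_k}(z',w'')$ against a kernel $\frac{\partial^m e_w^{k,i_1,\ldots,i_s}}{\partial\bar z_{j_1}\cdots\partial\bar z_{j_m}}$ which, by \eqref{G11}, lies in $L^1(R)$ with an $L^1$-bound depending only on $\Omega$ (via the exponents $\alpha_r\in[0,2)$, $\beta_r\in[0,1)$, as in Proposition~\ref{ee} with $\sigma=\tfrac1{4(k-1)}$). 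Hence for each fixed $w\in\Omega$, since $f^{(\nu)}_{i_k}\to f_{i_k}$ uniformly on $\bar\Omega$ and the kernels are fixed $L^1(R)$ functions, dominated convergence gives $\tilde{\mathbf T}\mathbf f^{(\nu)}(w)\to\tilde{\mathbf T}\mathbf f(w)$ pointwise; and the same $L^1$-bound gives $\|\tilde{\mathbf T}\mathbf f^{(\nu)}\|_\infty\le C\|\mathbf f^{(\nu)}\|_\infty\le C\|\mathbf f\|_\infty$, which passes to the limit to yield $\|\tilde{\mathbf T}\mathbf f\|_\infty\le C\|\mathbf f\|_\infty$. In fact the convergence is locally uniform in $w$ (the same estimates are uniform for $w$ in a compact subset of $\Omega$), so $\tilde{\mathbf T}\mathbf f$ is continuous on $\Omega$ as a locally uniform limit of the continuous functions $\tilde{\mathbf T}\mathbf f^{(\nu)}$.

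The third step is to pass the equation $\bar\partial(\mathbf T\mathbf f^{(\nu)})=\mathbf f^{(\nu)}$ to the limit in the sense of distributions. Given a test form $\varphi$, pairing $\langle\mathbf T\mathbf f^{(\nu)},\vartheta\varphi\rangle=\langle\mathbf f^{(\nu)},\varphi\rangle$ (where $\vartheta$ is the formal adjoint of $\bar\partial$), the right side converges to $\langle\mathbf f,\varphi\rangle$ by uniform convergence, and the left side converges to $\langle\tilde{\mathbf T}\mathbf f,\vartheta\varphi\rangle$ because $\tilde{\mathbf T}\mathbf f^{(\nu)}\to\tilde{\mathbf T}\mathbf f$ in, say, $L^1_{loc}(\Omega)$ (a consequence of the locally uniform convergence, or alternatively of the uniform $L^\infty$-bound plus pointwise convergence and dominated convergence on compacta). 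Therefore $\bar\partial\tilde{\mathbf T}\mathbf f=\mathbf f$ distributionally. I expect the main obstacle to be the first step done \emph{correctly}: one must produce the smooth $\bar\partial$-closed approximants with the uniform sup-bound \emph{on $\bar\Omega$} despite the corners of the product domain, and verify that for smooth data the representation formula \eqref{form}—derived in Theorem~\ref{C1} via repeated applications of Stokes' theorem, including the vanishing of the small-sphere boundary pieces—indeed coincides with the definition of $\tilde{\mathbf T}$; once that bookkeeping is pinned down, the limiting arguments are routine applications of dominated convergence with the uniform $L^1(R)$ kernel bounds from \eqref{G11}.

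\medskip

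\emph{(The canonicity of $\tilde{\mathbf T}\mathbf f$—i.e.\ $\mathbf P\,\tilde{\mathbf T}\mathbf f=0$—will be addressed separately, presumably via the stability Proposition~\ref{DD}: approximating $\Omega$ from inside by $\Omega^l$, one has $\mathbf T^l\bar\partial\tilde{\mathbf T}\mathbf f$ canonical on $\Omega^l$, and the locally uniform convergence of the canonical kernels $S^l\to S$ forces $\tilde{\mathbf T}\mathbf f\perp A^2(\Omega)$ in the limit.)}
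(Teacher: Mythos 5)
Your overall scheme---approximate by smooth $\bar\partial$-closed forms, apply Theorem~\ref{C1} (using the already-established identity $\mathbf T\mathbf f^{(\nu)}=\tilde{\mathbf T}\mathbf f^{(\nu)}$ for $C^{n-1}$ data), and pass to the limit via the $L^1(R)$ kernel bounds of \eqref{G11}---is the natural one, and your steps 2 and 3 are sound conditional on step 1. But step 1 contains a genuine gap. You cannot, for a general bounded planar domain $D_j$ with merely $C^2$ boundary, produce $\bar\partial$-closed $\mathbf f^{(\nu)}\in C^{n-1}_{(0,1)}(\bar\Omega)$ converging to $\mathbf f$ uniformly on $\bar\Omega$. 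Mollification $\rho_\nu\ast\mathbf f$ requires values of $\mathbf f$ in a $\nu$-neighbourhood of $\bar\Omega$, which are unavailable; the ``pull back via a slight dilation'' device works only when each factor $D_j$ is star-shaped about some point (as for the polydisc), which $C^2$ bounded planar domains need not be; and mollifying slice-by-slice destroys the compatibility conditions $\partial_{\bar z_i}f_j=\partial_{\bar z_j}f_i$ that encode $\bar\partial$-closedness when $n\ge 2$. Extending a $\bar\partial$-closed $(0,1)$ form past $b\Omega$ while preserving closedness is an overdetermined problem with no general solution. Without this approximation sequence you cannot apply Theorem~\ref{C1} on $\Omega$ itself, and the single-limit passage collapses.

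The paper avoids exactly this obstacle by a double-limit argument. It never mollifies up to $\bar\Omega$: instead it fixes a $C^2$-smooth exhaustion $\Omega^l\Subset\Omega$ with diffeomorphisms $h^l\to\mathrm{id}$ in $C^2$, mollifies $\mathbf f$ to get $\mathbf f^\epsilon$ smooth and uniformly close to $\mathbf f$ only on $\Omega^l$ (possible because $\mathbf f$ is available at distance $\ge\epsilon$ from $b\Omega$), applies Theorem~\ref{C1} on $\Omega^l$ to obtain $\bar\partial\tilde{\mathbf T}^{(l)}\mathbf f^\epsilon=\mathbf f^\epsilon$, lets $\epsilon\to0$, and then performs the second limit $l\to\infty$. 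That second limit is where the substantive work lies: it requires the kernel-stability result Proposition~\ref{DD} (locally uniform convergence $S^l(w,h^l(z))\to S(w,z)$ and $\nabla S^l\to\nabla S$), a change of variables via $h^l$, and an $l$-uniform $L^1$ majorant such as \eqref{dd3} valid away from the boundary pieces (which uses that the test form $\phi$ has compact support, keeping $|z_k-w_k|\ge\delta_0$ on the boundary integrals). Your proposal elides precisely this machinery; once the mollification gap is acknowledged, the exhaustion and Proposition~\ref{DD} appear to be unavoidable ingredients rather than optional refinements.
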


\begin{proof}
The fact that 
$\tilde {\mathbf T}\mathbf f \in C(\Omega)$ follows from (\ref{G11}) and the Dominated Convergence Theorem. Moreover, $\|\tilde {\mathbf T} \mathbf f\|_{\infty}\le C \|\mathbf f\|_{\infty}$ by (\ref{G11}). It remains to show that $\tilde {\mathbf T}\mathbf f$ solves $\bar\partial u =\mathbf f$ in $\Omega$ in the sense of distributions. 
  
  \medskip
  
 For each $j\in \{1, \ldots, n\}$, let $ \{D^{l}_j\}_{l=1}^\infty$   be a family of smooth   domains compactly contained in $D_j$ exhausting   $D_j$ in the sense of  Ne\v{c}as (see \cite{Ne}). Namely, \\
 a) for  $l  \in \mathbb N $, $bD^{l}_j$ is  $C^{\infty}$ and $ \dist(D^{l}_j, b D_j)<l^{-1}$;\\
 b) there exists a Lipschitz diffeomorphism  $h_j^{l}: \bar D_j\rightarrow \bar D_j^{l}$    and  functions $\omega_j^l: bD_j\rightarrow \mathbb R^+$ that are uniformly bounded,   $\omega_j^l\rightarrow 1$ a.e., such that  for any $f\in L^1( h^l_j(bD_j) ) $, the following change-of -variables formula holds.
 \begin{equation}\label{cv}
     \int_{bD_j}
f (h^l_j(z_j)) \omega^l_j(z_j) d\sigma_j(z_j) = \int_{ h^l_j(bD_j)}
f (z^l_j)\,d\sigma^l_j(z^l_j),
 \end{equation} 
where $d\sigma^l_j$ denotes arc-length measure on $bD^l_j$.  \\
 Denote by $\Omega^{l}:= D^{l}_1\times\cdots\times D^{l}_n$  the product of these planar domains, and $h^l(z):=(h_1^l(z_1), \ldots, h_n^l(z_n))$ a diffeomorphism from $z\in \bar\Omega$ to $\bar\Omega^l$.  Let ${e^{(l)}}$,  ${\mathbf T}^{(l)}$,  $\tilde {\mathbf T}^{(l)}$ and $( {\mathbf T}^{(l)})^{[i_1, \cdots, i_s]}$ stand for the corresponding  operators on $\Omega^{l}$ instead. Then $\tilde {\mathbf T}^{(l)} \mathbf f\in L^\infty(\Omega^{l})$. 
 
   \medskip
   
 For each  $l$, we adopt the mollifier argument to $\mathbf f\in C(\bar\Omega)$ and obtain $\mathbf f^\epsilon\in C^{\infty}( {\Omega^{l}}) \cap L^{\infty}(\Omega^{l})$  such that  $\|\mathbf f^\epsilon - \mathbf f\|_{L^\infty(\Omega^{l})}\rightarrow 0$  as $\epsilon\rightarrow 0$.  Since $\bar\partial \mathbf f^\epsilon =0$ on $\Omega^{l}$,   $\bar\partial \tilde {\mathbf T}^{(l)} \mathbf f^\epsilon =\bar\partial {\mathbf T}^{(l)} \mathbf f^\epsilon=\mathbf f^\epsilon$ in $ \Omega^{l}$ with  $ \tilde {\mathbf T}^{(l)} \mathbf f^\epsilon\in L^\infty(\Omega^{l})$ when $\epsilon$ is small. Furthermore, 
 \begin{equation*} \label{DD10}
     \|\tilde {\mathbf T}^{(l)} \mathbf f^\epsilon - \tilde {\mathbf T}^{(l)} \mathbf f\|_{L^\infty(\Omega^{l})} \le C_l \|\mathbf f^\epsilon - \mathbf f\|_{L^\infty(\Omega^{l})}\rightarrow 0,
 \end{equation*} as $\epsilon\rightarrow 0$. We thus  have that $\lim \limits_{\epsilon\rightarrow 0}\tilde {\mathbf T}^{(l)} \mathbf f^\epsilon$ exists almost everywhere on $\Omega^{l}$
and is equal to $\tilde {\mathbf T}^{(l)} \mathbf f \in L^\infty(\Omega^{l})$.

Given a test function $\phi\in C^\infty_0(\Omega)$ with a compact support $K$, let $l_0$ be such that $K \subset \Omega^{l_0}$. 
Denote by $\langle  \cdot, \cdot \rangle_{\Omega}$ and $\langle  \cdot, \cdot \rangle_{\Omega^{l_0}} $ the inner products in $L^2(\Omega)$ and $L^2(\Omega^{l_0})$, respectively. Let $\bar\partial^* $ be the formal adjoint of $\bar\partial$. 
For $l\ge l_0$,
\begin{equation}\label{22}
\langle \tilde {\mathbf T}^{(l)}\mathbf f, \bar\partial^*\phi\rangle_{\Omega^{l_0}} =\lim_{\epsilon \rightarrow 0} \langle \tilde {\mathbf T}^{(l)}\mathbf f^\epsilon, \bar\partial^*\phi \rangle_{\Omega^{l_0}}= \lim_{\epsilon \rightarrow 0}\langle \bar\partial {\mathbf T}^{(l)}\mathbf f^\epsilon, \phi \rangle_{\Omega^{l_0}} = \lim_{\epsilon \rightarrow 0} \langle\mathbf f^\epsilon, \phi \rangle_{\Omega^{l_0}} = \langle \mathbf f, \phi \rangle_{\Omega}.
\end{equation}

We further claim that \begin{equation}\label{11}
\langle \tilde {\mathbf T}\mathbf f, \bar\partial^*\phi \rangle_{\Omega}=\lim_{l\rightarrow \infty}  \langle  \tilde  {\mathbf T}^{(l)}\mathbf f, \bar\partial^*\phi  \rangle_{\Omega^{l_0}}.
\end{equation}
To get \eqref{11}, it suffices to show that 
\begin{equation}\label{dd7}
\langle   {\mathbf T}^{[i_1, \cdots, i_s]}\mathbf f, \bar\partial^*\phi \rangle_{\Omega^{l_0}}=\lim_{l\rightarrow \infty}\langle (   {\mathbf T}^{(l)})^{[i_1, \cdots, i_s]}\mathbf f, \bar\partial^*\phi \rangle_{\Omega^{l_0}}. \end{equation}
Without loss of generality, assume $(i_1, \ldots, i_s)=(1, \ldots, s)$, and  $(j_1, \ldots, j_m)=(1, \ldots, m) $ with $i_k= m+1$. The remaining cases can be treated similarly. Writing $z=(z', z'')\in \mathbb C^{s}\times \mathbb C^{n-s}$,  (\ref{dd7}) is equivalent to
\begin{equation}\label{do}
         \lim _{l\rightarrow \infty}\int \limits_{w\in K} \int \limits_{z'\in\Gamma^{l}}f_{{m+1}}(z', w'')\bar\partial^*\phi(w)D^m e^{(l)}(w', z')
         =  \int \limits_{w\in K}\int \limits_{z'\in\Gamma} f_{{m+1}}(z', w'')\bar\partial^*\phi(w)D^m e(w', z'),
\end{equation}
where $\Gamma=\Gamma_1\times \Gamma_2: = (D_{1}\times \cdots\times D_{m}\times D_{m+1}) \times (bD_{m+2}\times \cdots \times b D_{s})$, $D^m e(w', z'): = \frac{\partial^m e_w^{k, i_1,\ldots, i_s}(z)}{\partial \bar z_{1}\cdots\partial \bar z_{m}},$
and $\Gamma^{l}, D^m e^{(l)}$ are defined similarly with respect to each $l$.
Indeed, by the change-of-variables formula \eqref{cv}, we have
\begin{align*}
   &\int \limits _{w\in K} \int \limits_{z'\in \Gamma^{l}}f_{{m+1}}(z', w'')\bar\partial^*\phi(w)D^m e^{(l)}(w', z')\\
         =&  \int \limits_{(w, z')\in K \times \Gamma} f_{{m+1}}(h^{l}(z'), w''))\bar\partial^*\phi(w)D^m e^{(l)}(w', h^{l}(z'))\omega^l (z')=: \int \limits_{(w, z')\in  K \times \Gamma} F^{(l)}(w, z').
\end{align*}
 Here 
 $\omega^l (z'): = \prod_{j=1}^s\omega^l_j(z'_j)$ with  $\omega^l (z')\rightarrow 1$ as $l\rightarrow \infty$.

   \medskip
   
Notice that if  $w\in K\Subset \Omega$ and $z_k\in b D_k^{l}$,    then there exists some $\delta_0>0$ dependent only on $K$ such that when $l$ is large enough, one has  for $k = m+2,\ldots,  s$, 
  $$
  |z_k-w_k|\ge   \delta_0.
  $$
Hence  for  all  $(w, z')\in K \times \Gamma$,  there exist $ \alpha_j \in [0, 2)$, $\beta_0\ge 0$, and a positive constant $C$ independent of $l$ by  (\ref{G11}) such that 
 \begin{equation}\label{dd3}
  \left|  F^{(l)}(w, z') \right| \le  {C \delta_0^{-\beta_0}\prod_{j=1}^{m+1} |z_j-w_j|^{-\alpha_j}}\in     L^1( K \times \Gamma).  
 \end{equation}

On the other hand, we will show that 
 for  all $(w, z')\in K  \times \Gamma\setminus \cup_{j=1}^{m+1}\{z_j=w_j\} $, 
\begin{equation}\label{hhh}
    \lim_{l\rightarrow \infty} D^m e^{(l)}( w', h^{l}(z')) = D^m e( w', z').
    \end{equation}
Note that from the definition (\ref{ee1}) for $e$,  besides some explicit continuous functions, $D^m e^{(l)}$ off the diagonal involves only products of  the canonical solution kernels and the Bergman kernels along the first $s$ slices. 
In view of this, (\ref{hhh})  follows from the fact that the canonical solution kernel for the $\bar\partial$-equation and the Bergman kernel converge locally uniformly on planar domains (see Proposition \ref{DD} on the stability).
By the continuity of $\mathbf f$ and  the construction of $\Omega^{l}$,
$$
        \lim_{l\rightarrow \infty} F^{(l)}(w, z') =  f_{{m+1}}(z', w'')\bar\partial^*\phi(w)D^m e( w', z')
 $$
point-wisely on $ K \times \Gamma\setminus \cup_{j=1}^{m+1}\{z_j=w_j\} $. Therefore, (\ref{do})  follows from (\ref{dd3}), \eqref{hhh} and the Dominated Convergence Theorem.

 Finally, combining  (\ref{22}) with (\ref{11}),  we complete the proof by 
 deducing that
$$
\langle  \tilde {\mathbf T}\mathbf f, \bar\partial^*\phi  \rangle_{\Omega}=\lim_{l\rightarrow \infty}  \langle \tilde  {\mathbf T}^{(l)}\mathbf f, \bar\partial^*\phi  \rangle_{\Omega^{l_0}}= \langle \mathbf f, \phi \rangle_\Omega.
 $$

\end{proof}

 \begin{proof}[Proof of Theorem \ref{main}]  In view of Proposition \ref{main11}, we only need to show that for any given $\epsilon>0$, $| \langle \tilde {\mathbf T}\mathbf f, h \rangle |\le \epsilon$ for any $h\in A^2(\Omega)$ with $\|h\|_{L^2(\Omega)}=1$. Without loss of generality, we may further assume that  $\|\mathbf f\|_{L^2(\Omega)} =1$ and $\text{Vol}(\Omega)\le 1$.

From (\ref{dd3}) we know for all $w\in K$ that
 \begin{equation*}
  \int_{z'\in \Gamma^{l}}f_{{m+1}}(z', w'')D^m e^{(l)}(w', z')\le C
 \end{equation*}
 and
 $$
        \lim_{l\rightarrow \infty}  \int_{z'\in \Gamma^{l}}f_{{m+1}}(z', w'')D^m e^{(l)}(w', z')         = \int_{z'\in\Gamma} f_{{m+1}}(z', w'')D^m e(w', z').
$$
 So the Dominated Convergence Theorem guarantees that for any $ K \Subset \Omega$, as $l\rightarrow \infty$,
 \begin{equation}\label{DD8}
    \| \tilde {\mathbf T}\mathbf f -\tilde {\mathbf T}^{(l)}\mathbf f\|_{L^2(K)}\rightarrow 0.
 \end{equation}

For any given $\epsilon>0$, according to   the proof of Proposition \ref{main11} and (\ref{DD8}), there exist  two product domains $\Omega^{l_0}$ and $\Omega^{l_1}$  with component-wise $C^2$ boundaries and $\Omega^{l_0}\Subset \Omega^{l_1}\Subset\Omega$, and a $\bar\partial$-closed form $\mathbf g \in C^{n-1}(\overline{\Omega^{l_1}})$ such that the followings  hold.
  
  \medskip
  
a)  \begin{equation}\label{dd2}
     \|h\|_{L^2(\Omega\setminus \Omega^{l_0})}\le   {(12C_0)^{-1}}\epsilon.
\end{equation} 
    Here  $C_0$ is the $L^\infty$ bound for both $\mathbf{\tilde T}$ on $\Omega$ and   $\mathbf{\tilde T}^{(l_1)}$ on $\Omega^{l_1}$. 

\medskip

b) 
$
 \|\tilde {\mathbf T}\mathbf f - \tilde {\mathbf T}^{(l_1 )}\mathbf f\|_{L^2(\Omega^{l_0})} \le 6^{-1}{\epsilon}.
$

\medskip

c) $\|\mathbf f-\mathbf g\|_{L^\infty(\Omega^{l_1})} \le (6C_0)^{-1}{\epsilon}$.

\medskip
      
 As  consequences of the construction,  we obtain
\begin{equation}\label{DD11}
\begin{split}
&\langle  \tilde {\bf T}^{(l_1)}\mathbf g, h \rangle_{\Omega^{l_1}} =0;\\
&\| \tilde {\mathbf T}^{(l_1)}\mathbf f \|_{L^2(\Omega^{l_1})}\le  \| \tilde {\mathbf T}^{(l_1)}\mathbf f \|_{L^\infty(\Omega^{l_1})}\le C_0\|f\|_{L^\infty(\Omega)} = C_0, \quad \text {and similarly } \quad \| \tilde {\mathbf T}\mathbf f \|_{L^2(\Omega)}\le C_0.\\
  &\|\tilde {\bf T}^{(l_1)}\mathbf f - \tilde {\bf T}^{(l_1)}\mathbf g\|_{L^2(\Omega^{l_1})}\le  C_0 \| \mathbf f -  \mathbf g\|_{L^\infty(\Omega^{l_1})} \le  6^{-1}{\epsilon} .
\end{split}
\end{equation}
 Here the first identity is due to the facts that $h|_{\Omega^{l_1}}\in A^2(\Omega^{l_1})$ and $\tilde {\bf T}^{(l_1)}$ is the canonical solution operator on the space $C^{n-1}(\overline{\Omega^{l_1}})$. 
Combining (\ref{DD11}) with b)-c) and making use of H\"older inequality, one has 
\begin{align*} 
 | \langle \tilde {\bf T}\mathbf f, h \rangle_{\Omega^{l_1}}|  
 \le &| \langle \tilde {\bf T}\mathbf f - \tilde {\bf T}^{(l_1)}\mathbf f, h \rangle _{\Omega^{l_1}}| +| \langle \tilde {\bf T}^{(l_1)}\mathbf f - \tilde {\bf T}^{(l_1)}\mathbf g, h \rangle_{\Omega^{l_1}}| \\
  \le &\big  \|\tilde {\bf T}\mathbf f-\tilde {\mathbf T}^{(l_1)}\mathbf f  \big  \|_ {L^2 \left (\Omega^{l_0} \right )}+ \big   \|\tilde {\bf T}\mathbf f-\tilde {\mathbf T}^{(l_1)}\mathbf f  \big  \|_{L^2\left (\Omega^{l_1}\setminus \Omega^{l_0} \right)}  \|h\|_{L^2 \left (\Omega^{l_1}\setminus \Omega^{l_0} \right )}+  \big  \|\tilde {\bf T}^{(l_1)}\mathbf f - \tilde {\bf T}^{(l_1)}\mathbf g  \big \|_{L^2 \left (\Omega^{l_1} \right )} \\
   \le & 6^{-1}{\epsilon}+ {(12C_0)^{-1} \epsilon}\left(  \|\tilde{\bf T}  \mathbf f\|_{ L^2(\Omega)} + \|\tilde {\mathbf T}^{(l_1)}\mathbf f\|_{L^2 \left(\Omega^{l_1}\right)}  \right) + 6^{-1}{\epsilon}
   \, =   \, 2^{-1}{\epsilon}. 
\end{align*}
Together with (\ref{dd2}), we finally obtain
\begin{equation*}
    | \langle \tilde {\bf T}\mathbf f, h \rangle_\Omega|\le | \langle \tilde {\bf T}\mathbf f, h \rangle_{\Omega^{(l_1)}}|+  | \langle \tilde {\bf T}\mathbf f, h \rangle_{\Omega\setminus \Omega^{(l_1)}}|\le 2^{-1}{\epsilon}+(12C_0)^{-1}  \epsilon\|\tilde {\bf T}\mathbf f\|_{L^2(\Omega)}\le \epsilon
\end{equation*}
and thus have proved the main theorem completely. 

\end{proof}

\begin{remark}\label{re}
After the first version of this paper was circulated, we noticed that since  $S_j(w_j, \cdot)=0$ on $bD_j$ by \eqref{S0},  the  operator $\tilde {\mathbf T}$ can be further reduced to 
\begin{align*}\label{ns1}
     {\mathbf T^\sharp} \mathbf f: =& \sum_{s=1}^n \sum_{1\le i_1<\cdots<i_s\le n}\sum_{k=1}^s
\int \limits_{D_{i_1}\times \cdots\times D_{i_s}} f_{i_k}(z', w'')\frac{\partial^{s-1} e_w^{k, i_1, \ldots, i_s}(z)}{\partial \bar{z}_{i_1}\cdots\partial \bar z_{i_{k-1}}\partial \bar z_{i_{k+1}}\cdots\partial \bar{z}_{i_s}}.
\end{align*}
Write $R:={D_{i_1}\times \cdots\times D_{i_s}}$ and $\Omega = R\times E$ with an abuse on  the order of slices. For a.e. $w''\in E$, making use of our crucial estimate \eqref{G11} and   the   Young's inequality on $R$, one can get for all $1\le p \le \infty$,
$$  \left\| \int \limits_{R} f_{i_k}(z', w'')\frac{\partial^{s-1} e_w^{k, i_1, \ldots, i_s}(z)}{\partial \bar{z}_{i_1}\cdots\partial \bar z_{i_{k-1}}\partial \bar z_{i_{k+1}}\cdots\partial \bar{z}_{i_s}}\right\|_{L^p(R)}\le C\| \textbf f(\cdot, w'')\|_{L^p(R)}, $$ 
where $C$ is independent of $p$. Thus
\begin{equation*}
    \begin{split}
       \left\| \int \limits_{R} f_{i_k}(z', w'')\frac{\partial^{s-1} e_w^{k, i_1, \ldots, i_s}(z)}{\partial \bar{z}_{i_1}\cdots\partial \bar z_{i_{k-1}}\partial \bar z_{i_{k+1}}\cdots\partial \bar{z}_{i_s}}\right\|_{L^p(\Omega)}  =& \left \| \left\| \int \limits_{R} f_{i_k}(z', w'')\frac{\partial^{s-1} e_w^{k, i_1, \ldots, i_s}(z)}{\partial \bar{z}_{i_1}\cdots\partial \bar z_{i_{k-1}}\partial \bar z_{i_{k+1}}\cdots\partial \bar{z}_{i_s}}\right\|_{L^p(R)}\right\|_{L^p(E)}\\
       \le &C\left\| \|\textbf f(\cdot, w'')\|_{L^p(R)}  \right\|_{L^p(E)}  = C\|\textbf f\|_{L^p( \Omega)}.
    \end{split}
\end{equation*}
Hence $\mathbf T^\sharp $ is well-defined on $L^p(\Omega)$, with $ \|\mathbf T^\sharp \mathbf f\|_{ L^p(\Omega) }\le C \|\mathbf f\|_{ L^p(\Omega) }$ for some $C$  independent of $p$. Proposition \ref{main11} can be readily simplified to prove that $\mathbf T^\sharp $  is a solution operator for   $L^p(\Omega)$ data accordingly.

\medskip
This simplified formulation was also noted in Yuan \cite{Yuan}, who extended these methods to study the $L^p$ estimates. See also Li \cite{Li} for a different approach to Kerzman's problem.

\end{remark}

 \medskip

Lastly, an immediate consequence of Theorem \ref{main} is the following estimate concerning the Bergman projection, which generalizes Corollary \ref{cor=dim1} to product domains.

\begin{cor} \label{cor} Under the same assumptions as in Theorem \ref{main}, 
let $\mathbf P$ be the Bergman projection on $\Omega$. Then there exists a positive constant $C$ depending only on $\Omega$ such that for any $
u\in C  (\bar\Omega)  \cap W^{1, \infty}(\Omega)$,
$$
\|\mathbf P u\|_{ \infty }\le \|u \|_{ \infty } + C \|\bar\partial u\|_{ \infty }.
$$
\end{cor}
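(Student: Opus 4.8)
The plan is to reduce the corollary to Theorem~\ref{main} exactly as Corollary~\ref{cor=dim1} was deduced from Proposition~\ref{dim 1}, i.e.\ via the identity $\mathbf P u = u - \tilde{\mathbf T}(\bar\partial u)$ valid for $u\in L^2(\Omega)\cap\mathrm{Dom}(\bar\partial)$. First I would verify that this identity applies: if $u\in C(\bar\Omega)\cap W^{1,\infty}(\Omega)$, then $u\in L^2(\Omega)$ and $\bar\partial u$ is an $L^\infty$, hence $L^2$, $(0,1)$-form on $\Omega$; moreover $\bar\partial u$ is automatically $\bar\partial$-closed in the sense of distributions. The only subtlety is that Theorem~\ref{main} is stated for $(0,1)$ forms $f$ that are \emph{continuous up to $\bar\Omega$}, whereas $\bar\partial u$ for a merely $W^{1,\infty}$ function need not be continuous. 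I would handle this by an approximation/density argument: mollify $u$ on an exhausting family $\Omega^l\Subset\Omega$ (as in the proof of Proposition~\ref{main11}) to get $u^\epsilon$ smooth, apply $\mathbf P^{(l)}u^\epsilon = u^\epsilon - \tilde{\mathbf T}^{(l)}(\bar\partial u^\epsilon)$ together with the uniform bound of Theorem~\ref{main} on $\Omega^l$ (whose constant depends only on the diameter, hence is uniform in $l$), and then let $\epsilon\to 0$ and $l\to\infty$ using the $L^2$-convergence statements already established in Section~5.

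Concretely, the chain of estimates I would write is: for $u\in C(\bar\Omega)\cap W^{1,\infty}(\Omega)$,
$$
\|\mathbf P u\|_{\infty} = \|u - \tilde{\mathbf T}(\bar\partial u)\|_{\infty} \le \|u\|_{\infty} + \|\tilde{\mathbf T}(\bar\partial u)\|_{\infty} \le \|u\|_{\infty} + C\|\bar\partial u\|_{\infty},
$$
where the last step is Theorem~\ref{main} (more precisely Proposition~\ref{main11}, which already gives $\|\tilde{\mathbf T}\mathbf f\|_\infty\le C\|\mathbf f\|_\infty$ and does not require $\mathbf f$ continuous up to $\bar\Omega$ if one is willing to work on $\Omega^l$ first). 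The middle identity is the substance: one needs $\mathbf P u = u - \tilde{\mathbf T}(\bar\partial u)$, which follows because $\tilde{\mathbf T}(\bar\partial u)$ solves $\bar\partial v = \bar\partial u$ (Proposition~\ref{main11}) and is $L^2$-minimal, so $u - \tilde{\mathbf T}(\bar\partial u) \in A^2(\Omega)$ and is the orthogonal projection of $u$; equivalently, $u - \tilde{\mathbf T}(\bar\partial u)$ is holomorphic and differs from $u$ by something orthogonal to $A^2(\Omega)$.

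The main obstacle is the regularity mismatch: showing $\tilde{\mathbf T}(\bar\partial u)$ is well-defined, solves $\bar\partial v = \bar\partial u$ in the distributional sense, and is $L^2$-orthogonal to $A^2(\Omega)$ when $\bar\partial u$ is only $L^\infty$ (not continuous up to $\bar\Omega$). For the boundedness $\|\tilde{\mathbf T}(\bar\partial u)\|_\infty\le C\|\bar\partial u\|_\infty$ this is not an issue, since the kernel estimate (\ref{G11}) shows the operator is bounded on $L^\infty(\Omega)$ regardless of continuity. For the solution and minimality properties, I would run the exhaustion argument: on each $\Omega^{l}$ the form $\bar\partial u$ (restricted) lies in $L^2_{(0,1)}(\Omega^l)$ and is $\bar\partial$-closed, so H\"ormander's theory gives the canonical solution there, and the identity $\mathbf P^{(l)}u = u - (\text{canonical solution on }\Omega^l)$ holds; passing to the limit via the $L^2$-stability (\ref{DD8}) and the fact that $\Omega^l$ exhausts $\Omega$ yields the identity on $\Omega$. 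Since all constants are controlled by the diameter of $\Omega$ and hence uniform, the final bound passes to the limit cleanly.
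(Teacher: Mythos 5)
The paper gives no explicit proof of Corollary~\ref{cor}; it simply calls it ``an immediate consequence of Theorem~\ref{main}'' in parallel with the one-variable Corollary~\ref{cor=dim1}, which rests on the identity $\mathbf P u = u - \mathbf T\bar\partial u$. Your proposal follows this same route, but you deserve credit for noticing a genuine subtlety that the paper glosses over: for $u\in C(\bar\Omega)\cap W^{1,\infty}(\Omega)$ the data $\bar\partial u$ is merely $L^\infty$, not continuous up to $\bar\Omega$, so Theorem~\ref{main} (and Proposition~\ref{main11}) do not apply directly. This issue is absent in one variable, where $\mathbf T$ is defined by a single area integral and Proposition~\ref{dim 1} already handles $L^p$ data for $p>1$; in $n\geq 2$ variables the operator $\tilde{\mathbf T}$ contains boundary integrals over $bD_{t_r}$, so it genuinely requires boundary traces of $f$. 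Your plan --- mollify $u$ to $u^\epsilon$, work on the exhaustion $\Omega^l$, apply the bound there, and pass to the limit using the $L^2$-stability from Section~5 together with $\|\bar\partial u^\epsilon\|_\infty\le\|\bar\partial u\|_\infty$ --- is the right way to bridge this, and the clean target to aim for is $\|u-\mathbf P u\|_\infty\le C\|\bar\partial u\|_\infty$, since $u-\mathbf P u$ is always the $L^2$-minimal solution regardless of how $\tilde{\mathbf T}\bar\partial u$ is interpreted.

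Two points in your write-up should be tightened. First, the sentence ``the kernel estimate~\eqref{G11} shows the operator is bounded on $L^\infty(\Omega)$ regardless of continuity'' is not correct as stated: $\tilde{\mathbf T}\mathbf f$ is defined by integrals over mixed products such as $D_{j_1}\times\cdots\times bD_{t_1}\times\cdots\times D_{i_k}$, and the boundary integrals are not meaningful for a generic $L^\infty$ form; the estimate~\eqref{G11} only controls the kernel once the boundary values of $f$ make sense. Your approximation argument sidesteps this since $\bar\partial u^\epsilon$ is smooth up to $\bar\Omega^l$, so you should present the boundedness as arising from the limit, not directly from~\eqref{G11}. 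Second, the claim that the constant in Theorem~\ref{main} ``depends only on the diameter'' is imprecise: the constant also depends on the exterior-ball radius in Kerzman's Green's-function estimates. For the exhausting family built in the proof of Proposition~\ref{main11} this is harmless, since the $C^2$-diffeomorphisms $h^l\to\mathrm{id}$ keep the geometry uniformly controlled, but the statement should be phrased in terms of uniform control of the $\Omega^l$ rather than the diameter alone.
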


\bigskip
\noindent{\LARGE{\bf Appendices}}
 
\begin{appendices} 
  
\section{Proof of formula (\ref{L=H+I})} \label{AppA}

 We shall follow Barletta-Landucci and  verify (\ref{L=H+I}), correcting a minor error in \cite{BL91} (where an additional negative sign was mistakenly introduced while transforming an integral from being  in terms of $d\zeta$ to $ds_\zeta$). 
 
 Fix $z\ne w\in D$, and let $\epsilon$ be defined in (\ref{epsilon}), $\sigma < \min\{\delta(z), \epsilon\}$. Then $G(z, \cdot)$ and $1/(\cdot-w)$ are both harmonic on $D_{\epsilon, \sigma}: = D\setminus (B(z, \sigma)\cup B(w, \epsilon))$.   From the definition of $L$ in (\ref{L def}) and the fact that $G=0$ on $bD$, a direct computation    yields
 \begin{equation*}
\begin{split}
-2\pi i L(w, z)=& \int_{ b D} \frac{1}{\zeta-w} \frac{\partial G(z, \zeta)}{\partial \vec{n}_\zeta }ds_\zeta\\  
 =&\int_{ b D} \frac{1}{\zeta-w} \frac{\partial G(z, \zeta)}{\partial \vec{n}_\zeta } - G(z, \zeta) \frac{\partial }{\partial \vec{n}_\zeta }\left(\frac{1}{\zeta-w}\right)ds_\zeta\\
  =& \int_{bB(z, \sigma)}  \frac{1}{\zeta-w} \frac{\partial G(z, \zeta)}{\partial \vec{n}_\zeta } - G(z, \zeta) \frac{\partial }{\partial \vec{n}_\zeta }\left(\frac{1}{\zeta-w}\right)ds_\zeta+\\
  &+\int_{bB(w, \epsilon)}  \frac{1}{\zeta-w} \frac{\partial G(z, \zeta)}{\partial \vec{n}_\zeta } - G(z, \zeta) \frac{\partial }{\partial \vec{n}_\zeta }\left(\frac{1}{\zeta-w}\right)ds_\zeta =: I_1+I_2.
\end{split}
\end{equation*}
Here the third equality is due to Green's second identity, in view of the harmonicity of  $\frac{1}{\cdot-w} $ and $G(z, \cdot)$  in $D_{\epsilon, \sigma}$.

Since $\rho(z, \cdot): = G(z, \cdot) + \frac{1}{2\pi}\log|z -\cdot|$ and $\frac{1}{\cdot-w}$ are both harmonic in $B(z, \sigma)$, we  similarly have 
 $$ \int_{bB(z, \sigma)}  \frac{1}{\zeta-w} \frac{\partial \rho(z, \zeta)}{\partial \vec{n}_\zeta } - \rho (z, \zeta) \frac{\partial }{\partial \vec{n}_\zeta }\left(\frac{1}{\zeta-w}\right)ds_\zeta =0.$$
 Hence
 \begin{equation*}
 \begin{split}
          I_1 = &-\frac{1}{2\pi}\left(\int_{bB(z, \sigma)}  \frac{1}{\zeta-w} \frac{\partial \log|z-\zeta|}{\partial \vec{n}_\zeta }ds_\zeta - \int_{bB(z, \sigma)}\log|z- \zeta| \frac{\partial }{\partial \vec{n}_\zeta }\left(\frac{1}{\zeta-w}\right)ds_\zeta\right)\\
          =&-\frac{1}{2\pi}\left(\int_0^{2\pi} \frac{\sigma}{z+\sigma e^{it}-w} \frac{\partial \log\sigma}{\partial \sigma }dt - \int_{bB(z, \sigma)}\sigma\log\sigma \frac{\partial }{\partial \sigma }\left(\frac{1}{z+\sigma e^{it}-w}\right)dt\right)\\
          =& -\frac{1}{2\pi}\left(\int_0^{2\pi} \frac{1}{z+\sigma e^{it}-w} dt - \sigma\log\sigma \int_{bB(z, \sigma)} \frac{\partial }{\partial \sigma }\left(\frac{1}{z+\sigma e^{it}-w}\right)dt\right).
 \end{split}
 \end{equation*}
 Letting $\sigma$ go to $0$, we have
 $$I_1 \rightarrow -\frac{1}{z-w}.  $$

 Finally we estimate  $I_2$ on $bB(w, \epsilon)$. Note that
 $$ \int_{bB(w, \epsilon)}  \frac{1}{\zeta-w} \frac{\partial G(z, \zeta)}{\partial \vec{n}_\zeta } ds_\zeta = \frac{1}{\epsilon^2}\int_{bB(w, \epsilon)}  (\bar\zeta-\bar w) \frac{\partial G(z, \zeta)}{\partial \vec{n}_\zeta } ds_\zeta.$$
 Because $\bar\cdot-\bar w  $ and $G(z, \cdot)$ are both harmonic in $B(w, \epsilon)$, we use Green's second identity again to get 
 $$ \int_{bB(w, \epsilon)}  \frac{1}{\zeta-w} \frac{\partial G(z, \zeta)}{\partial \vec{n}_\zeta } ds_\zeta = \frac{1}{\epsilon^2}\int_{bB(w, \epsilon)}  G(z, \zeta)  \frac{\partial (\bar\zeta-\bar w)}{\partial \vec{n}_\zeta } ds_\zeta$$
Therefore,
 \begin{equation*}
     \begin{split}
         I_2 = & \frac{1}{\epsilon}\int_0^{2\pi}G(z, w+\epsilon e^{it})   \frac{\partial (\epsilon e^{-it})}{\partial \epsilon }dt -\epsilon \int_0^{2\pi} G(z, w+\epsilon e^{it}) \frac{\partial }{\partial \epsilon}\left(\frac{1}{\epsilon e^{it}}\right)dt\\
         =&\frac{2}{\epsilon}\int_0^{2\pi} G(z, w+\epsilon e^{it}) e^{-it}dt. 
     \end{split}
 \end{equation*}
Altogether, we have
$$L(w, z) = \frac{1}{2\pi i(z-w)} -\frac{1}{\epsilon\pi i}\int_0^{2\pi} G(z, w+\epsilon e^{it}) e^{-it}dt. $$

\section{Estimates for Green's function} \label{AppB}

We 
provide proofs to Lemma \ref{Kerzman-Green}  based on \cite{Ker76} and Lemma \ref{Kerzman-Green2} in this section. Kerzman's result  on Green's function serves as a foundational estimate, from  which upper bounds for  the canonical solution kernel of $\bar\partial$ and its derivatives are obtained in Section 2 and 3, respectively.

On a bounded domain $D \subset \mathbb{C}$ with $C^{2}$ boundary, Green's function $G$ satisfies the following properties:

\vspace{0.15cm}

i) $ G(z,w)=G(w, z) $ are positive $C^2$ functions for $(z, w) \in D \times D \setminus \{z=w\}$. 

\vspace{0.15cm}

ii) For each fixed $w\in D$,  $ G(z, w)$ is harmonic in $z \in D \setminus \{w\}$, and $G(z, w) = 0$ when $ z \in bD$.
\vspace{0.15cm}

iii) For each fixed $w\in D$,  $ G(z,w)+(2\pi)^{-1}\log|z-w|$ is harmonic in $z\in D$.
 
\vspace{0.15cm}

Fix $w\in D$ and consider $\rho(z, w):= G(z, w) + (2\pi)^{-1}  \log |z-w|$.  
When $z \in bD$, $\rho(z, w)= (2\pi)^{-1}  \log |z-w| \le  (2\pi)^{-1}  \log d$, where $d$ is the diameter of $D$. By the Maximum Principle, we know that for $(z, w) \in D\times D$ off the diagonal,
$$
G(z, w) +\frac{1}{2\pi}\log |z-w|\le \frac{1}{2\pi}\log d,
$$
which yields that
\begin{equation}\label{G<}
0<G(z, w) \le  \frac{1}{2\pi}\log\frac{d}{|z-w|}.
\end{equation}
\medskip

\begin{proof}[Proof of Lemma \ref{Kerzman-Green}]

Since $G$ is symmetric in $z$ and $w$ and $G(z, \cdot)=0$ on $bD$, we shall prove the first part of (\ref{G}) only, for any fixed pole $z\in D$. By rotation and translation if necessary, $w$ is on the negative $y$-axis and $0\in bD$ such that $\delta(w) = |w|$. Since $D$ has $C^2$ boundary, it satisfies the exterior ball condition with a uniform radius $r\leq d$.
If $\delta(w) \ge (8d)^{-1}r|z-w|$, then (\ref{G}) follows from \eqref{G<}. So we assume that $\delta(w) \le (8d)^{-1}r|z-w|$.   
Let $\alpha :=   \frac{r|z-w|}{4d}< 4^{-1}r $ and define the region $R:= B(0, \alpha)\setminus B(\alpha i, \alpha)$. On $R$, consider  the harmonic function $$E(\zeta): =\Real    \{ (\zeta -\alpha i)^{-1}\zeta \}.$$ Then 
$E\ge {c_0}$ on $bB(0, \alpha)\setminus \overline{B(\alpha i, \alpha)}$ for some universal constant
 $$
 {c_0} :=\inf_{\zeta\in bB(0, 1)\setminus \overline{B( i, 1)}}\Real    \frac{\zeta}{\zeta -i}>0,
 $$
 which is particularly independent of $\alpha$. Moreover, at $\zeta=w$,
\begin{equation}\label{E2}
    E(w)\le \frac{|w|}{\alpha}= \frac{ 4d\delta(w)}{  r|z-w|}.
\end{equation}
Consider the sub-region $\tilde R: = R\cap D$, where the function  $E(\zeta)\log\frac{2d}{|z-w|}$ is harmonic in $\zeta$.

\vspace{0.15cm}

We assert that
\begin{equation}\label{claim}
E(\zeta) \log \frac{2d}{|z-w|}  \ge  2\pi {c_0}G(z, \zeta),
\end{equation}
for $\zeta\in b\tilde R\equiv   (R\cap bD) \cup (bR\cap D)$.

\vspace{0.15cm}

If $\zeta\in R\cap bD$, then $E(\zeta)\log\frac{2d}{|z-w|}\ge 0=2\pi {c_0}G(z, \zeta)$.

\vspace{0.15cm}

If $\zeta\in bR\cap D$,
$$
|z-\zeta|\ge |z-w|-|w-\zeta|\ge |z-w|-2\alpha\ge \frac{|z-w|}{2},
$$
and thus by \eqref{G<} again,
$$
G(z, \zeta) \leq  \frac{1}{2\pi}\log \frac{2d}{|z-w|}.
$$
On the other hand, since $bB(\alpha i, \alpha)\cap  D =\emptyset$ by the choice of $\alpha$, we have $\zeta \in bB(0, \alpha)\cap D\subset bB(0, \alpha)\setminus \overline{B(\alpha i, \alpha)}$ where $E\ge {c_0}$. Therefore, for $\zeta \in  bR\cap D$, we get \eqref{claim} and thus have verified the above assertion.

By the Maximum Principle, \eqref{claim} holds true for all $\zeta \in \tilde R$. In particular at $\zeta=w$, one has
\begin{equation}\label{E}
    E(w)\log\frac{2d}{|z- w|}\ge 2\pi {c_0}G(z, w).  \end{equation}
Now, the first part of \eqref{G} follows from (\ref{E}) and (\ref{E2}), by choosing $C$ to be a constant dependent only on $d, c_0$ and $r$. 

\vspace{0.15cm}
 
 For \eqref{g2}, it follows directly from \eqref{G} when $\delta(z)\ge (8d)^{-1}r|z-w|$. Otherwise, we argue in the same way as we prove \eqref{G}, and use \eqref{G} in the place of \eqref{G<} to show that the assertion
$$  
C  \frac{E(\zeta)\delta(z) }{|z-w|}\log\frac{2d}{|z-w|}\ge {c_0} G(z, \zeta)
$$
holds true for $\zeta\in b\tilde R$. By the Maximum Principle we further know that the above inequality holds true on all $\tilde R$, and particularly at $\zeta=w$. Using (\ref{E2}) again, we get \eqref{g2}.

\end{proof}

\begin{proof}[Proof of Lemma \ref{Kerzman-Green2}]
The symmetry of $G$ implies that the gradient in the first part of \eqref{Gd} could be taken with respect to either $z$ or $w$. Fix  $w\in D$, and for  $z_0 \in D \setminus \{w\}$ let $\epsilon =2^{-1} \min\{ {\delta(z_0)},  {|z_0-w|} \}>0$. Then $$|z-w|\ge |z_0-w|-|z_0-z|\ge \frac{|z_0-w|}{2}, \quad \text{and} \quad \delta(z)\le \delta(z_0)+|z_0-z| \le 2\delta(z_0)$$
   for all $z\in B(z_0, \epsilon)$. Since $G(\cdot, w)$ is harmonic on $B(z_0, \epsilon)$, by the Mean Value theorem of harmonic functions (cf. \cite[(2.31) on p. 22]{GT}),  it follows that
  \begin{equation*}
     |\nabla_z G(w, z_0)|\le \frac{2}{\epsilon}\sup_{z\in B(z_0, \epsilon)}|G(z, w)|.
 \end{equation*}
 When $\epsilon = 2^{-1} {\delta(z_0)} $, we use \eqref {G} to get
$$
|\nabla_z G(w, z_0)|\le \frac{4}{\delta(z_0)} \frac{4C \delta(z_0)}{|z_0-w|}\log \frac{2d}{|z_0-w|};
$$
when $\epsilon = 2^{-1} {|z_0-w|} $, we use \eqref{G<} to get
 $$
 |\nabla_z G(w, z_0)|\le \frac{4}{|z_0-w|} \frac{1}{2\pi}\log \frac{2d}{|z_0-w|}.
 $$
Then, the first part of \eqref {Gd} for $z\in D\setminus\{w\}$ is proved; 
  for $z\in bD$ it follows from the second part of \eqref{G} and the fact that $G( w, \cdot)$  vanishes at $bD$.
Similarly, to obtain the second part of \eqref {Gd}, we  use \eqref{g2} and \eqref{G} in the places of \eqref {G} and \eqref{G<}, respectively.

\end{proof}

\end{appendices}

\subsection*{Funding}

{\fontsize{11.5}{10}\selectfont

The research of the first author was supported by AMS-Simons travel grant. The research of the third author was partially supported by NSF DMS-1501024.
}

\bibliographystyle{alphaspecial}

\fontsize{11}{9}\selectfont

\vspace{0.5cm}

\noindent xindong.math@outlook.com, 

\vspace{0.2 cm}

\noindent Department of Mathematics, University of Connecticut, Stamford, CT 06901-2315, USA

\vspace{0.4cm}

\noindent pan@pfw.edu,

\vspace{0.2 cm}

\noindent Department of Mathematical Sciences, Purdue University Fort Wayne, Fort Wayne, IN 46805-1499, USA

\vspace{0.4cm}

\noindent zhangyu@pfw.edu,

\vspace{0.2 cm}

\noindent Department of Mathematical Sciences, Purdue University Fort Wayne, Fort Wayne, IN 46805-1499, USA
 
\end{document}